\newif\ifJOURNAL
\newif\ifarXiv
\newif\ifWP
\newif\ifFULL
\newif\ifLATIN
\newif\ifnotJOURNAL	
\newif\ifnotarXiv	
\newif\ifTR		
\newif\ifnotTR
\newif\ifnotFULL	
\newif\ifnotLATIN	
  \newcommand{\Kabanov}{kabanov/etal:1977}
  \newcommand{\VovkDAN}{vovk:1987criterion-full}
  \newcommand{\VovkTVP}{vovk:1987law-full}
  \newcommand{\VovkPPI}{vovk:1989full}
  \newcommand{\AmariNagaoka}{amari/nagaoka:2000}
  \newcommand{\Kabanov}{kabanov/etal:1977latin}
  \newcommand{\VovkDAN}{vovk:1987criterion}
  \newcommand{\VovkTVP}{vovk:1987law}
  \newcommand{\VovkPPI}{vovk:1989}
  \newcommand{\AmariNagaoka}{amari/nagaoka:2000latin}
\journalname{Annals of the Institute of Statistical Mathematics}
\newcommand{\Extra}[1]{}
\newcommand{\Extra}[1]{}
\renewcommand{\Extra}[1]{#1}
\renewcommand{\Extra}[1]{\blue{#1}}
\newcommand{\blue}[1]{\textcolor{blue}{#1}}
\newcommand{\bluebegin}{\begingroup\color{blue}}
\newcommand{\blueend}{\endgroup}
  \renewcommand{\I}{^{\mathrm{I}}}
  \newcommand{\I}{^{\mathrm{I}}}
\newcommand{\II}{^{\mathrm{II}}}
\newcommand{\st}{\mathrel{\!|\!}}
\newcommand{\givn}{\mathrel{|}}
\newcommand{\contig}{\mathrel{\lhd}}
\newcommand{\separ}{\mathrel{\bigtriangleup}}
\newcommand{\para}{\mathbin{\parallel}}
\newcommand{\dd}{\mathrm{d}}		
\newcommand{\K}{\mathcal{K}}		
\newcommand{\PPP}{\mathcal{P}}		
\newcommand{\FFF}{\mathcal{F}}
\newcommand{\SSS}{\mathcal{S}}
\DeclareMathOperator{\III}{\mathbb{I}}		
\newcommand{\bbbp}{\mathbb{P}}		
\DeclareMathOperator{\Prob}{\bbbp}
\newcommand{\bbbr}{\mathbb{R}}		
  \newtheorem{lemma}{Lemma}
  \newtheorem{proposition}{Proposition}
  \newtheorem{corollary}{Corollary}
  \newtheorem{theorem}{Theorem}
  \newenvironment{proof}
    {\trivlist\item[\hskip\labelsep\textbf{Proof}]}
    {\endtrivlist}
\newenvironment{Proof}[1]
  {\trivlist\item[\hskip\labelsep\textbf{Proof #1:}]}
  {\endtrivlist}
\newcommand{\boxforqed}{\rule{.3em}{1.5ex}}
\newcommand{\qedtext}{\unskip\nobreak\hfil
  \penalty50\hskip1em\null\nobreak\hfil\boxforqed
  \parfillskip=0pt\finalhyphendemerits=0\endgraf}
\newenvironment{remark*}
  {\trivlist\item[\hskip\labelsep{\bfseries Remark}]\relax}
  {\endtrivlist}
\newlength{\IndentI}
\newlength{\IndentII}
\newlength{\IndentIII}
\newlength{\IndentIV}
\newlength{\WidthI}
\newlength{\WidthII}
\newlength{\WidthIII}
\newlength{\WidthIV}
\title{Merging of opinions in game-theoretic probability}
\author{Vladimir Vovk\\
\texttt{vovk{\rm@}cs.rhul.ac.uk}\\
\texttt{http://vovk.net}}
\title{Merging of opinions in game-theoretic probability}
\author{Vladimir Vovk}
\begin{document}

\ifJOURNAL
\title{Merging of opinions in game-theoretic probability%
  \thanks{This work was partially supported by MRC (grant G0301107)
  and the Cyprus Research Promotion Foundation.}}


\author{Vladimir Vovk}


\institute{Vladimir Vovk \at
  Computer Learning Research Centre,
  Department of Computer Science,
  Royal Holloway, University of London,
  Egham, Surrey TW20 0EX, UK\\
  \email{vovk@cs.rhul.ac.uk}}

\date{Received: date / Revised: date}
\fi

\maketitle

\begin{abstract}
  This paper gives game-theoretic versions of several results on ``merging of opinions''
  obtained in measure-theoretic probability and algorithmic randomness theory.
  An advantage of the game-theoretic versions over the measure-theoretic results
  is that they are pointwise,
  their advantage over the algorithmic randomness results
  is that they are non-asymptotic,
  but the most important advantage over both is that they are very constructive,
  giving explicit and efficient strategies for players
  in a game of prediction.
  \ifJOURNAL
    \keywords{Game-theoretic probability \and Jeffreys's law}
  \fi
\end{abstract}

\section{Introduction}
\label{sec:introduction}

\ifTR
  The idea that the predictions made by two forecasters
  will become closer with increasing information
  goes back at least to de Finetti
  (\citeyear{definetti:1937}, Chapter V);
  see also Savage \citeyear{savage:1954}, \S3.6.
  De Finetti's assumption was that both forecasters
  compute their predictions from exchangeable probability measures
  that are not too close to a power probability measure;
  in detail he considered only binary prediction from this point of view.
  Later it became quite popular in Bayesian statistics
  since it renders an element of objectivity
  to the subjective probability measures.

  A similar phenomenon is also known in economics
  under the name of ``Hotelling's law'' (after \citealt{hotelling:1929})
  or the ``principle of minimum differentiation''.
\fi

The first general mathematical result about convergence of predictions
made by successful forecasters appears to be
Blackwell and Dubins's paper (\citeyear{blackwell/dubins:1962}).
Blackwell and Dubins's result was about infinite-horizon forecasting,
whereas in this paper we will be interested in one-step ahead forecasting.
\ifTR
  (In game-theoretic probability,
  Blackwell and Dubins's setting is much less natural,
  since there are no stochastic assumptions imposed
  on what happens outside the protocol.)
\fi
An important paper in this direction was \citet{\Kabanov}
(see also \citealt{shiryaev:1996}, Sect.~VII.6,
\citealt{jacod/shiryaev:2003-local}, and \citealt{greenwood/shiryaev:1985}),
which generalized earlier results by \citet{kakutani:1948}
and by \citet{hajek:1958} and \citet{feldman:1958}.
The main disadvantage of the approach of these papers
is that it is ``bulk'',
stated in terms of absolute continuity and singularity
of probability measures.

One way to obtain pointwise results about merging of opinions
is to use the algorithmic theory of randomness.
The first result of this kind was proved by \citet{dawid:1985}
(who refers to it as ``Jeffreys's law'' in \citealt{dawid:1984} and \citealt{dawid:2004}).
Dawid's result was for his version of von Mises's notion of randomness
based on subsequence selection rules,
and so his notion of merging was rather weak.
\ifFULL\bluebegin
  Von Mises's notion of randomness \citep{mises:1919,mises:1928}
  was corrected by \citet{ville:1939}.
\blueend\fi
A result based on the standard notion of randomness
was obtained by \citet{\VovkDAN}
and later extended by \citet{fujiwara:2007}.

The algorithmic randomness approach has two major weaknesses.
First, since it is based on the notion of computability,
it imposes heavy restrictions on the types of measurable spaces
it can deal with
(typically one considers just finite or countable observation spaces $\Omega$).
Second, it is asymptotic in the sense that it never provides us
with explicit inequalities.
The von Mises-type notion of randomness
\Extra{is hopelessly stuck at infinity
(to use Shafer's \citep{vovk:1993logic} expression):
it}
does not assert anything at all about finite sequences of observations.
But even the modern definitions
(such as those due to Martin-L\"of, Levin, and Schnorr)
are based on a notion
(the deficiency of randomness)
that is defined only to within an additive constant
and so can only be applied to finite sequences \emph{en masse}.

The game-theoretic approach to probability was suggested in \citet{vovk:1993logic}
and developed in, e.g.,
\citet{dawid/vovk:1999},
\citet{shafer/vovk:2001},
\citet{kumon/etal:2007}.
This approach makes it possible to use all flexibility
of the algorithmic randomness approach
without paying its high price;
in particular, all our statements are either non-asymptotic
or can be stated in a non-asymptotic manner.

\ifFULL\bluebegin
  This paper: no local absolute continuity.
  \citet{jacod/shiryaev:2003-local} also do not have it;
  according to Kabanov (Mathematical Reviews),
  it was \citet{pukelsheim:1986}
  who disposed of the assumption of local absolute continuity.
\blueend\fi

\section{Merging of opinions as criterion of success}
\label{sec:criterion}

Let $\Omega$ be a measurable space
and $\PPP(\Omega)$ stand for the set of all probability measures on $\Omega$;
elements of $\Omega$ will be called \emph{observations}
and measurable subsets of $\Omega$ will be called \emph{local events}.
Suppose we have two forecasters at each step issuing probability forecasts
for the next observation $\omega_n\in\Omega$ to be chosen by reality.
The game-theoretic process of testing the forecasters' predictions can be represented
in the following form.

\medskip

\noindent
\textsc{Competitive testing protocol}

\noindent
\textbf{Players:} Reality, Forecaster I, Sceptic I, Forecaster II, Sceptic II

\noindent
\textbf{Protocol:}

\parshape=11
\IndentI   \WidthI
\IndentI   \WidthI
\IndentI   \WidthI
\IndentII  \WidthII
\IndentII  \WidthII
\IndentII  \WidthII
\IndentII  \WidthII
\IndentII  \WidthII
\IndentII  \WidthII
\IndentII  \WidthII
\IndentI   \WidthI
\noindent
$\K\I_0 := 1$.\\
$\K\II_0 := 1$.\\
FOR $n=1,2,\dots$:\\
  Forecaster I announces $P\I_n\in\PPP(\Omega)$.\\
  Forecaster II announces $P\II_n\in\PPP(\Omega)$.\\
  Sceptic II announces $f\II_n:\Omega\to[0,\infty]$
	such that $\int f\II_n \dd P\II_n=1$.\\
  Sceptic I announces $f\I_n:\Omega\to[0,\infty]$
	such that $\int f\I_n \dd P\I_n=1$.\\
  Reality announces $\omega_n\in\Omega$.\\
  $\K\I_n := \K\I_{n-1} f\I_n (\omega_n)$.\\
  $\K\II_n := \K\II_{n-1} f\II_n (\omega_n)$.\\
END FOR

\medskip

\noindent
The predictions output by Forecaster I are tested by Sceptic I,
and the predictions output by Forecaster II are tested by Sceptic II.
The Sceptics' success in detecting inadequacy of the Forecasters' predictions
is measured by their \emph{capital}, $\K^{\textrm{I}}_n$ and $\K^{\textrm{II}}_n$,
respectively.
The initial capital is $1$ and the game is fair
from the point of view of the Forecasters.
The value of $\K\I_n$
(resp.\ $\K\II_n$) is interpreted as the degree
to which Sceptic I (resp.\ Sceptic II)
managed to discredit Forecaster I's (resp.\ Forecaster II's) predictions.
The requirement that the Sceptics
choose functions $f\I_n$ and $f\II_n$ taking nonnegative values
reflects the restriction that they should never risk bankruptcy
by gambling more than their current capital.

In our protocol we allow infinite values for the functions chosen by the Sceptics
and, therefore, infinite values for their capital.
We will use the convention $0\infty:=0$.

Let $\alpha\notin\{-1,1\}$.
For two probability measures $P\I$ and $P\II$ on $\Omega$
we define the \emph{$\alpha$-divergence} between them as
\begin{equation}\label{eq:divergence}
  D^{(\alpha)}
  \left(
    P\I\para P\II
  \right)
  :=
  \frac{4}{1-\alpha^2}
  \left(
    1
    -
    \int_{\Omega}
      (\beta\I(\omega))^{\frac{1-\alpha}{2}}
      (\beta\II(\omega))^{\frac{1+\alpha}{2}}
    Q(\dd\omega)
  \right)
\end{equation}
(with the same convention $0\infty:=0$)
where $Q$ is any probability measure on $\Omega$
such that $P\I\ll Q$ and $P\II\ll Q$,
$\beta\I$ is any version of the density of $P\I$ w.r.\ to $Q$
and $\beta\II$ is any version of the density of $P\II$ w.r.\ to $Q$.
(For example, one can set $Q:=(P\I+P\II)/2$;
it is clear that the value of the integral
does not depend on the choice of $Q$, $\beta\I$ and $\beta\II$.)
The expression (\ref{eq:divergence}) is always nonnegative:
see, e.g., \citet{\AmariNagaoka}.
An important special case is the \emph{Hellinger distance},
corresponding to $\alpha=0$ and also given by the formula
\begin{equation*}
  D^{(0)}
  \left(
    P\I\para P\II
  \right)
  =
  2
  \int_{\Omega}
    \left(
      \sqrt{\beta\I(\omega)}
      -
      \sqrt{\beta\II(\omega)}
    \right)^2
  Q(\dd\omega).
\end{equation*}
(Sometimes ``Hellinger distance'' refers to $\frac12D^{0}(P\I,P\II)$,
as in \citealt{\VovkDAN},
or to $\sqrt{D^{0}(P\I,P\II)}$.)

For simplicity,
in the main part of this section
we will only consider the case where Forecaster II is ``timid''
on the given play of the game,
in the sense that he does not deviate too much from Forecaster I.
Formally,
Forecaster II is \emph{timid}
if, for all $n$, $P\I_n\ll P\II_n$
(intuitively,
if Forecaster II never declares a local event null
unless it is already null according to Forecaster I).
It should be remembered that the assumption of timidity
is always imposed on the realized play of the game
rather than on Forecaster II's strategy
(in general, Forecaster II is not assumed to follow a strategy).

The following asymptotic result will be proved in Sect.~\ref{sec:criterion-proof}
(its counterpart in the algorithmic theory of randomness
has been recently proved by \citealt{fujiwara:2007}, Theorem 3;
in the special case $\alpha=0$
it was obtained in \citealt{\VovkDAN}).
We will say that Sceptic I (resp.\ Sceptic II)
\emph{becomes infinitely rich} if $\lim_{n\to\infty}\K\I_n=\infty$
(resp.\ $\lim_{n\to\infty}\K\II_n=\infty$).
\renewcommand{\thetheorem}{1a}
\begin{theorem}\label{thm:criterion}
  Let $\alpha\in(-1,1)$.
  In the competitive testing protocol:
  \begin{enumerate}
  \item\label{it:c1}
    The Sceptics have a joint strategy guaranteeing
    that at least one of them will become infinitely rich
    if
    \begin{equation}\label{eq:far}
      \sum_{n=1}^{\infty}
      D^{(\alpha)}
      \left(
        P\I_n\para P\II_n
      \right)
      =
      \infty
    \end{equation}
    and Forecaster II is timid.
  \item\label{it:c2}
    Sceptic I has a strategy guaranteeing
    that he will become infinitely rich
    if
    \begin{equation}\label{eq:close}
      \sum_{n=1}^{\infty}
      D^{(\alpha)}
      \left(
        P\I_n \para P\II_n
      \right)
      <
      \infty,
    \end{equation}
    Sceptic II becomes infinitely rich,
    and Forecaster II is timid.
  \end{enumerate}
\end{theorem}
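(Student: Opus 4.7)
The plan is to first reduce to a common reference measure: timidity gives $P\I_n\ll P\II_n$, so setting $Q_n:=P\II_n$ makes $\beta\II_n\equiv 1$ and $\beta_n:=\beta\I_n=\dd P\I_n/\dd P\II_n$ a genuine density. Then $\rho_n:=\int\beta_n^{(1-\alpha)/2}\,\dd P\II_n$ satisfies $D^{(\alpha)}(P\I_n\para P\II_n)=(4/(1-\alpha^2))(1-\rho_n)$ and, by Jensen applied to the concave map $t\mapsto t^{(1-\alpha)/2}$, $\rho_n\in(0,1]$; hence (\ref{eq:far}) is equivalent to $\prod\rho_n=0$ while (\ref{eq:close}) yields $\prod\rho_n\ge c>0$. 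Throughout, I write $L_n:=\prod_{k=1}^n\beta_k(\omega_k)$ for the likelihood-ratio process.

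For item (\ref{it:c1}) I would have Sceptic I play the fixed strategy $f\I_n(\omega):=\beta_n(\omega)^{-(1+\alpha)/2}/\rho_n$ and Sceptic II play $f\II_n(\omega):=\beta_n(\omega)^{(1-\alpha)/2}/\rho_n$; both moves are valid because each normalising integral reduces to $\int\beta_n^{(1-\alpha)/2}\,\dd P\II_n/\rho_n=1$. Their capitals are $\K\I_n=L_n^{-(1+\alpha)/2}/\prod_{k\le n}\rho_k$ and $\K\II_n=L_n^{(1-\alpha)/2}/\prod_{k\le n}\rho_k$, which lead to the identity
\[
(\K\I_n)^{(1-\alpha)/2}(\K\II_n)^{(1+\alpha)/2}=\prod_{k=1}^n\rho_k^{-1};
\]
under (\ref{eq:far}) the right-hand side diverges, so the left does too, and at least one of the capitals must.

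For item (\ref{it:c2}) I would have Sceptic I play a convex combination (via splitting the initial unit capital) of the Part (\ref{it:c1}) strategy above---whose capital is now bounded below by $c^{-1}L_n^{-(1+\alpha)/2}$ because of (\ref{eq:close})---and the \emph{tracking} strategy $f\I_n(\omega):=f\II_n(\omega)/\beta_n(\omega)$, valid because $\int(f\II_n/\beta_n)\,\dd P\I_n=\int f\II_n\,\dd P\II_n=1$ and carrying capital $\K\II_n/L_n$. Weighted AM--GM applied to the two summands yields a lower bound of the form $C\cdot(\K\II_n)^{(1-\alpha)/2}L_n^{-\gamma}$ for an explicit $\gamma>0$.

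The main obstacle is to conclude that Sceptic I's combined capital diverges on \emph{every} play with $\K\II_n\to\infty$---most tightly in the regime where $L_n$ grows at the same rate as $\K\II_n$ (as happens, for instance, if Sceptic II plays $f\II_n=\beta_n$, so that $\K\II_n=L_n$, and Reality makes $L_n\to\infty$). To cover this case I would adjoin a third ingredient, the \emph{renormalised-mimic} strategy $f\I_n:=f\II_n/c_n$ with $c_n:=\int f\II_n\,\dd P\I_n=\int f\II_n\beta_n\,\dd P\II_n$, whose capital is $\K\II_n/\prod c_k$. A Hölder estimate using conjugate exponents $2/(1-\alpha)$ and $2/(1+\alpha)$, together with $\int f\II_n\,\dd P\II_n=1$ and (\ref{eq:close}), should control $\prod c_k$ well enough to force at least one of the three components in Sceptic I's mixture to blow up whenever $\K\II_n$ does; pinning down this Hölder bound---and verifying that the three regimes (tracking succeeds, Part~(\ref{it:c1}) strategy succeeds, mimic succeeds) exhaust the possibilities---is the step I expect to be most delicate.
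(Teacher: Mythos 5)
Part~\ref{it:c1} of your proof is essentially the paper's: after normalizing by $Q_n:=P\II_n$ (legitimate under timidity), your strategies $f\I_n$ and $f\II_n$ are exactly the strategies (\ref{eq:sceptic-1}) and (\ref{eq:sceptic-2}) from the paper's proof of Theorem~\ref{thm:non-asymptotic}, Part~\ref{it:na1}, and your identity $(\K\I_N)^{(1-\alpha)/2}(\K\II_N)^{(1+\alpha)/2}=\prod_{k\le N}\rho_k^{-1}$ is a rescaling of (\ref{eq:small-alpha}). That part is correct and is the paper's route.

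Part~\ref{it:c2} has a genuine gap, and it is exactly where you flagged it. The AM--GM combination of the Part~\ref{it:c1} strategy (capital $\ge c^{-1}L_n^{-(1+\alpha)/2}$) with the tracking strategy (capital $\K\II_n/L_n$) cannot eliminate $L_n$ when $\alpha\in(-1,1)$: the exponent $-(1+\alpha)/2$ is \emph{negative}, so the Part~\ref{it:c1} capital only protects against $L_n\to 0$, not $L_n\to\infty$. (Cancellation does work when $\alpha<-1$; that is precisely why the paper's Theorem~\ref{thm:non-asymptotic}, Part~\ref{it:na2}, gives a quantitative bound \emph{only} for $\alpha<-1$, and why the present Part~\ref{it:c2} is proved by a separate method.) The renormalized-mimic component does not close the gap: $c_n=\int f\II_n\beta_n\,\dd P\II_n$ is not controlled by $\rho_n$ and $\int f\II_n\,\dd P\II_n=1$ alone, because no higher moment of $f\II_n$ or $\beta_n$ is available for H\"older -- Sceptic~II may concentrate $f\II_n$ on a small set where $\beta_n$ is very large, making $c_n$ arbitrarily large while $\rho_n$ stays arbitrarily close to $1$. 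Consequently the regime $L_n\to\infty$, $\K\II_n=O(L_n)$, $\prod_k c_k\to\infty$ is not excluded by any of your three ingredients, and this is precisely the regime carrying the difficulty.

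The paper's proof of Part~\ref{it:c2} uses a genuinely different, non-quantitative device. Sceptic~I mixes in additional betting components which, whenever $\sum_n D^{(\alpha)}(P\I_n\para P\II_n)<\infty$, \emph{force} (in the game-theoretic sense) that $\sup_N\sum_{n\le N}U\bigl(\ln(\beta\I_n(\omega_n)/\beta\II_n(\omega_n))\bigr)<\infty$ and that $\beta\I_n(\omega_n)\le e\beta\II_n(\omega_n)$ from some $n$ on; these are Lemmas~\ref{lem:submartingale}, \ref{lem:borel-cantelli}, \ref{lem:inequality1} and \ref{lem:inequality2}. Together they yield $\sup_N\ln L_N<\infty$, after which tracking alone suffices. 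The essential point is that $L_N$ being bounded above is \emph{not} a consequence of $\sum D^{(\alpha)}<\infty$ on every play; it has to be enforced by a separate martingale strategy, and the paper explicitly remarks that this very step is what prevents its proof of Part~\ref{it:c2} from being made quantitative. Your proposal, which seeks the result through a pointwise inequality between explicit test martingales, is pursuing exactly the strengthening the paper explains it does not obtain.
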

\renewcommand{\thetheorem}{\arabic{theorem}}
Sceptic I (resp.\ Sceptic II) becoming infinitely rich in this theorem
can also be understood as $\limsup_{n\to\infty}\K\I_n=\infty$
(resp.\ $\limsup_{n\to\infty}\K\II_n=\infty$);
in the next subsection we will see
that this understanding of ``infinitely rich'' leads to an equivalent statement.

\begin{remark*}
  Fujiwara (\citeyear{fujiwara:2007}, Section 3.1) gives a simple example
  showing that Theorem \ref{thm:criterion} (namely, its Part \ref{it:c1})
  cannot be extended to the case $\left|\alpha\right|\ge1$.
  \ifFULL\bluebegin
    Part \ref{it:c2} can be extended by Theorem \ref{thm:non-asymptotic},
    Part \ref{it:na2}.
  \blueend\fi
\end{remark*}

Before discussing the intuition behind Theorem \ref{thm:criterion},
it will be convenient to introduce some terminology (in part informal).
We will say that Forecaster I (resp.\ Forecaster II)
is \emph{successful}
(for a particular play of the game)
if Sceptic I (resp.\ Sceptic II)
does not become infinitely rich.
We say that a Forecaster is \emph{reliable}
if we believe, even before the start of the game,
that he will be successful.
For example,
the Forecaster might know the true stochastic mechanism producing the observations,
or his predictions may be computed from a well-tested theory.

Part \ref{it:c1} of the theorem says that either (\ref{eq:close}) holds
or at least one of the Sceptics becomes infinitely rich.
Therefore, if both Forecasters are reliable,
we expect their predictions to be close in the sense of (\ref{eq:close}).

Suppose we only know that Forecaster I is reliable;
for concreteness,
let us impose on Reality the requirement that $\K\I_n$ should stay bounded.
If the Sceptics invest a fraction (arbitrarily small) of their initial capital
in strategies whose existence is guaranteed in Theorem \ref{thm:criterion},
we will have
\begin{equation*}
  \left(
    \limsup_{n\to\infty}
    \K\II_n
    <
    \infty
  \right)
  \Longleftrightarrow
  \left(
    \sum_{n=1}^{\infty}
    D^{(\alpha)}
    \left(
      P\I_n\para P\II_n
    \right)
    <
    \infty
  \right).
\end{equation*}
(In the context of algorithmic randomness theory
this equivalence is called a criterion of randomness
in \citealt{\VovkDAN} and \citealt{fujiwara:2007}.)

In the following sections we will see several elaborations
of Theorem \ref{thm:criterion}.
One useful interpretation of our results
is where Forecaster I computes his predictions
using some well-tested stochastic theory,
and we believe him to be reliable.
Forecaster II represents an alternative way of forecasting.
We will be interested in the relation
between the deviation of Forecaster II's predictions from Forecaster I's predictions
and the degree of the former's success,
as measured by Sceptic II's capital.

\subsection{Equivalence of the two senses of becoming infinitely rich}

Let us first simplify the competitive testing protocol:

\medskip

\noindent
\textsc{Testing protocol}

\noindent
\textbf{Players:} Reality, Forecaster, Sceptic

\noindent
\textbf{Protocol:}

\parshape=7
\IndentI   \WidthI
\IndentI   \WidthI
\IndentII  \WidthII
\IndentII  \WidthII
\IndentII  \WidthII
\IndentII  \WidthII
\IndentI   \WidthI
\noindent
$\K_0 := 1$.\\
FOR $n=1,2,\dots$:\\
  Forecaster announces $P_n\in\PPP(\Omega)$.\\
  Sceptic announces $f_n:\Omega\to[0,\infty]$
	such that $\int f_n \dd P_n=1$.\\
  Reality announces $\omega_n\in\Omega$.\\
  $\K_n := \K_{n-1} f_n (\omega_n)$.\\
END FOR

\medskip

\noindent
Now we have only one Forecaster and one Sceptic.
We again refer to $\K_n$ as Sceptic's \emph{capital} at time $n$.

\ifFULL\bluebegin
  In Glenn's and I previous papers and the 2001 book
  we sometimes say that a strategy for Sceptic is \emph{prudent}
  if it does not risk bankruptcy
  (i.e., guarantees $\K_n\ge0$ for all $n$).
  In this paper this notion is vacuous:
  each strategy is automatically prudent.
\blueend\fi
The proof of the following lemma will give an efficient procedure
transforming a strategy for Sceptic into another strategy for Sceptic
such that the second strategy makes Sceptic infinitely rich in the sense $\lim\K_n=\infty$
whenever the first strategy makes him infinitely rich
in the sense $\limsup\K_n=\infty$.
If $\SSS$ is a strategy for Sceptic,
$P_1\omega_1P_2\omega_2\ldots$ is a sequence of moves by Forecaster and Reality,
and $n\in\{1,2,\ldots\}$,
let $\K_n(\SSS,P_1\omega_1P_2\omega_2\ldots)$ be Sceptic's capital
achieved when playing $\SSS$ against Forecaster and Reality
playing $P_1\omega_1P_2\omega_2\ldots$\,.
\begin{lemma}\label{lem:equivalence}
  For any strategy $\SSS$ for Sceptic
  there exists another strategy $\SSS'$ for Sceptic
  such that,
  for all $P_1\omega_1P_2\omega_2\ldots$,
  \begin{equation}\label{eq:lem-equivalence}
    \limsup_{n\to\infty}
    \K_n(\SSS,P_1\omega_1P_2\omega_2\ldots)
    =
    \infty
    \Longrightarrow
    \lim_{n\to\infty}
    \K_n(\SSS',P_1\omega_1P_2\omega_2\ldots)
    =
    \infty.
  \end{equation}
\end{lemma}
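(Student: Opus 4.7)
The plan is to use the classical \emph{locking in} device. I would distribute Sceptic's unit of initial capital across countably many auxiliary strategies, each a modified copy of $\SSS$ that is frozen the first time its own capital reaches a prescribed high level. Transient peaks of $\K_n(\SSS)$ are then converted into \emph{permanent} gains of $\SSS'$, upgrading $\limsup=\infty$ to $\lim=\infty$.

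In detail, for each $k \geq 1$ set $\tau_k := \inf\{n : \K_n(\SSS,P_1\omega_1P_2\omega_2\ldots) \geq 2^k\}$ (with $\inf\emptyset := \infty$), and let $\SSS_k$ be the strategy of initial capital $2^{-k}$ that plays the move $\SSS$ would play on the realized history for $n \leq \tau_k$ and plays the neutral move $f_n \equiv 1$ for $n > \tau_k$. Writing $\K^{(k)}_n$ for $\SSS_k$'s capital, one has $\K^{(k)}_n = 2^{-k}\K_n(\SSS,\cdot)$ for $n \leq \tau_k$, and $\K^{(k)}_n \equiv \K^{(k)}_{\tau_k} \geq 1$ for all $n \geq \tau_k$ whenever $\tau_k < \infty$. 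I would then amalgamate the $\SSS_k$ into a single strategy $\SSS'$ with total initial capital $\sum_k 2^{-k} = 1$: setting $W_{n-1} := \sum_k \K^{(k)}_{n-1}$ (and defining $f_n := 1$ in the degenerate case $W_{n-1}=0$, which can arise only on plays where the conclusion of the lemma is vacuous), let $\SSS'$ play
\[
f_n := W_{n-1}^{-1} \sum_{k=1}^{\infty} \K^{(k)}_{n-1} f^{(k)}_n,
\]
where $f^{(k)}_n$ is $\SSS_k$'s move on the realized history. Since each $f^{(k)}_n \geq 0$ satisfies $\int f^{(k)}_n\,\dd P_n = 1$ and the weights $\K^{(k)}_{n-1}/W_{n-1}$ sum to one, $f_n$ is a legitimate move for Sceptic, and a one-line induction gives $\K_n(\SSS') = \sum_k \K^{(k)}_n$.

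To conclude: on any play with $\limsup_n \K_n(\SSS,P_1\omega_1P_2\omega_2\ldots) = \infty$, every $\tau_k$ is finite, hence $\K^{(k)}_n \geq 1$ for all $n \geq \tau_k$. Given any $K$, for all $n \geq \max(\tau_1,\ldots,\tau_K)$ one gets $\K_n(\SSS') \geq \sum_{k=1}^{K} \K^{(k)}_n \geq K$, so $\lim_n \K_n(\SSS') = \infty$, as required. The only mildly delicate point is justifying the countable convex combination defining $f_n$, but nonnegativity of each $f^{(k)}_n$ together with summability of the weights makes the exchange of sum and integral immediate by Tonelli, so I do not anticipate a real obstacle.
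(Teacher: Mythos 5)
Your proof is correct, but it takes a genuinely different route from the paper's. The paper follows an argument attributed to Sasha Shen: it runs a \emph{single} copy of $\SSS$ and, each time the ``active'' part of the capital crosses $2$, permanently sets one unit aside and keeps playing $\SSS$ with what remains; the iterative set-aside count grows without bound whenever $\limsup_n\K_n(\SSS)=\infty$. You instead run a \emph{countable family} of stopped copies $\SSS_k$ in parallel, each seeded with capital $2^{-k}$ and frozen at the hitting time $\tau_k$ of level $2^k$, recombined by the capital-weighted mixture $f_n=W_{n-1}^{-1}\sum_k\K^{(k)}_{n-1}f^{(k)}_n$. Your version is conceptually crisper --- each copy either locks in at least $1$ or is negligible, so $\K_n(\SSS')\ge K$ for $n\ge\max(\tau_1,\dots,\tau_K)$ falls out immediately --- whereas the paper's version is more capital-efficient, reusing a single stream rather than pre-allocating disjoint geometric pots. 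Your handling of the degenerate case $W_{n-1}=0$ is right: it forces $\K_{n-1}(\SSS)=0$ (else $\K^{(1)}_{n-1}>0$), so $\limsup_n\K_n(\SSS)=0$ and the implication is vacuous. One small point that both proofs leave implicit: the protocol allows $f_n$ and hence $\K_n$ to become infinite, so the weight $\K^{(k)}_{n-1}/W_{n-1}$ needs a convention when $W_{n-1}=\infty$; this is harmless (capital is already permanently infinite, modulo the $0\cdot\infty$ convention) but worth noting if one were being fully pedantic.
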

\begin{proof}
  This proof will use the argument from \citet{vovk/shafer:2005RSS}
  (the end of the proof of Theorem 3),
  which we learned from Sasha Shen;
  for another argument,
  see \citet{shafer/vovk:2001}, Lemma 3.1.

  Let $\SSS$ be any strategy for Sceptic.
  The transformed strategy $\SSS'$ works as follows.
  Start playing $\SSS$ until $\K_n$ exceeds $2$
  (play $\SSS$ forever if $\K_n$ never exceeds $2$).
  As soon as this happens,
  set $1$ aside and continue playing $\SSS$ with the initial active capital $\K_n-1$
  until the active capital exceeds $2$.
  As soon as this happens,
  set another $1$ aside (decreasing the active capital by this amount)
  and continue playing $\SSS$ until the active capital exceeds $2$,
  etc.

  Formally, if at the beginning of some step $n$
  the capital $\K_{n-1}$ attained by $\SSS'$ includes active capital $\K^{\textrm{act}}_{n-1}$,
  with $\K_{n-1}-\K^{\textrm{act}}_{n-1}$ set aside earlier,
  and $\SSS$ recommends move $f_n$,
  the move recommended by $\SSS'$ is
  \begin{equation*}
    f'_n
    :=
    \frac{\K^{\textrm{act}}_{n-1}}{\K_{n-1}}
    f_n
    +
    \frac{\K_{n-1}-\K^{\textrm{act}}_{n-1}}{\K_{n-1}},
  \end{equation*}
  so that Sceptic's capital becomes
  $\K^{\textrm{act}}_{n-1} f_n(\omega_n) + (\K_{n-1}-\K^{\textrm{act}}_{n-1})$
  at the end of step $n$.
  Now (\ref{eq:lem-equivalence}) follows from the fact that Sceptic
  will set aside another $1$ infinitely often
  when the antecedent of (\ref{eq:lem-equivalence}) is satisfied.
  \ifnotJOURNAL
    \qedtext
  \fi
  \ifJOURNAL
    \qed
  \fi
\end{proof}

\subsection{General theorem about merging of opinions}

In this subsection we drop the assumption
that Forecaster II is timid;
unfortunately,
this will make the statement of the theorem more complicated
(it is easy to see that Theorem \ref{thm:criterion} itself
becomes false if the assumption of Sceptic II's timidity is removed).
Even the competitive testing protocol has to be modified.
Let us say that $(E\I,E\II)$,
where $E\I$ and $E\II$ are local events,
is an \emph{exceptional pair} for $P\I,P\II\in\PPP(\Omega)$
if $P\I(E\I)=0$, $P\II(E\II)=0$, and
\begin{equation*}
  P\I(E)=0
  \Longleftrightarrow
  P\II(E)=0
\end{equation*}
for all $E\subseteq\Omega\setminus(E\I\cup E\II)$.

\medskip

\noindent
\textsc{Modified competitive testing protocol}

\noindent
\textbf{Players:} Reality, Forecaster I, Sceptic I, Forecaster II, Sceptic II

\noindent
\textbf{Protocol:}

\parshape=12
\IndentI   \WidthI
\IndentI   \WidthI
\IndentI   \WidthI
\IndentII  \WidthII
\IndentII  \WidthII
\IndentII  \WidthII
\IndentII  \WidthII
\IndentII  \WidthII
\IndentII  \WidthII
\IndentII  \WidthII
\IndentII  \WidthII
\IndentI   \WidthI
\noindent
$\K\I_0 := 1$.\\
$\K\II_0 := 1$.\\
FOR $n=1,2,\dots$:\\
  Forecaster I announces $P\I_n\in\PPP(\Omega)$.\\
  Forecaster II announces $P\II_n\in\PPP(\Omega)$.\\
  Reality announces an exceptional pair $(E\I_n,E\II_n)$ for $P\I_n,P\II_n$.\\
  Sceptic II announces $f\II_n:\Omega\to[0,\infty]$
	such that $\int f\II_n \dd P\II_n=1$.\\
  Sceptic I announces $f\I_n:\Omega\to[0,\infty]$
	such that $\int f\I_n \dd P\I_n=1$.\\
  Reality announces $\omega_n\in\Omega$.\\
  $\K\I_n := \K\I_{n-1} f\I_n (\omega_n)$.\\
  $\K\II_n := \K\II_{n-1} f\II_n (\omega_n)$.\\
END FOR

\medskip

\noindent
The identity of the player who announces an exceptional pair
does not matter
as long as it is not one of the Sceptics.
One way to chose $(E\I,E\II)$ is to choose $\beta\I$ and $\beta\II$ first
and then set $E\I:=\{\beta\I=0\}$ and $E\II:=\{\beta\II=0\}$.

Without the condition of timidity of Forecaster II,
the condition of agreement (\ref{eq:close}) between the Forecasters
has to be replaced by
\begin{equation}\label{eq:general-close}
  \sum_{n=1}^{\infty}
  D^{(\alpha)}
  \left(
    P\I_n \para P\II_n
  \right)
  <
  \infty
  \quad\text{and}\quad
  \forall n:
  \omega_n \notin E\I_n\cup E\II_n.
\end{equation}

\renewcommand{\thetheorem}{1b}
\begin{theorem}\label{thm:general-criterion}
  Let $\alpha\in(-1,1)$.
  In the modified competitive testing protocol:
  \begin{enumerate}
  \item
    The Sceptics have a joint strategy guaranteeing
    that at least one of them will become infinitely rich
    if (\ref{eq:general-close}) is violated.
  \item
    Sceptic I has a strategy guaranteeing
    that he will become infinitely rich
    if (\ref{eq:general-close}) holds
    and Sceptic II becomes infinitely rich.
  \end{enumerate}
\end{theorem}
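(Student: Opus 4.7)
The plan is to reduce Theorem \ref{thm:general-criterion} to Theorem \ref{thm:criterion} by using the exceptional pairs $(E\I_n,E\II_n)$ to bypass the failure of timidity. Each Sceptic will run two subaccounts in parallel: a small ``blowup'' account that becomes infinitely rich as soon as $\omega_n$ enters its own Forecaster's exceptional set, and a main account implementing the natural extension of the Theorem \ref{thm:criterion} strategies to arbitrary $P\I_n, P\II_n$.

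The blowup account of Sceptic I plays $f\I_n := \infty\cdot\III_{E\I_n} + \III_{\Omega\setminus E\I_n}$ at each step, and Sceptic II plays the symmetric move involving $E\II_n$. Because $P\I_n(E\I_n)=P\II_n(E\II_n)=0$, these are legal moves (integral $=1$ under $0\cdot\infty=0$), and the occurrence of $\omega_n\in E\I_n$ (resp.\ $\omega_n\in E\II_n$) at any step immediately drives Sceptic I's (resp.\ Sceptic II's) blowup account to $\infty$ and keeps it there forever. For the main accounts I fix versions of the densities satisfying $\{\beta\I_n=0\}=E\I_n$ and $\{\beta\II_n=0\}=E\II_n$; since the strategies produced by the proof of Theorem \ref{thm:criterion} are built from ratios of these densities on the set where both are positive, they extend to all of $\Omega$ by setting $f\I_n:=0$ on $E\I_n$ and $f\II_n:=0$ on $E\II_n$. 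Each exceptional set is null under the corresponding measure, so the normalization integrals are unchanged; moreover, whenever $\omega_n\notin E\I_n\cup E\II_n$ the capital-update factor agrees exactly with its value in the timid protocol.

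The conclusion will then follow by case analysis. For Part~1, failure of (\ref{eq:general-close}) means either some $\omega_n\in E\I_n\cup E\II_n$---and the matching blowup account already makes one of the Sceptics infinitely rich---or $\omega_n\notin E\I_n\cup E\II_n$ for every $n$ together with divergence of the series in (\ref{eq:general-close}); in the latter case the main accounts trace exactly the same trajectories as in the timid protocol, so Theorem \ref{thm:criterion}(\ref{it:c1}) applies. For Part~2, the hypothesis (\ref{eq:general-close}) already forces $\omega_n\notin E\I_n\cup E\II_n$ throughout, so Sceptic I's main account runs identically to the timid setting and Theorem \ref{thm:criterion}(\ref{it:c2}) delivers his strategy. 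The main obstacle I expect is the verification underlying the second paragraph: checking that extending the Theorem \ref{thm:criterion} strategies by $0$ off the equivalence set really yields legal moves with unchanged normalization, and that every pathwise identity exploited in the proof of Theorem \ref{thm:criterion} carries over on the good event $\{\omega_n\notin E\I_n\cup E\II_n$ for all $n\}$; once that is established, the rest is essentially bookkeeping.
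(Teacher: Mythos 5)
The two-subaccount idea is the right intuition, and your Part~1 argument coincides with the paper's: set the Sceptics' moves to $\infty$ on their own exceptional sets and run the Theorem~\ref{thm:non-asymptotic} strategies elsewhere.

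Your Part~2 argument, however, has two genuine problems. First, the reduction is in the wrong direction. In this paper, Theorem~\ref{thm:criterion}(\ref{it:c2}) is \emph{deduced from} Theorem~\ref{thm:general-criterion}(\ref{it:c2}): under timidity $P\I_n(E\I_n\cup E\II_n)=0$, so Sceptic~I can mix the 1b-strategy with a blow-up on $E\I_n\cup E\II_n$ and get the 1a-statement. There is no independent proof of Theorem~\ref{thm:criterion}(\ref{it:c2}) to lean on, so invoking it to prove Theorem~\ref{thm:general-criterion}(\ref{it:c2}) is circular. What you are calling ``the main obstacle''---checking that every pathwise identity carries over on the good event---is in fact the whole martingale-lemma argument (Lemmas~\ref{lem:submartingale}, \ref{lem:borel-cantelli}, \ref{lem:inequality1}, \ref{lem:inequality2}, the forcing chain (\ref{eq:forcing1})--(\ref{eq:forcing2})), not bookkeeping. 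The paper runs that argument directly, first assuming $\beta\I_n,\beta\II_n>0$ and $E\I_n=E\II_n=\emptyset$, and then handles the general case by allowing the observation space to shrink to $\Omega_n:=\Omega\setminus(E\I_n\cup E\II_n)$ at each step.

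Second, the normalization claim is incomplete in exactly the place where timidity fails. You address $\int f\I_n\,\dd P\I_n$ being unchanged when $f\I_n$ is set to $0$ on $E\I_n$ (which is $P\I_n$-null), but you do not address what happens on $E\II_n\setminus E\I_n$. In the non-timid case $P\I_n(E\II_n)>0$ is possible, and the ``natural extension'' (e.g. $f\I_n:=(\beta\II_n/\beta\I_n)f\II_n$) vanishes there, which gives
\[
  \int f\I_n\,\dd P\I_n
  =
  1-\int_{E\I_n\setminus E\II_n}f\II_n\,\dd P\II_n,
\]
and the right-hand side can be strictly less than $1$ because $P\II_n(E\I_n)>0$ is also possible. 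So the extended move is not legal as stated. This is repairable---compensate by placing the missing mass on $E\II_n\setminus E\I_n$ (which has positive $P\I_n$-measure precisely when timidity fails), or simply add a nonnegative constant; either way the capital only increases. But the repair has to be done, and it is not a consequence of ``each exceptional set is null under the corresponding measure.''
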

\renewcommand{\thetheorem}{\arabic{theorem}}
\addtocounter{theorem}{-1}

\section{Non-asymptotic version}
\label{sec:non-asymptotic}

In many cases it will be more convenient to use the following modification
of the $\alpha$-divergence (\ref{eq:divergence})
between two probability measures $P\I$ and $P\II$ on $\Omega$:
\begin{equation}\label{eq:log-divergence}
  D^{[\alpha]}
  \left(
    P\I\para P\II
  \right)
  :=
  \frac{4}{\alpha^2-1}
  \ln
  \int_{\Omega}
    (\beta\I(\omega))^{\frac{1-\alpha}{2}}
    (\beta\II(\omega))^{\frac{1+\alpha}{2}}
  Q(\dd\omega),
\end{equation}
where the constant $\alpha$ is different from $-1$ and $1$;
$
  D^{[\alpha]}
  \left(
    P\I\para P\II
  \right)
$
will also be referred to as $\alpha$-divergence.
The expression (\ref{eq:log-divergence}) is nonnegative:
this follows from the fact that (\ref{eq:divergence}) is nonnegative.
When $P\I$ and $P\II$ are close to each other
(in the sense that
\begin{equation}\label{eq:Hellinger-integral-1}
  \int_{\Omega}
    (\beta\I(\omega))^{\frac{1-\alpha}{2}}
    (\beta\II(\omega))^{\frac{1+\alpha}{2}}
  Q(\dd\omega),
\end{equation}
called the \emph{Hellinger integral of order $\frac{1-\alpha}{2}$}%
\ifFULL\bluebegin, \citet{jacod/shiryaev:2003-local}, IV.1.1a\blueend\fi,
is close to $1$),
the ratio of (\ref{eq:log-divergence}) to (\ref{eq:divergence})
is close to $1$.
In any case,
the inequality $\ln x\le x-1$ (for $x\ge0$) implies that
\begin{equation}\label{eq:inequalities}
  \begin{aligned}
    \left|\alpha\right|
    <
    1
    &\Longrightarrow
    D^{(\alpha)}
    \left(
      P\I\para P\II
    \right)
    \le
    D^{[\alpha]}
    \left(
      P\I\para P\II
    \right)\\
    \left|\alpha\right|
    >
    1
    &\Longrightarrow
    D^{(\alpha)}
    \left(
      P\I\para P\II
    \right)
    \ge
    D^{[\alpha]}
    \left(
      P\I\para P\II
    \right).
  \end{aligned}
\end{equation}
In principle, it is possible that
$
  D^{(\alpha)}
  \left(
    P\I\para P\II
  \right)
  =
  \infty
$:
this happens when the Hellinger integral in (\ref{eq:log-divergence}) is zero
(for $\left|\alpha\right|<1$)
or infinity
(for $\left|\alpha\right|>1$).
\begin{remark*}\label{p:renyi}
The version (\ref{eq:log-divergence}) coincides,
to within a constant factor and reparameterization,
with R\'enyi's (\citeyear{renyi:1961}) information gain,
which in our context can be written as
\begin{equation}\label{eq:renyi-divergence}
  D_{\alpha}
  \left(
    P\I,
    P\II
  \right)
  :=
  \frac{1}{\alpha-1}
  \log
  \int_{\Omega}
    (\beta\I(\omega))^{\alpha}
    (\beta\II(\omega))^{1-\alpha}
  Q(\dd\omega),
  \quad
  \alpha>0,
  \enspace
  \alpha\ne1,
\end{equation}
$\log$ standing for the binary logarithm.
\ifFULL\bluebegin
  The connection is:
  \begin{equation*}
    D^{[\alpha]}
    \left(
      P\I\para P\II
    \right)
    =
    \frac{2}{1-\alpha}
    D_{\frac{1-\alpha}{2}}
    \left(
      P\I,
      P\II
    \right).
  \end{equation*}
\blueend\fi
However, we will never use the definition (\ref{eq:renyi-divergence})
in this paper;
an important advantage of (\ref{eq:log-divergence})
is that, for any constants $\alpha_1$ and $\alpha_2$,
the ratio of the divergences $D^{[\alpha_1]}(P\I\para P\II)$ and $D^{[\alpha_2]}(P\I\para P\II)$
(as well as the divergences $D^{(\alpha_1)}(P\I\para P\II)$ and $D^{(\alpha_2)}(P\I\para P\II)$)
is close to $1$
for $P\I$ and $P\II$ that are close to each other
(in the sense of $\beta\I/\beta\II\approx1$).
\ifFULL\bluebegin
  Indeed,
  introducing the notation $\beta\II=\beta\I(1+\epsilon)$,
  $\epsilon\approx0$,
  and noticing that $\int\epsilon \dd P\I=0$,
  we obtain:
  \begin{align*}
    D^{(\alpha)}
    \left(
      P\I\para P\II
    \right)
    &=
    \frac{4}{1-\alpha^2}
    \left(
      1
      -
      \int_{\Omega}
        (\beta\I(\omega))^{\frac{1-\alpha}{2}}
        (\beta\I(\omega))^{\frac{1+\alpha}{2}}
        (1+\epsilon(\omega))^{\frac{1+\alpha}{2}}
      Q(\dd\omega)
    \right)\\
    &=
    \frac{4}{1-\alpha^2}
    \left(
      1
      -
      \int_{\Omega}
        (1+\epsilon)^{\frac{1+\alpha}{2}}
      \dd P\I
    \right)\\
    &\approx
    \frac{4}{1-\alpha^2}
    \left(
      1
      -
      \int_{\Omega}
        1
        +
        \frac{1+\alpha}{2}
        \epsilon
        +
        \frac12
        \frac{1+\alpha}{2}
        \left(
          \frac{1+\alpha}{2} - 1
        \right)
        \epsilon^2
      \dd P\I
    \right)\\
    &=
    \frac12
    \int_{\Omega}
      \epsilon^2
    \dd P\I,
  \end{align*}
  which does not depend on $\alpha$.
\blueend\fi
\end{remark*}

The main result of this section is the following non-asymptotic version
of Theorems \ref{thm:criterion} and \ref{thm:general-criterion}.
\begin{theorem}\label{thm:non-asymptotic}
  In the competitive testing protocol:
  \begin{enumerate}
  \item\label{it:na1}
    For any $\alpha\in\bbbr$, $\alpha\notin\{-1,1\}$,
    the Sceptics have a joint strategy guaranteeing that, for all $N$,
    \begin{equation}\label{eq:small-alpha}
      \frac{2}{1+\alpha}
      \ln\K\I_N
      +
      \frac{2}{1-\alpha}
      \ln\K\II_N
      =
      \sum_{n=1}^{N}
      D^{[\alpha]}
      \left(
        P\I_n\para P\II_n
      \right).
    \end{equation}
  \item\label{it:na2}
    For any $\alpha\in(-\infty,-1)$,
    Sceptic I has a strategy guaranteeing,
    for all $N$,
    \begin{equation}\label{eq:big-alpha}
      \frac{2}{1+\alpha}
      \ln\K\I_N
      +
      \frac{2}{1-\alpha}
      \ln\K\II_N
      \le
      \sum_{n=1}^{N}
      D^{[\alpha]}
      \left(
        P\I_n\para P\II_n
      \right).
    \end{equation}
  \end{enumerate}
\end{theorem}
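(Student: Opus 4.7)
The plan is to set $H_n := \int_{\Omega}(\beta\I_n)^{(1-\alpha)/2}(\beta\II_n)^{(1+\alpha)/2}\,Q(\dd\omega)$, so that $D^{[\alpha]}(P\I_n\para P\II_n) = \frac{4}{\alpha^2-1}\ln H_n$, and to abbreviate $a := \frac{2}{1+\alpha}$, $b := \frac{2}{1-\alpha}$. These exponents are Hölder conjugates, $1/a + 1/b = 1$, with $ab = \frac{4}{1-\alpha^2}$, so that $-ab\ln H_n = D^{[\alpha]}(P\I_n\para P\II_n)$. Both assertions then take the unified form $a\ln\K\I_N + b\ln\K\II_N \le -ab\sum_{n=1}^N \ln H_n$, with equality required in Part \ref{it:na1}. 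Throughout I use the convention $0\cdot\infty := 0$ to absorb the various degenerate cases where $\beta\I_n$, $\beta\II_n$, or $f\II_n$ may vanish.

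For Part \ref{it:na1}, I would have the two Sceptics play the tilted-likelihood strategies
\[
  f\I_n := H_n^{-1}\bigl(\beta\II_n/\beta\I_n\bigr)^{(1+\alpha)/2},
  \qquad
  f\II_n := H_n^{-1}\bigl(\beta\I_n/\beta\II_n\bigr)^{(1-\alpha)/2}.
\]
Admissibility follows from $\int f\I_n\,\dd P\I_n = H_n^{-1}\int(\beta\I_n)^{(1-\alpha)/2}(\beta\II_n)^{(1+\alpha)/2}\,Q(\dd\omega) = 1$, and symmetrically for $f\II_n$. Taking logarithms at $\omega_n$, the combination $a\ln f\I_n(\omega_n) + b\ln f\II_n(\omega_n)$ contributes $-(a+b)\ln H_n$ from the normalization; the cross terms in $\ln\beta\I_n(\omega_n)$ and $\ln\beta\II_n(\omega_n)$ cancel because $a\cdot\frac{1+\alpha}{2} = b\cdot\frac{1-\alpha}{2} = 1$ with opposite signs. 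Since $a+b = ab$, summing over $n$ produces the required identity.

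For Part \ref{it:na2}, Sceptic I moves after Sceptic II but cannot dictate $f\II_n$, so he must respond. Define $c := -\frac{1+\alpha}{1-\alpha}$; the hypothesis $\alpha \in (-\infty,-1)$ places $c$ in $(0,1)$, and a direct computation gives $ac = -b$. The plan is for Sceptic I to play any $f\I_n$ dominating $g_n := H_n^{-b}(f\II_n)^c$ pointwise, for instance $f\I_n := g_n + \rho_n$ with a nonnegative constant $\rho_n$ chosen so that $\int f\I_n\,\dd P\I_n = 1$. Since $a < 0$, pointwise dominance forces
\[
  (f\I_n(\omega_n))^a (f\II_n(\omega_n))^b \le g_n(\omega_n)^a(f\II_n(\omega_n))^b = H_n^{-ab}(f\II_n(\omega_n))^{ac+b} = H_n^{-ab},
\]
and multiplying over $n$ and taking logarithms yields (\ref{eq:big-alpha}). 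The remaining task is to verify that the chosen $f\I_n$ is admissible, i.e., that $\int g_n\,\dd P\I_n \le 1$. Writing $(f\II_n)^c\,\beta\I_n = (f\II_n\beta\II_n)^c \cdot (\beta\II_n^{-c}\beta\I_n)$ and applying Hölder's inequality with conjugate exponents $1/c$ and $1/(1-c) = (1-\alpha)/2$ gives
\[
  \int(f\II_n)^c\,\dd P\I_n \le \Bigl(\int f\II_n\,\dd P\II_n\Bigr)^{c} \Bigl(\int (\beta\I_n)^{(1-\alpha)/2}(\beta\II_n)^{(1+\alpha)/2}\,Q(\dd\omega)\Bigr)^{1-c} = 1\cdot H_n^{b},
\]
so that $\int g_n\,\dd P\I_n \le 1$, as required.

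The main obstacle I anticipate is identifying the correct response exponent $c = -(1+\alpha)/(1-\alpha)$: it is forced by the identity $ac = -b$ (so that the $f\II_n$ factors cancel in the step-by-step bound), and the sign condition $\alpha < -1$ is precisely what places $c$ in $(0,1)$, which is exactly the range in which ordinary Hölder applies to the factorization above. Everything else is essentially bookkeeping; the degenerate cases involving vanishing densities are handled by the $0\cdot\infty := 0$ convention without any conceptual change.
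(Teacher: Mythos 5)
Part~\ref{it:na1} uses exactly the paper's strategies (the tilted-likelihood strategies (\ref{eq:sceptic-1})--(\ref{eq:sceptic-2})), and the cancellation you observe is the same one the paper uses; no comment needed there beyond a small caveat below.

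Part~\ref{it:na2} is where you diverge genuinely. The paper has Sceptic~I invest a fraction $c\in(0,1)$ of his initial capital in (\ref{eq:sceptic-1}) and the rest in the likelihood-ratio strategy $f\I_n=(\beta\II_n/\beta\I_n)f\II_n$, then minimizes the resulting capital over the unknown likelihood-ratio product $x=\prod(\beta\II_n/\beta\I_n)$ and finally optimizes over $c$, finding $c=2/(1-\alpha)$. You instead build a single per-step multiplicative strategy $f\I_n=H_n^{-b}(f\II_n)^c+\rho_n$ with $c=-(1+\alpha)/(1-\alpha)$, verify admissibility by H\"older with conjugate exponents $1/c$ and $1/(1-c)=(1-\alpha)/2$, and then observe that because $a<0$ the pointwise domination $f\I_n\ge H_n^{-b}(f\II_n)^c$ forces $(f\I_n)^a(f\II_n)^b\le H_n^{-ab}$ at every step, from which (\ref{eq:big-alpha}) follows by multiplying over $n$. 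Your calculation is correct: $ac=-b$ does hold, $c\in(0,1)$ for $\alpha<-1$, and the H\"older step is applied correctly with the $P\I_n$-density factored as $(f\II_n\beta\II_n)^c\cdot(\beta\II_n^{-c}\beta\I_n)$. What your approach buys is a cleaner, truly per-step inequality and a strategy expressible as a single multiplier function of $f\II_n$, without any minimization over a free parameter; the paper's approach buys a more transparent game-theoretic interpretation (Sceptic~I hedges by splitting capital between two simple strategies) and, as a byproduct, exhibits the value of the likelihood-ratio product at which the bound is tight.

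One genuine (though easily repaired) gap: you claim the degenerate cases are absorbed by the convention $0\cdot\infty:=0$, but this is not enough for Part~\ref{it:na1}. If $H_n=0$ (which, for $\left|\alpha\right|<1$, means $P\I_n\perp P\II_n$), your $f\I_n=H_n^{-1}(\beta\II_n/\beta\I_n)^{(1+\alpha)/2}$ equals $\infty\cdot 0=0$ on $\{\beta\II_n=0\}$, which is $P\I_n$-a.e., so $\int f\I_n\,\dd P\I_n=0\ne1$ and the move is inadmissible. The paper handles this by switching to an explicit degenerate pair of moves (both $\infty$ on a separating set, $1$ elsewhere), which makes both sides of (\ref{eq:small-alpha}) equal to $\infty$. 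You should also spell out what happens on the first step where $(\beta\I_n=0\;\&\;\beta\II_n>0)$ or $(\beta\I_n>0\;\&\;\beta\II_n=0)$, where the paper has the Sceptics stop playing so that (\ref{eq:small-alpha}) holds in the sense of the $\infty-\infty$ convention announced after the theorem statement. Neither issue affects Part~\ref{it:na2}, where, as you implicitly note, $H_n=\infty$ makes the right-hand side of (\ref{eq:big-alpha}) trivially $+\infty$.
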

We regard (\ref{eq:small-alpha}) and (\ref{eq:big-alpha}) to be true
if their left-hand side is an indefinite expression of the form $\infty-\infty$.

Part \ref{it:na1} of Theorem \ref{thm:non-asymptotic}
is analogous to Part \ref{it:c1} of Theorem \ref{thm:criterion}.
We will only be interested in the inequality ``$\ge$'' of (\ref{eq:small-alpha})
for $\alpha\in(-1,1)$
(for such $\alpha$s both coefficients $\frac{2}{1+\alpha}$ and $\frac{2}{1-\alpha}$
are positive).
By (\ref{eq:inequalities}) this inequality then implies
\begin{equation*}
  \frac{2}{1+\alpha}
  \ln\K\I_N
  +
  \frac{2}{1-\alpha}
  \ln\K\II_N
  \ge
  \sum_{n=1}^{N}
  D^{(\alpha)}
  \left(
    P\I_n\para P\II_n
  \right),
\end{equation*}
which is a precise quantification of Part \ref{it:c1}
of Theorem \ref{thm:criterion}.

Part \ref{it:na2} of Theorem \ref{thm:non-asymptotic}
greatly strengthens the inequality ``$\le$'' of (\ref{eq:small-alpha})
in the case $\alpha<-1$.
Not only can this inequality be attained
when the Sceptics collaborate with each other,
but Sceptic I alone can enforce it,
even if Sceptic II plays against him.
It is close to being a quantification of Part \ref{it:na2} of Theorem \ref{thm:criterion}.
There is, however,
an essential difference between Part \ref{it:na2} of Theorem \ref{thm:non-asymptotic}
and Part \ref{it:c2} of Theorem \ref{thm:criterion}:
$\alpha<-1$ in the former and $\alpha\in(-1,1)$ in the latter.

By (\ref{eq:inequalities}),
inequality (\ref{eq:big-alpha})
will continue to hold if $D^{[\alpha]}$ is replaced by $D^{(\alpha)}$.
An important special case is where $\alpha=-3$
(considered in \citealt{\VovkDAN}, Theorem 1);
the $(-3)$-divergence becomes the \emph{$\chi^2$ distance}
\begin{equation*}
  D^{(-3)}
  \left(
    P\I\para P\II
  \right)
  =
  \frac12
  \int_{\Omega}
    \frac{(\beta\I(\omega) - \beta\II(\omega))^2}{\beta\II(\omega)}
  Q(\dd\omega)
\end{equation*}
(in the notation of (\ref{eq:divergence}) and assuming $\beta\II>0$;
there is no coefficient $\frac12$ in \citealt{\VovkDAN}).

In the rest of this section we will prove Theorem \ref{thm:non-asymptotic}
mainly following \citet{\VovkDAN} and \citet{fujiwara:2007}.
There are, however, two important differences.
First, our argument will be much more precise
as compared to the $O(1)$ accuracy of the algorithmic theory of randomness.
Second, we will pay careful attention
to the ``exceptional'' cases where $\beta\I_n=0$ or $\beta\II_n=0$;
this corresponds to getting rid of the assumption of local absolute continuity
in measure-theoretic probability
(accomplished by \citealt{pukelsheim:1986}).

\subsection{Proof of Part \ref{it:na1} of Theorem \ref{thm:non-asymptotic}}


Let Sceptic I play the strategy
\begin{multline}\label{eq:sceptic-1}
  f\I_n
  :=
  \frac
  {
    \left(
      \beta\II_n / \beta\I_n
    \right)^{\frac{1+\alpha}{2}}
  }
  {
    \int
      \left(
        \beta\I_n
      \right)^{\frac{1-\alpha}{2}}
      \left(
        \beta\II_n
      \right)^{\frac{1+\alpha}{2}}
    \dd Q_n
  }\\
  =
  \left(
    \frac{\beta\II_n}{\beta\I_n}
  \right)^{\frac{1+\alpha}{2}}
  \exp
  \left(
    \frac{1-\alpha^2}{4}
    D^{[\alpha]}
    \left(
      P\I_n\para P\II_n
    \right)
  \right)
\end{multline}
(the Hellinger integral in the first denominator is just the normalizing constant)
and Sceptic II play the strategy
\begin{multline}\label{eq:sceptic-2}
  f\II_n
  :=
  \frac
  {
    \left(
      \beta\I_n / \beta\II_n
    \right)^{\frac{1-\alpha}{2}}
  }
  {
    \int
      \left(
        \beta\I_n
      \right)^{\frac{1-\alpha}{2}}
      \left(
        \beta\II_n
      \right)^{\frac{1+\alpha}{2}}
    \dd Q_n
  }\\
  =
  \left(
    \frac{\beta\I_n}{\beta\II_n}
  \right)^{\frac{1-\alpha}{2}}
  \exp
  \left(
    \frac{1-\alpha^2}{4}
    D^{[\alpha]}
    \left(
      P\I_n\para P\II_n
    \right)
  \right).
\end{multline}
From
\ifnotFULL
\begin{multline*}
  \left(
    f\I_n
  \right)^{\frac{2}{1+\alpha}}
  \left(
    f\II_n
  \right)^{\frac{2}{1-\alpha}}\\
  =
  \exp
  \left(
    \frac{1-\alpha}{2}
    D^{[\alpha]}
    \left(
      P\I_n\para P\II_n
    \right)
    +
    \frac{1+\alpha}{2}
    D^{[\alpha]}
    \left(
      P\I_n\para P\II_n
    \right)
  \right)\\
  =
  \exp
  \left(
    D^{[\alpha]}
    \left(
      P\I_n\para P\II_n
    \right)
  \right)
\end{multline*}
\fi
\ifFULL\bluebegin
\begin{multline*}
  \left(
    f\I_n
  \right)^{\frac{2}{1+\alpha}}
  \left(
    f\II_n
  \right)^{\frac{2}{1-\alpha}}\\
    =
    \exp
    \left(
      \frac{2}{1+\alpha}
      \frac{1-\alpha^2}{4}
      D^{[\alpha]}
      \left(
        P\I_n\para P\II_n
      \right)
      +
      \frac{2}{1-\alpha}
      \frac{1-\alpha^2}{4}
      D^{[\alpha]}
      \left(
        P\I_n\para P\II_n
      \right)
    \right)\\
  =
  \exp
  \left(
    \frac{1-\alpha}{2}
    D^{[\alpha]}
    \left(
      P\I_n\para P\II_n
    \right)
    +
    \frac{1+\alpha}{2}
    D^{[\alpha]}
    \left(
      P\I_n\para P\II_n
    \right)
  \right)
\end{multline*}
\blueend\fi
we now obtain (\ref{eq:small-alpha}).

Let us now look more carefully at the case
where some of the denominators in (\ref{eq:sceptic-1}) or (\ref{eq:sceptic-2})
are zero
and so the above argument is not applicable directly.
If the Hellinger integral (\ref{eq:Hellinger-integral-1}) at time $n$,
\begin{equation}\label{eq:Hellinger-integral-2}
  \int_{\Omega}
    (\beta\I_n(\omega))^{\frac{1-\alpha}{2}}
    (\beta\II_n(\omega))^{\frac{1+\alpha}{2}}
  Q(\dd\omega),
\end{equation}
is zero,
the probability measures $P\I_n$ and $P\II_n$ are mutually singular.
Choose a local event $E$ such that $P\I_n(E)=P\II_n(\Omega\setminus E)=0$.
If the Sceptics choose
\begin{equation*}
  f\I_n(\omega)
  :=
  \begin{cases}
    \infty & \text{if $\omega\in E$}\\
    1 & \text{otherwise},
  \end{cases}
  \quad
  f\II_n(\omega)
  :=
  \begin{cases}
    1 & \text{if $\omega\in E$}\\
    \infty & \text{otherwise},
  \end{cases}
\end{equation*}
(\ref{eq:small-alpha}) will be guaranteed to hold:
both sides will be $\infty$.

Let us now suppose that the Hellinger integral (\ref{eq:Hellinger-integral-2})
is non-zero.
In (\ref{eq:sceptic-1}) and (\ref{eq:sceptic-2}),
we interpret $0/0$ as $1$
(and, of course, $t/0$ as $\infty$ for $t>0$).
As soon as the local event
\begin{equation*}
  \left(
    \beta\I_n=0 \;\&\; \beta\II_n>0
  \right)
  \text{ or }
  \left(
    \beta\I_n>0 \;\&\; \beta\II_n=0
  \right)
\end{equation*}
happens for the first time
(if it ever happens),
the Sceptics stop playing,
in the sense of selecting $f\I_N=f\II_N:=1$ for all $N>n$.
This will make sure that (\ref{eq:small-alpha}) always holds
(in the sense of the convention
introduced after the statement of the theorem).

\subsection{Proof of Part \ref{it:na2} of Theorem \ref{thm:non-asymptotic}}

Fix $\alpha<-1$ and consider two strategies for Sceptic I:
the one he played before,
(\ref{eq:sceptic-1}),
and the strategy
\begin{equation}\label{eq:strategy-2}
  f\I_n
  =
  \frac{\beta\II_n}{\beta\I_n}
  f\II_n.
\end{equation}
\ifFULL\bluebegin
  Intuitively, strategy (\ref{eq:strategy-2}) makes him rich
  when Forecaster II is much better than Forecaster I
  in the sense of the \emph{likelihood ratio}
  $\prod_{n=1}^N(\beta\II_n/\beta\I_n)$
  being large
  or Sceptic II is very successful. 
  Strategy (\ref{eq:sceptic-1}) makes him rich
  when Forecaster I is much better than Forecaster II
  in the sense of the likelihood ratio
  or the Forecasters' predictions disagree considerably.
\blueend\fi
Investing a fraction $c\in(0,1)$ of his initial capital of 1
in strategy (\ref{eq:sceptic-1})
and investing the rest, $1-c$, in strategy (\ref{eq:strategy-2}),
Sceptic I achieves a capital of
\begin{equation}\label{eq:overall-capital}
  c
  \prod_{n=1}^N
  \left(
    \frac{\beta\II_n}{\beta\I_n}
  \right)^{\frac{1+\alpha}{2}}
  \exp
  \left(
    \frac{1-\alpha^2}{4}
    \sum_{n=1}^N
    D^{[\alpha]}
    \left(
      P\I_n\para P\II_n
    \right)
  \right)
  +
  (1-c)
  \K\II_N
  \prod_{n=1}^N
  \frac{\beta\II_n}{\beta\I_n}.
\end{equation}
To get rid of the likelihood ratio
$x:=\prod_{n=1}^N(\beta\II_n/\beta\I_n)$,
we bound (\ref{eq:overall-capital}) from below by
\begin{align}
  &\inf_{x>0}
  \left(
    c
    x^{\frac{1+\alpha}{2}}
    \exp
    \left(
      \frac{1-\alpha^2}{4}
      \sum_{n=1}^N
      D^{[\alpha]}
      \left(
        P\I_n\para P\II_n
      \right)
    \right)
    +
    (1-c)
    \K\II_N
    x
  \right)\label{eq:minimal-capital1}\\
  &=
  \left(
    \left(
      \frac{-1-\alpha}{2}
    \right)^{\frac{2}{1-\alpha}}
    +
    \left(
      \frac{2}{-1-\alpha}
    \right)^{-\frac{1+\alpha}{1-\alpha}}
  \right)
  c^{\frac{2}{1-\alpha}}
  (1-c)^{-\frac{1+\alpha}{1-\alpha}}\label{eq:minimal-capital2}\\
  &\times
  \exp
  \left(
    \frac{1+\alpha}{2}
    \sum_{n=1}^N
    D^{[\alpha]}
    \left(
      P\I_n\para P\II_n
    \right)
  \right)
  \left(
    \K\II_N
  \right)^{-\frac{1+\alpha}{1-\alpha}}.\label{eq:minimal-capital3}
\end{align}
\ifFULL\bluebegin
  This calculation uses the following remark
  from \citet{DF11arXiv}:
  the minimum of
  \begin{equation*}
    A x^{-a}
    +
    B x^{b}
    \to
    \min,
  \end{equation*}
  where $A,B,a,b$ are positive numbers
  and $x$ ranges over $(0,\infty)$,
  is attained at
  \begin{equation}\label{eq:attained-at}
    x
    =
    \left(
      \frac{Aa}{Bb}
    \right)^{\frac{1}{a+b}}
  \end{equation}
  and is equal to
  \begin{equation*}
    \left(
      (a/b)^{\frac{b}{a+b}}
      +
      (b/a)^{\frac{a}{a+b}}
    \right)
    A^{\frac{b}{a+b}}
    B^{\frac{a}{a+b}}.
  \end{equation*}
  In our case,
  \begin{align*}
    \frac{b}{a+b}
    &=
    \frac{2}{1-\alpha}\\
    \frac{a}{a+b}
    &=
    \frac{-1-\alpha}{1-\alpha}.
  \end{align*}
\blueend\fi
To optimize this lower bound,
we find $c$ such that
\begin{equation*}
  c^{\frac{2}{1-\alpha}}
  (1-c)^{-\frac{1+\alpha}{1-\alpha}}
  \to
  \max,
\end{equation*}
which gives
\begin{equation*}
  c
  =
  \frac{2}{1-\alpha}.
\end{equation*}
\ifFULL\bluebegin
  Indeed,
  \begin{align*}
    c^{\frac{2}{1-\alpha}}
    (1-c)^{-\frac{1+\alpha}{1-\alpha}}
    &\to
    \max,\\
    c^2
    (1-c)^{-1-\alpha}
    &\to
    \max,\\
    2c(1-c)^{-1-\alpha}
    -
    c^2(-1-\alpha)(1-c)^{-2-\alpha}
    &=
    0,\\
    2(1-c)
    -
    c(-1-\alpha)
    &=
    0,\\
    c
    &=
    \frac{2}{1-\alpha}.
  \end{align*}
\blueend\fi
For this value of $c$,
the expression on line (\ref{eq:minimal-capital2}) equals
\begin{equation*}
  \left(
    \left(
      \frac{-1-\alpha}{2}
    \right)^{\frac{2}{1-\alpha}}
    +
    \left(
      \frac{2}{-1-\alpha}
    \right)^{-\frac{1+\alpha}{1-\alpha}}
  \right)
  \left(
    \frac{2}{1-\alpha}
  \right)^{\frac{2}{1-\alpha}}
  \left(
    \frac{-1-\alpha}{1-\alpha}
  \right)^{-\frac{1+\alpha}{1-\alpha}}
  =
  1.
\end{equation*}
\ifFULL\bluebegin
  Indeed,
  \begin{multline*}
    \left(
      \left(
        \frac{-1-\alpha}{2}
      \right)^{\frac{2}{1-\alpha}}
      +
      \left(
        \frac{2}{-1-\alpha}
      \right)^{-\frac{1+\alpha}{1-\alpha}}
    \right)
    \left(
      \frac{2}{1-\alpha}
    \right)^{\frac{2}{1-\alpha}}
    \left(
      \frac{-1-\alpha}{1-\alpha}
    \right)^{-\frac{1+\alpha}{1-\alpha}}\\
    =
    \left(
      \frac{-1-\alpha}{1-\alpha}
    \right)^{\frac{2}{1-\alpha}}
    \left(
      \frac{-1-\alpha}{1-\alpha}
    \right)^{-\frac{1+\alpha}{1-\alpha}}
    +
    \left(
      \frac{2}{1-\alpha}
    \right)^{-\frac{1+\alpha}{1-\alpha}}
    \left(
      \frac{2}{1-\alpha}
    \right)^{\frac{2}{1-\alpha}}\\
    =
    \frac{-1-\alpha}{1-\alpha}
    +
    \frac{2}{1-\alpha}
    =
    1.
  \end{multline*}
\blueend\fi
In combination with (\ref{eq:minimal-capital3}) this gives
\begin{equation*}
  \K\I_N
  \ge
  \exp
  \left(
    \frac{1+\alpha}{2}
    \sum_{n=1}^N
    D^{[\alpha]}
    \left(
      P\I_n\para P\II_n
    \right)
  \right)
  \left(
    \K\II_N
  \right)^{-\frac{1+\alpha}{1-\alpha}}.
\end{equation*}
Taking the logarithm of both sides and regrouping,
we obtain (\ref{eq:big-alpha}).

\ifFULL\bluebegin
  For the record:
  the minimum in (\ref{eq:minimal-capital1}) is attained for
  \begin{multline*}
    t
    =
    \prod_{n=1}^N
    \frac{\beta\II_n}{\beta\I_n}
    =
    \left(
      \frac
      {
        c
        \exp
        \left(
          \frac{1-\alpha^2}{4}
          \sum_{n=1}^N
          D^{[\alpha]}
          \left(
            P\I_n\para P\II_n
          \right)
        \right)
        \frac{-1-\alpha}{2}
      }
      {
        (1-c)
        \K\II_N
      }
    \right)^{\frac{2}{1-\alpha}}\\
    =
    \left(
      \frac
      {
        \exp
        \left(
          \frac{1-\alpha^2}{4}
          \sum_{n=1}^N
          D^{[\alpha]}
          \left(
            P\I_n\para P\II_n
          \right)
        \right)
      }
      {
        \K\II_N
      }
    \right)^{\frac{2}{1-\alpha}}
  \end{multline*}
  (from (\ref{eq:attained-at})).
\blueend\fi

It remains to consider the exceptional cases.
If the Hellinger integral (\ref{eq:Hellinger-integral-2}) is infinite
(this happens when $P\I_n$ is not absolutely continuous w.r.\ to $P\II_n$),
the right-hand side of (\ref{eq:big-alpha}) is also infinite,
and there is nothing to prove.
Suppose (\ref{eq:Hellinger-integral-2}) is finite.
When $\beta\I_n=0$,
we will interpret the expressions (\ref{eq:sceptic-1}) and (\ref{eq:strategy-2})
as $\infty$
(this will not affect $\int f\I_n\dd P\I_n$;
in principle,
it is now possible that $\int f\I_n\dd P\I_n<1$,
but this can only hurt Sceptic I).
When $\beta\I_n>0\;\&\;\beta\II_n=0$,
we, naturally, interpret (\ref{eq:sceptic-1}) as $\infty$.
In both cases Sceptic I's capital becomes infinite
when $\beta\I_n=0$ or $\beta\II_n=0$,
and (\ref{eq:big-alpha}) still holds.

\section{Proof of Theorems \ref{thm:criterion} and \ref{thm:general-criterion}}
\label{sec:criterion-proof}

Part \ref{it:c1} of Theorems \ref{thm:criterion} and \ref{thm:general-criterion}
immediately follows
from Part \ref{it:na1} of Theorem \ref{thm:non-asymptotic}
(in the case of Theorem \ref{thm:general-criterion},
the Forecasters' moves $f\I_n$ and $f\II_n$ have to be slightly redefined
by setting them to $\infty$ on $E\I$ and $E\II$, respectively).
Therefore, in this section we will only prove Part \ref{it:c2}
of Theorems \ref{thm:criterion} and \ref{thm:general-criterion}.
Instead of deducing this result from Part \ref{it:na2} of Theorem \ref{thm:non-asymptotic}
(as was done in \citealt{\VovkDAN} and \citealt{fujiwara:2007}),
we will prove it using methods of the theory of martingales
and adapting the proof given in Shiryaev (\citeyear{shiryaev:1996}, Theorem VII.6.4).

The following lemma from \citet{fujiwara:2007}
shows that all $\alpha$-divergences, $\alpha\in(-1,1)$,
coincide to within a constant factor.
We will be using the words ``increasing'' and ``decreasing''
in the wide sense
(e.g., a constant function qualifies as both increasing and decreasing).
\begin{lemma}
  Let $P',P''\in\PPP(\Omega)$.
  The function $(1-\alpha)D^{(\alpha)}(P',P'')$
  is decreasing in $\alpha\in(-1,1)$.
  The function $(1+\alpha)D^{(\alpha)}(P',P'')$
  is increasing in $\alpha\in(-1,1)$.
\end{lemma}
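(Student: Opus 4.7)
The plan is to reduce both monotonicity statements to a single fact about nonnegative concave functions, applied to the Hellinger integral $h(\alpha):=\int_\Omega (\beta'(\omega))^{(1-\alpha)/2}(\beta''(\omega))^{(1+\alpha)/2}\,Q(\dd\omega)$. From the definition (\ref{eq:divergence}) one reads off
\[
  (1-\alpha)\,D^{(\alpha)}(P'\para P'')=\frac{4(1-h(\alpha))}{1+\alpha},
  \qquad
  (1+\alpha)\,D^{(\alpha)}(P'\para P'')=\frac{4(1-h(\alpha))}{1-\alpha},
\]
so setting $g(\alpha):=1-h(\alpha)$ the two claims become: $g(\alpha)/(1+\alpha)$ is decreasing and $g(\alpha)/(1-\alpha)$ is increasing on $(-1,1)$.

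First I would establish that $g$ is concave and nonnegative on $(-1,1)$. For the concavity, fix $\omega$: if both $\beta'(\omega)>0$ and $\beta''(\omega)>0$, the integrand equals $\exp\bigl(\tfrac{1-\alpha}{2}\ln\beta'(\omega)+\tfrac{1+\alpha}{2}\ln\beta''(\omega)\bigr)$, an exponential of an affine function of $\alpha$, hence convex in $\alpha$; if $\beta'(\omega)=0$ or $\beta''(\omega)=0$, the integrand is identically zero on $(-1,1)$ under the convention $0\cdot\infty=0$. Integrating in $Q$ preserves convexity, so $h$ is convex and $g$ concave. For nonnegativity, apply H\"older's inequality with the conjugate exponents $p=2/(1-\alpha)$ and $q=2/(1+\alpha)$, which are positive and reciprocal-summing to $1$ on $(-1,1)$, to obtain
\[
  h(\alpha)\le\Bigl(\int\beta'\,\dd Q\Bigr)^{(1-\alpha)/2}\Bigl(\int\beta''\,\dd Q\Bigr)^{(1+\alpha)/2}=1.
\]

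Next I would extract the monotonicity from $g$ being concave and $g\ge 0$. For any $-1<\alpha_0<\alpha_1<\alpha_2<1$, writing $\alpha_1$ as a convex combination of $\alpha_0$ and $\alpha_2$ and applying concavity gives
\[
  (\alpha_2-\alpha_0)\,g(\alpha_1)\;\ge\;(\alpha_2-\alpha_1)\,g(\alpha_0)+(\alpha_1-\alpha_0)\,g(\alpha_2)\;\ge\;(\alpha_1-\alpha_0)\,g(\alpha_2),
\]
the second inequality using $g(\alpha_0)\ge 0$. Letting $\alpha_0\to -1^+$ yields $(1+\alpha_2)\,g(\alpha_1)\ge (1+\alpha_1)\,g(\alpha_2)$, which is exactly the assertion that $g(\alpha)/(1+\alpha)$, and thus $(1-\alpha)D^{(\alpha)}(P'\para P'')$, is decreasing. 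Symmetrically, for $-1<\alpha_1<\alpha_2<\alpha_3<1$ concavity and $g(\alpha_3)\ge 0$ give $(\alpha_3-\alpha_1)\,g(\alpha_2)\ge(\alpha_3-\alpha_2)\,g(\alpha_1)$, and sending $\alpha_3\to 1^-$ produces $(1-\alpha_1)\,g(\alpha_2)\ge(1-\alpha_2)\,g(\alpha_1)$, i.e.\ $(1+\alpha)D^{(\alpha)}(P'\para P'')$ is increasing.

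I expect the main subtlety to be not the structural argument but the careful handling of the boundary behaviour and of the measure-zero sets where one of the densities vanishes or is infinite. In particular, the pointwise convexity argument must accommodate the convention $0\cdot\infty=0$ used in (\ref{eq:divergence}), and the boundary passages $\alpha_0\to -1^+$ and $\alpha_3\to 1^-$ must be justified without assuming continuity of $h$ at $\pm 1$; fortunately the chord inequality itself holds for all interior $\alpha_0,\alpha_3$ and each side is continuous in those parameters with $g(\alpha_1),g(\alpha_2)$ fixed, so the limits pass through trivially.
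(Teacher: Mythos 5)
Your proof is correct and self-contained, whereas the paper does not supply an argument at all—it merely cites Fujiwara (2007), Lemma~10 (Appendix~B). The reduction you use is the natural one: expressing both claims as chord inequalities for the concave, nonnegative function $g(\alpha)=1-h(\alpha)$, with concavity coming from pointwise convexity of $\alpha\mapsto\exp\bigl(\tfrac{1-\alpha}{2}\ln\beta'+\tfrac{1+\alpha}{2}\ln\beta''\bigr)$ (the integrand being identically zero on the set where a density vanishes, which is harmless), and nonnegativity from H\"older with conjugate exponents $2/(1-\alpha)$ and $2/(1+\alpha)$. Since the chord inequality is affine in the endpoint being sent to $-1$ or $1$ while $g(\alpha_1)$ and $g(\alpha_2)$ are held fixed, the boundary passages $\alpha_0\to -1^+$ and $\alpha_3\to 1^-$ go through trivially, exactly as you observe, and no continuity of $h$ at $\pm1$ is needed. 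This is a clean, elementary route to both monotonicity assertions simultaneously, and it plugs the citation hole in the paper.
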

\begin{proof}
  See the proof of Lemma 10 in Fujiwara (\citeyear{fujiwara:2007}, Appendix B).
  \ifnotJOURNAL
    \qedtext
  \fi
  \ifJOURNAL
    \qed
  \fi
\end{proof}
Therefore, we could restrict ourselves only to the Hellinger distance.
We will, however, consider the general case
(as \citealt{fujiwara:2007}).

We will be concerned with the following slight modification of the testing protocol.

\medskip

\noindent
\textsc{Semimartingale protocol (multiplicative representation)}

\noindent
\textbf{Players:} Reality, Forecaster, Sceptic

\noindent
\textbf{Protocol:}

\parshape=8
\IndentI   \WidthI
\IndentI   \WidthI
\IndentII  \WidthII
\IndentII  \WidthII
\IndentII  \WidthII
\IndentII  \WidthII
\IndentII  \WidthII
\IndentI   \WidthI
\noindent
$\K_0 := 1$.\\
FOR $n=1,2,\dots$:\\
  Forecaster announces $P_n\in\PPP(\Omega)$.\\
  Reality announces measurable $\xi_n:\Omega\to\bbbr$.\\
  Sceptic announces $f_n:\Omega\to[0,\infty]$
	such that $\int f_n \dd P_n=1$.\\
  Reality announces $\omega_n\in\Omega$.\\
  $\K_n := \K_{n-1} f_n (\omega_n)$.\\
END FOR

\medskip

\noindent
The \emph{martingale protocol}
(resp.\ \emph{submartingale protocol}, \emph{supermartingale protocol})
differs from the semimartingale protocol
in that Reality is required to ensure
that, for all $n$, the function $\xi_n$ is $P_n$-integrable
and $\int\xi_n \dd P_n$ is zero
(resp.\ nonnegative, nonpositive).

An \emph{event} is a property of the play $(P_n,\xi_n,f_n,\omega_n)_{n=1}^{\infty}$.
We say that Sceptic \emph{can force} an event $E$ if he has a strategy
guaranteeing that either $E$ holds or $\K_n\to\infty$ as $n\to\infty$.
Lemma \ref{lem:equivalence} shows
that replacing $\K_n\to\infty$ by $\limsup_n\K_n=\infty$
gives an equivalent definition.

Another representation of the semimartingale protocol is:

\medskip

\noindent
\textsc{Semimartingale protocol (additive representation)}

\noindent
\textbf{Players:} Reality, Forecaster, Sceptic

\noindent
\textbf{Protocol:}

\parshape=8
\IndentI   \WidthI
\IndentI   \WidthI
\IndentII  \WidthII
\IndentII  \WidthII
\IndentII  \WidthII
\IndentII  \WidthII
\IndentII  \WidthII
\IndentI   \WidthI
\noindent
$\K_0 := 1$.\\
FOR $n=1,2,\dots$:\\
  Forecaster announces $P_n\in\PPP(\Omega)$.\\
  Reality announces measurable $\xi_n:\Omega\to\bbbr$.\\
  Sceptic announces $g_n:\Omega\to[-\K_{n-1},\infty]$
	such that $\int g_n \dd P_n=0$.\\
  Reality announces $\omega_n\in\Omega$.\\
  $\K_n := \K_{n-1} + g_n (\omega_n)$.\\
END FOR

\medskip

\noindent
The correspondence between the two representations of the semimartingale protocol
is given by $g_n=(f_n-1)\K_{n-1}$.
We will switch at will between the two representations.

If Sceptic follows a strategy $\SSS$,
we will let $\K_n^{\SSS}$ stand for his capital at the end of step $n$
(as a function of Forecaster's and Reality's moves).
The additive representation makes it obvious that Sceptic's strategies can be mixed:
\begin{lemma}\label{lem:mix}
  If $\SSS_1,\SSS_2,\ldots$ is a sequence of strategies for Sceptic
  and $p_1,p_2,\ldots$ is a sequence of positive weights summing to $1$,
  there is a ``master'' strategy $\SSS$ for Sceptic ensuring
  \begin{equation*}
    \K^{\SSS}_n
    =
    \sum_{k=1}^{\infty}
    p_k
    \K^{\SSS_k}_n.
  \end{equation*}
\end{lemma}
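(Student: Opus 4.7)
The plan is to work in the additive representation of the semimartingale protocol, where the bookkeeping for mixing is transparent. Let each $\SSS_k$ recommend, at step $n$ (given the previous play), a move $g_n^{(k)}:\Omega\to[-\K_{n-1}^{\SSS_k},\infty]$ with $\int g_n^{(k)}\dd P_n=0$. I would define the master strategy $\SSS$ to recommend
\begin{equation*}
  g_n
  :=
  \sum_{k=1}^{\infty}
  p_k\,g_n^{(k)}.
\end{equation*}
Note that it is important that the same sequence $(P_n,\xi_n,\omega_n)$ of Forecaster/Reality moves is fed to every $\SSS_k$, which is precisely what the protocol allows.

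Next I would check that $g_n$ is an admissible move. Measurability is immediate. For the lower bound, the inequalities $g_n^{(k)}\ge-\K_{n-1}^{\SSS_k}$ give, after multiplying by $p_k$ and summing,
\begin{equation*}
  g_n
  \ge
  -\sum_{k=1}^{\infty} p_k\,\K_{n-1}^{\SSS_k}
  =
  -\K_{n-1}^{\SSS},
\end{equation*}
using the inductive identity $\K_{n-1}^{\SSS}=\sum_k p_k\K_{n-1}^{\SSS_k}$. For the integrability and zero-mean condition, I would shift each summand by $\K_{n-1}^{\SSS_k}$ to make it nonnegative and apply monotone convergence:
\begin{equation*}
  \int_{\Omega}
  \bigl(g_n+\K_{n-1}^{\SSS}\bigr)\dd P_n
  =
  \sum_{k=1}^{\infty}
  p_k
  \int_{\Omega}
  \bigl(g_n^{(k)}+\K_{n-1}^{\SSS_k}\bigr)\dd P_n
  =
  \sum_{k=1}^{\infty}
  p_k\,\K_{n-1}^{\SSS_k}
  =
  \K_{n-1}^{\SSS},
\end{equation*}
so $\int g_n\dd P_n=0$ as required.

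Finally, a one-line induction on $n$ closes the argument: assuming $\K_{n-1}^{\SSS}=\sum_k p_k\K_{n-1}^{\SSS_k}$, we get
\begin{equation*}
  \K_n^{\SSS}
  =
  \K_{n-1}^{\SSS}+g_n(\omega_n)
  =
  \sum_{k=1}^{\infty}
  p_k\bigl(\K_{n-1}^{\SSS_k}+g_n^{(k)}(\omega_n)\bigr)
  =
  \sum_{k=1}^{\infty}
  p_k\,\K_n^{\SSS_k}.
\end{equation*}
The only mildly delicate point is the interchange of sum and integral, which is the reason for passing to the additive representation (the multiplicative form would require handling the constraint $f_n\ge 0$ through a weighted average with weights $p_k\K_{n-1}^{\SSS_k}/\K_{n-1}^{\SSS}$, which is equivalent but less clean). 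Everything else is a direct computation.
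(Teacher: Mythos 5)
Your proof is exactly the paper's: the paper simply defines $g_n:=\sum_k p_k g_{k,n}$ in the additive representation and leaves the verification implicit, while you spell out the admissibility checks and the induction. Same approach, fully correct.
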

\begin{proof}
  It suffices for Sceptic to set
  $g_n:=\sum_{k=1}^{\infty} p_k g_{k,n}$
  at each step $n$,
  where $g_{k,n}$ is the move recommended by $\SSS_k$.
  \ifnotJOURNAL
    \qedtext
  \fi
  \ifJOURNAL
    \qed
  \fi
\end{proof}

\begin{lemma}\label{lem:martingale}
  In the martingale protocol,
  Sceptic can force
  \begin{equation*}
    \sum_{n=1}^{\infty}
    \int
    \xi_n^2
    \dd P_n
    <
    \infty
    \Longrightarrow
    \sup_N
    \sum_{n=1}^N
    \xi_n(\omega_n)
    <
    \infty.
  \end{equation*}
\end{lemma}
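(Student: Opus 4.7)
The plan is to give a Doob-decomposition style strategy in the additive representation of the martingale protocol: for each positive integer $A$, I will construct a strategy $\SSS_A$ that maintains Sceptic's capital exactly equal to $1+(S_n^2-V_n)/A$ for as long as the quadratic variation $V_n:=\sum_{k=1}^n\int\xi_k^2\,\dd P_k$ does not exceed $A$, and then mix over $A$ using Lemma~\ref{lem:mix}. Throughout, $S_n:=\sum_{k=1}^n\xi_k(\omega_k)$.

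For fixed $A$, the strategy $\SSS_A$ will act at step $n$ as follows: since Sceptic has already seen $P_n$ and $\xi_n$, he knows $V_n$, and if $V_n\le A$ he plays
\begin{equation*}
  g_n(\omega):=\frac{2S_{n-1}\xi_n(\omega)+\xi_n(\omega)^2-\int\xi_n^2\,\dd P_n}{A},
\end{equation*}
while if $V_n>A$ he plays $g_n\equiv0$. The zero-mean condition $\int g_n\,\dd P_n=0$ is immediate from $\int\xi_n\,\dd P_n=0$. A one-line induction will then show that, on the event $V_m\le A$ for all $m\le n$, Sceptic's capital equals $\K_n^{\SSS_A}=1+(S_n^2-V_n)/A\ge S_n^2/A$.

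The conclusion will follow by mixing: apply Lemma~\ref{lem:mix} with weights $2^{-A}$ over $A=1,2,\ldots$ to obtain a master strategy $\SSS$. On any play with $V_\infty<\infty$ and $\sup_nS_n=\infty$, pick any integer $A\ge V_\infty$; then $\K_n^{\SSS}\ge 2^{-A}S_n^2/A$, which is unbounded (note that $\sup_nS_n=\infty$ implies $\sup_nS_n^2=\infty$), so $\limsup_n\K_n^{\SSS}=\infty$, and Lemma~\ref{lem:equivalence} upgrades this to $\lim_n\K_n^{\SSS}=\infty$, which is what the definition of ``force'' requires.

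The hard part will be verifying the budget condition $g_n(\omega)\ge -\K_{n-1}^{\SSS_A}$, which has to hold for \emph{every} $\omega\in\Omega$, not merely at the realized $\omega_n$; note that $\xi_n$ is not assumed bounded. Expanding $(S_{n-1}+\xi_n(\omega))^2=S_{n-1}^2+2S_{n-1}\xi_n(\omega)+\xi_n(\omega)^2$ will reduce this inequality, after using the inductive formula for $\K_{n-1}^{\SSS_A}$, to $(S_{n-1}+\xi_n(\omega))^2\ge V_n-A$; the right-hand side is nonpositive precisely when $V_n\le A$, which is exactly why the stopping rule is essential. After the stopping time, Sceptic's capital freezes and the lower bound $\K_n^{\SSS_A}\ge S_n^2/A$ may fail for this particular $A$, but that is harmless because the master strategy handles such plays through some $\SSS_{A'}$ with $A'\ge V_\infty$.
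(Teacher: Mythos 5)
Your proposal is correct and takes essentially the same route as the paper: the paper defines, for each $C=1,2,\dots$, a strategy $\SSS_C$ achieving capital $1+\frac1C\bigl(S_N^2-V_N\bigr)$ while $V_N\le C$ (freezing thereafter) and then mixes via Lemma~\ref{lem:mix}, which is precisely your construction with $A$ in place of $C$; you merely make explicit the move $g_n$ and the budget check that the paper dismisses with ``it is easy to see.''
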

\begin{proof}
  It is easy to see that for each $C>0$
  there is a strategy, say $\SSS_C$, for Sceptic leading to capital
  \begin{equation*}
    \K_N^{\SSS_C}
    =
    \begin{cases}
      1
      +
      \frac1C
      \left(
        \left(
          \sum_{n=1}^N \xi_n(\omega_n)
        \right)^2
        -
        \sum_{n=1}^N
        \int \xi_n^2 \dd P_n
      \right) 
      & \text{if $\sum_{n=1}^N \int \xi_n^2 \dd P_n \le C$}\\
      \K_{N-1}^{\SSS_C} & \text{otherwise},
    \end{cases}
  \end{equation*}
  for all $N=1,2,\ldots$\,.
  It remains to mix all $\SSS_C$, $C=1,2,\ldots$,
  according to Lemma \ref{lem:mix}
  (with arbitrary positive weights).
  \ifnotJOURNAL
    \qedtext
  \fi
  \ifJOURNAL
    \qed
  \fi
\end{proof}

\begin{lemma}\label{lem:submartingale}
  In the submartingale protocol,
  Sceptic can force
  \begin{equation*}
    \sum_{n=1}^{\infty}
    \left(
      \int
      \xi_n
      \dd P_n
      +
      \int
      \xi_n^2
      \dd P_n
    \right)
    <
    \infty
    \Longrightarrow
    \sup_N
    \sum_{n=1}^N
    \xi_n(\omega_n)
    <
    \infty.
  \end{equation*}
\end{lemma}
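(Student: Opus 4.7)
The plan is to reduce the submartingale case to the martingale case by centering. Define $\tilde\xi_n := \xi_n - \int \xi_n \dd P_n$; at the point Sceptic moves he already knows both $P_n$ and $\xi_n$, so $\tilde\xi_n$ is available to him as a measurable function of $\omega$. By construction $\int \tilde\xi_n \dd P_n = 0$, so the sequence $(P_n,\tilde\xi_n)_{n=1}^\infty$ satisfies the constraint of the martingale protocol.

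Next I would estimate the quadratic term. Since
\[
  \int \tilde\xi_n^2 \dd P_n
  = \int \xi_n^2 \dd P_n - \Bigl(\int \xi_n \dd P_n\Bigr)^2
  \le \int \xi_n^2 \dd P_n,
\]
the hypothesis $\sum_n (\int \xi_n \dd P_n + \int \xi_n^2 \dd P_n) < \infty$, together with the fact that in the submartingale protocol each $\int \xi_n \dd P_n \ge 0$, implies
\[
  \sum_{n=1}^\infty \int \tilde\xi_n^2 \dd P_n < \infty
  \quad\text{and}\quad
  \sum_{n=1}^\infty \int \xi_n \dd P_n < \infty.
\]

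Now I would apply Lemma~\ref{lem:martingale}: Sceptic has a strategy $\SSS$ that forces the implication $\sum_n \int \tilde\xi_n^2 \dd P_n < \infty \Longrightarrow \sup_N \sum_{n=1}^N \tilde\xi_n(\omega_n) < \infty$. A priori this strategy was defined for the martingale protocol, but the crucial point is that the only property it uses about the integrand is $\int \tilde\xi_n \dd P_n = 0$; hence the identical strategy (viewed as an additive-representation move $g_n$ that only depends on $P_n$, $\tilde\xi_n$ and past moves) is legal in the present protocol and keeps $\K_n \ge 0$ for exactly the same reason. Writing
\[
  \sum_{n=1}^N \xi_n(\omega_n)
  = \sum_{n=1}^N \tilde\xi_n(\omega_n) + \sum_{n=1}^N \int \xi_n \dd P_n,
\]
on the event on which the right-hand side of the forced implication holds the first sum is bounded above in $N$, and the second sum is bounded by the finite total $\sum_{n=1}^\infty \int \xi_n \dd P_n$. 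Hence $\sup_N \sum_{n=1}^N \xi_n(\omega_n) < \infty$, as required.

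There is no real obstacle; the only point that needs care is verifying that the strategy from Lemma~\ref{lem:martingale}, which was stated for the martingale protocol, transfers verbatim to the submartingale protocol once one feeds it the centered differences $\tilde\xi_n$ rather than $\xi_n$. The hypothesis includes the term $\int \xi_n \dd P_n$ (absent from the martingale statement) precisely because in the submartingale case this term provides the controlled, monotone drift which must be separately summable in order to subtract it off without losing control of the partial sums.
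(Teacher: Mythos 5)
Your proof is correct and follows essentially the same route as the paper: the paper's own proof is precisely the one-line reduction of applying Lemma~\ref{lem:martingale} to $\tilde\xi_n := \xi_n - \int\xi_n\,\dd P_n$, with the observation $\int\tilde\xi_n^2\,\dd P_n \le \int\xi_n^2\,\dd P_n$. You have just spelled out the bookkeeping (why the drift sum is finite under the submartingale constraint, why the martingale strategy is legal here) that the paper leaves implicit.
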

\begin{proof}
  It suffices to apply Lemma \ref{lem:martingale} to
  $\tilde{\xi}_n:=\xi_n-\int\xi_n\dd P_n$
  (notice that $\int\tilde{\xi}_n^2\dd P_n\le\int\xi_n^2\dd P_n$).
  \ifnotJOURNAL
    \qedtext
  \fi
  \ifJOURNAL
    \qed
  \fi
\end{proof}

Let $\III_E$ stand for the indicator of local event $E\subseteq\Omega$:
\begin{equation*}
  \III_E(\omega)
  :=
  \begin{cases}
    1 & \text{if $\omega\in E$}\\
    0 & \text{otherwise}.
  \end{cases}
\end{equation*}
The following lemma is a version of the Borel--Cantelli--L\'evy lemma.
\begin{lemma}\label{lem:borel-cantelli}
  In the submartingale protocol,
  Sceptic can force
  \begin{equation*}
    \left(
      \forall n:\xi_n=\III_{E_n}
      \;\&\;
      \sum_{n=1}^{\infty}
      P_n(E_n)
      <
      \infty
    \right)
    \Longrightarrow
    \left(
      \omega_n\in E_n
      \text{ for finitely many $n$}
    \right).
  \end{equation*}
\end{lemma}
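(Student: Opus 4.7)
The plan is to reduce the lemma directly to Lemma~\ref{lem:submartingale} via a trivial observation about indicators. First I note that when $\xi_n = \III_{E_n}$ we have $\xi_n^2 = \xi_n$ pointwise, so
\[
  \int \xi_n \dd P_n
  =
  \int \xi_n^2 \dd P_n
  =
  P_n(E_n).
\]
Hence the antecedent $\sum_n P_n(E_n) < \infty$ together with the assumption $\xi_n = \III_{E_n}$ immediately yields
\[
  \sum_{n=1}^{\infty}
  \left(
    \int \xi_n \dd P_n
    +
    \int \xi_n^2 \dd P_n
  \right)
  =
  2 \sum_{n=1}^{\infty} P_n(E_n)
  <
  \infty.
\]
I also note that, since $\xi_n \ge 0$, Reality's move $\xi_n = \III_{E_n}$ is a legal move in the submartingale protocol ($\int\xi_n\dd P_n = P_n(E_n) \ge 0$).

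Next I invoke Lemma~\ref{lem:submartingale}, which supplies a strategy $\SSS$ for Sceptic that forces
\[
  \sum_{n=1}^{\infty}
  \left(
    \int \xi_n \dd P_n
    +
    \int \xi_n^2 \dd P_n
  \right)
  <
  \infty
  \Longrightarrow
  \sup_N
  \sum_{n=1}^N
  \xi_n(\omega_n)
  <
  \infty.
\]
Playing $\SSS$ ensures that either $\K_n \to \infty$ or the implication holds. In the latter case, under our antecedent, $\sup_N \sum_{n=1}^N \III_{E_n}(\omega_n) < \infty$, and because each summand takes values in $\{0,1\}$, boundedness of the partial sums is equivalent to only finitely many of them equalling $1$, i.e.\ $\omega_n \in E_n$ for at most finitely many $n$.

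There is no genuine obstacle here: the whole statement is just the specialization of Lemma~\ref{lem:submartingale} to indicator increments, exploiting the identity $\III_{E_n}^2 = \III_{E_n}$. The only minor point to be careful about is the interpretation of ``force'' — Sceptic's strategy need not verify the antecedent, but only guarantees that on plays where the antecedent holds and $\K_n$ stays bounded, the conclusion holds, which is exactly what is required.
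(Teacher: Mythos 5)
Your proof is correct and follows exactly the same route as the paper, which dispatches the lemma as ``a special case of Lemma~\ref{lem:submartingale}''; you have simply spelled out the details (the identity $\III_{E_n}^2=\III_{E_n}$, the resulting finiteness of $\sum_n(\int\xi_n\dd P_n+\int\xi_n^2\dd P_n)$, and the translation from bounded partial sums of $\{0,1\}$-valued terms to ``finitely many'').
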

\begin{proof}
  This is a special case of Lemma \ref{lem:submartingale}.
  \ifnotJOURNAL
    \qedtext
  \fi
  \ifJOURNAL
    \qed
  \fi
\end{proof}
Our application of the Borel--Cantelli--L\'evy lemma
will be made possible by the following lemma,
which we state using the notation introduced earlier in (\ref{eq:divergence}).
\begin{lemma}\label{lem:inequality1}
  For each $\alpha\in(-1,1)$ there exists a constant $C=C(\alpha)$
  such that, for all $P\I$ and $P\II$,
  \begin{equation*}
    P\I
    \left\{
      \beta\I > e\beta\II
    \right\}
    \le
    C
    D^{(\alpha)}
    \left(
      P\I\para P\II
    \right).
  \end{equation*}
\end{lemma}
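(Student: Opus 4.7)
The plan is to reduce the claim to an estimate about a single expectation under $P\I$ and then apply a tangent-line bound. Set $s := (1+\alpha)/2 \in (0,1)$ and $Y := \beta\II/\beta\I$, which is defined $P\I$-almost everywhere; the event $\{\beta\I > e\beta\II\}$ coincides, up to a $P\I$-null set, with $E := \{Y < 1/e\}$. Because $1-s > 0$, the integrand $(\beta\I)^{1-s}(\beta\II)^s$ vanishes on $\{\beta\I = 0\}$, so the Hellinger integral appearing in (\ref{eq:divergence}) can be rewritten as $\int Y^s \dd P\I$, giving
\begin{equation*}
  D^{(\alpha)}(P\I\para P\II)
  =
  \frac{4}{1-\alpha^2}
  \Bigl(1 - \int Y^s \dd P\I\Bigr).
\end{equation*}
Hence the lemma reduces to a lower bound of the form $1 - \int Y^s \dd P\I \ge c_\alpha\,P\I(E)$.

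Next I would exploit the concavity of $y \mapsto y^s$ via the tangent inequality $y^s \le 1 + s(y-1)$, which holds for all $y \ge 0$. Define the nonnegative ``gap'' function $g(y) := 1 + s(y-1) - y^s$. Using the identity $\int Y \dd P\I = \int_{\{\beta\I > 0\}} \beta\II \dd Q = P\II(\{\beta\I > 0\}) \le 1$, integration under $P\I$ yields
\begin{equation*}
  1 - \int Y^s \dd P\I
  =
  s\Bigl(1 - \int Y \dd P\I\Bigr) + \int g(Y) \dd P\I
  \ge
  \int g(Y) \dd P\I.
\end{equation*}
A direct computation gives $g'(y) = s(1 - y^{s-1})$, showing that $g$ is strictly decreasing on $(0,1]$ and strictly increasing on $[1,\infty)$, with minimum $g(1) = 0$. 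Therefore $g(Y) \ge g(1/e)$ on $E$, so
\begin{equation*}
  1 - \int Y^s \dd P\I
  \ge
  g(1/e)\, P\I(E),
\end{equation*}
and the lemma follows with $C(\alpha) := (1-\alpha^2)/\bigl(4\,g(1/e)\bigr)$.

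The only substantive obstacle is to check that $g(1/e) = 1 - s(1 - 1/e) - e^{-s}$ is strictly positive for every $s \in (0,1)$, since the argument above collapses if $g(1/e) = 0$. Viewing the right-hand side as a function $h(s)$ on $[0,1]$, one observes that $h(0) = h(1) = 0$ while $h''(s) = -e^{-s} < 0$, so $h$ is strictly concave on $[0,1]$ and therefore strictly positive throughout $(0,1)$. This delivers a finite $C(\alpha)$ for every $\alpha \in (-1,1)$ and completes the plan.
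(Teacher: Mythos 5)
Your argument is correct and is essentially the paper's own proof in disguise: the paper lower-bounds $\frac{1-\alpha^2}{4}D^{(\alpha)}$ by $\int_E\bigl[\tfrac{1-\alpha}{2}\beta\I+\tfrac{1+\alpha}{2}\beta\II-(\beta\I)^{\frac{1-\alpha}{2}}(\beta\II)^{\frac{1+\alpha}{2}}\bigr]\dd Q$ and then substitutes $\beta\II=\beta\I/e$ using the same monotonicity, which, once you factor $\beta\I$ out of the integrand, is exactly your $\int_E g(Y)\,\dd P\I\ge g(1/e)\,P\I(E)$; indeed your constant $\bigl(1-s(1-1/e)-e^{-s}\bigr)^{-1}(1-\alpha^2)/4$ with $s=(1+\alpha)/2$ coincides literally with the paper's. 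The only cosmetic differences are that you integrate against $P\I$ rather than $Q$ (and therefore must observe $\int Y\,\dd P\I\le1$ rather than an exact identity), and you verify positivity of the constant by concavity in $s$ rather than in $\alpha$ — both fine.
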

\begin{proof}
  Let $E$ stand for the event
  $
    \left\{
      \beta\I > e\beta\II
    \right\}
  $.
  Since
  \begin{multline*}
    D^{(\alpha)}
    \left(
      P\I\para P\II
    \right)
    =
    \frac{4}{1-\alpha^2}
    \left(
      1
      -
      \int_{\Omega}
        (\beta\I(\omega))^{\frac{1-\alpha}{2}}
        (\beta\II(\omega))^{\frac{1+\alpha}{2}}
      Q(\dd\omega)
    \right)\\
    =
    \frac{4}{1-\alpha^2}
    \int_{\Omega}
      \frac{1-\alpha}{2}
      \beta\I(\omega)
      +
      \frac{1+\alpha}{2}
      \beta\II(\omega)
      -
      (\beta\I(\omega))^{\frac{1-\alpha}{2}}
      (\beta\II(\omega))^{\frac{1+\alpha}{2}}
    Q(\dd\omega)\\
    \ge
    \frac{4}{1-\alpha^2}
    \int_E
      \frac{1-\alpha}{2}
      \beta\I(\omega)
      +
      \frac{1+\alpha}{2}
      \beta\II(\omega)
      -
      (\beta\I(\omega))^{\frac{1-\alpha}{2}}
      (\beta\II(\omega))^{\frac{1+\alpha}{2}}
    Q(\dd\omega)\\
    \ge
    \frac{4}{1-\alpha^2}
    \int_E
      \frac{1-\alpha}{2}
      \beta\I(\omega)
      +
      \frac{1+\alpha}{2e}
      \beta\I(\omega)
      -
      \left(\beta\I(\omega)\right)^{\frac{1-\alpha}{2}}
      \left(\frac{\beta\I(\omega)}{e}\right)^{\frac{1+\alpha}{2}}
    Q(\dd\omega)\\
    =
    \frac{4}{1-\alpha^2}
    \left(
      \frac{1-\alpha}{2}
      +
      \frac{1+\alpha}{2e}
      -
      e^{-\frac{1+\alpha}{2}}
    \right)
    P\I(E)
  \end{multline*}
  (the first inequality follows from the fact
  that the geometric mean never exceeds the arithmetic mean,
  and the second inequality uses the fact that for each $\beta>0$ the function
  \begin{equation*}
    \frac{1-\alpha}{2}
    \beta
    +
    \frac{1+\alpha}{2}
    x
    -
    \beta^{\frac{1-\alpha}{2}}
    x^{\frac{1+\alpha}{2}}
  \end{equation*}
  is decreasing in $x\in[0,\beta/e]$,
  which can be checked by differentiation),
  we can set
  \begin{equation*}
    C
    :=
    \frac{1-\alpha^2}{4}
    \left(
      \frac{1-\alpha}{2}
      +
      \frac{1+\alpha}{2e}
      -
      e^{-\frac{1+\alpha}{2}}
    \right)^{-1}
    >
    0
  \end{equation*}
  (the expression in the parentheses is a positive function of $\alpha\in(-1,1)$
  since it takes value $0$ at $\alpha=-1$ and $\alpha=1$
  and the function is strictly concave).
  \ifFULL\bluebegin
    Let us check the claim that the function in the penultimate displayed equation is decreasing.
    Differentiation gives
    \begin{equation*}
      \left(
        \frac{1-\alpha}{2}
        \beta
        +
        \frac{1+\alpha}{2}
        x
        -
        \beta^{\frac{1-\alpha}{2}}
        x^{\frac{1+\alpha}{2}}
      \right)'
      =
      \frac{1+\alpha}{2}
      -
      \beta^{\frac{1-\alpha}{2}}
      \frac{1+\alpha}{2}
      x^{\frac{-1+\alpha}{2}},
    \end{equation*}
    and the last expression is nonpositive if and only if $x\le\beta$.
  \blueend\fi
  \ifnotJOURNAL
    \qedtext
  \fi
  \ifJOURNAL
    \qed
  \fi
\end{proof}

We will also need the following elementary inequality,
where $U:\bbbr\to\bbbr$ is the truncation function
\begin{equation*}
  U(x)
  :=
  \begin{cases}
    x & \text{if $x\le1$}\\
    1 & \text{otherwise}.
  \end{cases}
\end{equation*}
\begin{lemma}\label{lem:inequality2}
  For each $\gamma\in(0,1)$ there exists $B>1$ such that,
  for all $x>0$,
  \begin{equation}\label{eq:inequality2}
    xU(\ln x) +xU^2(\ln x)
    \le
    B(x-1)
    +
    \frac{B-1}{\gamma}
    \left(
      1-x^{\gamma}
    \right).
  \end{equation}
\end{lemma}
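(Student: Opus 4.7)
The plan is to rewrite the inequality in the equivalent form $B\, c(x) \ge \tilde H(x)$, where $c(x) := (x-1) + (1-x^\gamma)/\gamma$ and $\tilde H(x) := H(x) + (1-x^\gamma)/\gamma$, with $H(x):=xU(\ln x)+xU^2(\ln x)$ the left-hand side of (\ref{eq:inequality2}). I then reduce the problem to a pointwise bound on the ratio $\tilde H/c$. First I observe that $c(1)=c'(1)=0$ while $c''(x)=(1-\gamma)x^{\gamma-2}>0$ on $(0,\infty)$, so $c$ is strictly convex with unique minimum zero at $x=1$; in particular $c(x)>0$ for every $x\ne1$. The target inequality therefore holds trivially at $x=1$ (both sides vanish) and is equivalent, at every $x\ne1$, to $B\ge\tilde H(x)/c(x)$. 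Thus it suffices to prove that
\[
  B^\star := \sup_{x>0,\,x\ne 1}\frac{\tilde H(x)}{c(x)}
\]
is finite, after which any $B>\max(B^\star,1)$ works.

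To establish finiteness of $B^\star$, I note that $\tilde H/c$ is continuous on $(0,\infty)\setminus\{1\}$ and I compute its limits at the three boundary points $0^+$, $1$, and $\infty$. As $x\to 0^+$, both $x\ln x$ and $x(\ln x)^2$ vanish, so $\tilde H(x)\to 1/\gamma$ and $c(x)\to(1-\gamma)/\gamma$, giving the limit $1/(1-\gamma)$. As $x\to 1$, one has $H(x)=x\ln x+x(\ln x)^2$ in a neighborhood of $1$, so $H''(1)=3$; combined with $((1-x^\gamma)/\gamma)''|_{x=1}=1-\gamma$, the second-order Taylor expansions give $\tilde H(x)\sim\tfrac{4-\gamma}{2}(x-1)^2$ and $c(x)\sim\tfrac{1-\gamma}{2}(x-1)^2$, yielding the limit $(4-\gamma)/(1-\gamma)$. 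As $x\to\infty$, $H(x)=2x$ once $\ln x>1$, so the numerator and denominator of $\tilde H/c$ are asymptotic to $2x$ and $x$ respectively, giving the limit $2$. These three finite boundary limits, together with continuity of $\tilde H/c$ on any closed subinterval of $(0,\infty)\setminus\{1\}$ bounded away from $\{0,1,\infty\}$, yield $B^\star<\infty$.

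The main obstacle is the indeterminacy at $x=1$, where both $\tilde H$ and $c$ vanish to second order. It is resolved by the strict convexity of $c$ at $1$: the fact that $c''(1)=1-\gamma>0$ prevents the denominator from collapsing faster than the numerator under second-order Taylor expansion, yielding the finite ratio $(4-\gamma)/(1-\gamma)$. Since this number already exceeds $1$, the constraint $B>1$ required by the statement is automatically satisfied by any $B\ge B^\star$.
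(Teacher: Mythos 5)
Your proof is correct, and it takes a genuinely different route from the paper's. The paper splits the domain at $x=e$ (the kink of $U$) and, on each piece, compares the two sides of the inequality by first matching values and derivatives at the boundary point ($x=1$ in the first case, $x=e$ in the second) and then checking that for $B$ large enough the right-hand side grows faster in the second-derivative sense (giving explicit sufficient bounds $B\ge 5e^{1-\gamma}/(1-\gamma)+1$ and $B\ge(2e-e^{\gamma})/(e-e^{\gamma})$). You instead pass to the ratio $\tilde H/c$, note that it is continuous away from $x=1$, and show that it extends continuously to the three boundary points $0^{+}$, $1$, and $\infty$ (with limits $1/(1-\gamma)$, $(4-\gamma)/(1-\gamma)$, and $2$, all of which I checked); compactness of $[0,\infty]$ then gives a finite supremum $B^{\star}$, and any $B\ge B^{\star}$ works. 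Your approach is cleaner conceptually and characterizes the optimal constant as a supremum, at the price of not producing an explicit numerical bound; the paper's approach is more elementary and gives explicit (non-optimal) constants, which could matter if one wanted to track constants through later estimates. One minor remark: the resolution of the $0/0$ at $x=1$ rests on $c''(1)=1-\gamma>0$ \emph{and} on $\tilde H$ also vanishing to exactly second order there (i.e.\ $\tilde H'(1)=H'(1)+g'(1)=1-1=0$); you implicitly use this but it is worth stating, since if $\tilde H'(1)$ were nonzero the ratio would blow up and the lemma would be false.
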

\begin{proof}
  Let us first consider the case $x\le e$.
  To see that
  \begin{equation*}
    x\ln x +x\ln^2 x
    \le
    B(x-1)
    +
    \frac{B-1}{\gamma}
    \left(
      1-x^{\gamma}
    \right)
  \end{equation*}
  notice that the values and the first derivatives of the two sides of this inequality
  coincide when $x=1$
  (in fact, the coefficient $\frac{B-1}{\gamma}$ was chosen to match the derivatives)
  and that the inequality for the second derivatives,
  \begin{equation*}
    \frac{3+2\ln x}{x}
    \le
    \frac{(B-1)(1-\gamma)}{x^{2-\gamma}},
  \end{equation*}
  holds when $B$ is sufficiently large
  (namely, when $B\ge5e^{1-\gamma}/(1-\gamma)+1$).

  In the case $x\ge e$,
  the inequality becomes
  \begin{equation*}
    2x
    \le
    B(x-1)
    +
    \frac{B-1}{\gamma}
    \left(
      1-x^{\gamma}
    \right);
  \end{equation*}
  since it is true for $x=e$
  (by the previous paragraph),
  it suffices to make sure that the inequality
  between the derivatives of the two sides holds:
  \begin{equation*}
    2
    \le
    B
    -
    (B-1)
    x^{\gamma-1}.
  \end{equation*}
  This can be achieved by making
  $B\ge(2e-e^{\gamma})/(e-e^{\gamma})>2$.
  \ifnotJOURNAL
    \qedtext
  \fi
  \ifJOURNAL
    \qed
  \fi
\end{proof}

Now we have all we need to prove Part \ref{it:c2}
of Theorem \ref{thm:general-criterion}.
Let us first assume that the functions $\beta\I_n$ and $\beta\II_n$
are always positive
and that $E\I_n=E\II_n=\emptyset$, for all $n$.
Our goal is to prove that Sceptic I can force
\begin{equation*}
  \sum_{n=1}^{\infty}
  D^{(\alpha)}
  \left(
    P\I_n \para P\II_n
  \right)
  <
  \infty
  \Longrightarrow
  \liminf_{n\to\infty}\K\II_n<\infty.
\end{equation*}
Substituting
\begin{equation*}
  \gamma:=\frac{1-\alpha}{2},
  \quad
  x
  :=
  \frac{\beta\I_n}{\beta\II_n}
\end{equation*}
in (\ref{eq:inequality2}),
multiplying by $\beta\II_n$, integrating over $Q_n$,
and summing over $n=1,2,\ldots$,
we obtain
\begin{multline}\label{eq:key}
  \sum_{n=1}^{\infty}
  \int
  \beta\I_n
  U
  \left(
    \ln\frac{\beta\I_n}{\beta\II_n}
  \right)
  +
  \beta\I_n
  U^2
  \left(
    \ln\frac{\beta\I_n}{\beta\II_n}
  \right)
  \dd Q_n\\
  \le
  \frac{(B-1)(1+\alpha)}{2}
  \sum_{n=1}^{\infty}
  D^{(\alpha)}
  \left(
    P\I_n \para P\II_n
  \right).
\end{multline}
Let us combine this inequality with:
\begin{itemize}
\item
  Lemma \ref{lem:submartingale}
  applied to Sceptic I and $\xi_n:=U\left(\ln\frac{\beta\I_n}{\beta\II_n}\right)$.
  It is applicable because the inequality $xU(\ln x)\ge x-1$,
  valid for all $x>0$,
  implies
  \begin{equation*}
    \int\xi_n \dd P\I_n
    =
    \int
      \frac{\beta\I_n}{\beta\II_n}
      U\left(\ln\frac{\beta\I_n}{\beta\II_n}\right)
      \beta\II_n
    \dd Q_n
    \ge
    \int
      \left(
        \frac{\beta\I_n}{\beta\II_n}
        -
        1
      \right)
      \beta\II_n
    \dd Q_n
    =
    0
  \end{equation*}
  (and $\xi_n$ is $P\I_n$-integrable since $\xi_n\le1$).
\item
  Lemma \ref{lem:borel-cantelli}
  applied to Sceptic I and $E_n:=\{\beta\I_n>e\beta\II_n\}$.
  It will be applicable by Lemma \ref{lem:inequality1}.
\end{itemize}
We can now see that it suffices to prove that Sceptic I can force
\begin{multline}\label{eq:forcing1}
  \left(
    \sup_N
    \sum_{n=1}^N
    U
    \left(
      \ln\frac{\beta\I_n(\omega_n)}{\beta\II_n(\omega_n)}
    \right)
    <
    \infty
    \;\&\;
    \beta\I_n(\omega_n)\le e\beta\II_n(\omega_n)
    \text{ from some $n$ on}
  \right)\\
  \Longrightarrow
  \liminf_{n\to\infty}\K\II_n<\infty.
\end{multline}
Forcing (\ref{eq:forcing1}) can be achieved by forcing
\begin{equation}\label{eq:forcing2}
  \sup_N
  \sum_{n=1}^N
  \ln\frac{\beta\I_n(\omega_n)}{\beta\II_n(\omega_n)}
  <
  \infty
  \Longrightarrow
  \liminf_{n\to\infty}\K\II_n<\infty,
\end{equation}
and the latter can be done with the strategy
\begin{equation*}
  f\I_n
  :=
  \frac{\beta\II_n}{\beta\I_n}
  f\II_n.
\end{equation*}

It remains to get rid of the assumption
$\beta\I_n>0$, $\beta\II_n>0$, $E\I_n=E\II_n=\emptyset$.
Our argument so far remains valid
if the observation space $\Omega=\Omega_n$
is allowed to depend on $n$
(it can be chosen by Sceptic I's opponents
at any time prior to his move).
In particular, we can set $\Omega_n:=\Omega\setminus(E\I_n\cup E\II_n)$
and assume, without loss of generality,
that $\beta\I_n>0$ and $\beta\II_n>0$ on $\Omega_n$.
This proves Part 2 of Theorem \ref{thm:general-criterion}.

To deduce Part 2 of Theorem \ref{thm:criterion}
from Part 2 of Theorem \ref{thm:general-criterion},
notice that, for all $n$,
$P\I_n(E\II_n)=0$ (as $P\I_n\ll P\II_n$).
Therefore,
$P\I_n(E\I_n\cup E\II_n)=0$,
and mixing (in the sense of Lemma \ref{lem:mix})
any of the strategies for Sceptic I
whose existence is asserted in Part 2 of Theorem \ref{thm:general-criterion}
with the strategy
\begin{equation*}
  f\I_n(\omega)
  :=
  \begin{cases}
    \infty & \text{if $\omega\in E\I_n\cup E\II_n$}\\
    1 & \text{otherwise}
  \end{cases}
\end{equation*}
we obtain a strategy for Sceptic I
satisfying the condition of Part 2 of Theorem \ref{thm:criterion}.

\begin{remark*}
  Part \ref{it:na2} of Theorem \ref{thm:non-asymptotic}
  gives a precise lower bound for the rate of growth of Sceptic I's capital
  in terms of the rate of growth of Sceptic II's capital
  and the cumulative divergence between the Forecasters' predictions.
  Unfortunately,
  our proof of Part \ref{it:c2} of Theorems \ref{thm:criterion} and \ref{thm:general-criterion}
  does not give such a bound;
  the step that prevents us from obtaining such a bound
  is replacing (\ref{eq:forcing1}) with (\ref{eq:forcing2})
  (another such step would have been the use of Doob's martingale convergence theorem,
  as in \citealt{shiryaev:1996},
  but this proof avoids it).
  It remains an open problem whether such a bound exists.
\end{remark*}

\section{The rate of growth of Sceptic II's capital
  in terms of the Kullback--Leibler divergence}
\label{sec:rate}

Suppose Forecaster I is reliable
and Sceptic I invests a part of his capital in strategies
whose existence is asserted in Theorem \ref{thm:non-asymptotic}.
That theorem then gives rather precise bounds for the achievable rate of growth
of $\K\II_n$:
Sceptic II can achieve
\begin{equation}\label{eq:can}
  \ln\K\II_N
  \ge
  \frac{1-\alpha}{2}
  \sum_{n=1}^{N}
  D^{[\alpha]}
  \left(
    P\I_n\para P\II_n
  \right)
  -
  O(1)
\end{equation}
but cannot achieve more than that:
\begin{equation}\label{eq:cannot}
  \ln\K\II_N
  \le
  \frac{1-\alpha}{2}
  \sum_{n=1}^{N}
  D^{[\alpha]}
  \left(
    P\I_n\para P\II_n
  \right)
  +
  O(1).
\end{equation}
The problem with this is that (\ref{eq:can}) and (\ref{eq:cannot})
refer to different $\alpha$s:
$\alpha\in(-1,1)$ in (\ref{eq:can})
and $\alpha<-1$ in (\ref{eq:cannot}).
In this section we will derive similar (albeit cruder) bounds
for the same $\alpha$-divergence,
namely for $\alpha=-1$,
one of the two cases that have been excluded so far.
The \emph{$(-1)$-divergence}, or the \emph{Kullback--Leibler divergence},
is defined by
\begin{equation*}
  D^{(-1)}
  \left(
    P\I\para P\II
  \right)
  :=
  \int_{\Omega}
    \ln\frac{\beta\I(\omega)}{\beta\II(\omega)}
  P\I(\dd\omega),
\end{equation*}
using the notation of (\ref{eq:divergence}).
Somewhat related results have been obtained in algorithmic randomness theory
by \citet{solomonoff:1978};
see also \citet{\VovkPPI}, Theorem 2.2.

\begin{remark*}
  It might be tempting to set $\alpha<-1$ in (\ref{eq:small-alpha}).
  This will not lead to any useful bounds because of the possibility $\K\I_n\to0$.
\end{remark*}

For simplicity,
we will again impose the assumption of ``timidity''
(in a much stronger sense than before)
on Forecaster II:
on the given play of the game,
his predictions should stay within a constant factor of the reliable Forecaster I.
More precisely,
Forecaster II is \emph{$c$-timid}, for a constant $c>1$,
if for all $n$ the ratio $\beta\II_n/\beta\I_n$
(with $0/0$ interpreted as $1$)
is bounded above by $c$ and bounded below by $1/c$.
The value of the constant $c$ is not disclosed to the players,
and the strategies for the Sceptics constructed in this section
will never depend on $c$.

Let $x^+$ stand for $\max(x,0)$.
\begin{theorem}\label{thm:growth}
  Let $c>1$.
  There is a constant $C>0$ depending only on $c$ such that:
  \begin{enumerate}
  \item\label{it:growth1}
    The Sceptics have a joint strategy in the competitive testing protocol
    that guarantees,
    starting from $N=3$,
    \begin{equation}\label{eq:thm-growth-1}
      \ln\K\II_N
      \ge
      \sum_{n=1}^N
      D^{(-1)}
      \left(
        P\I_n\para P\II_n
      \right)
      -
      C
      \sqrt{\frac{N}{\ln\ln N}}
      \left(
        \ln^{+}\K\I_N
        +
        \ln\ln N
      \right)
    \end{equation}
    on the plays where Forecaster II is $c$-timid.
  \item\label{it:growth2}
    Sceptic I has a strategy that guarantees,
    starting from $N=3$,
    \begin{equation}\label{eq:thm-growth-2}
      \ln\K\II_N
      \le
      \sum_{n=1}^N
      D^{(-1)}
      \left(
        P\I_n\para P\II_n
      \right)
      +
      C
      \sqrt{\frac{N}{\ln\ln N}}
      \left(
        \ln^{+}\K\I_N
        +
        \ln\ln N
      \right)
    \end{equation}
    on the plays where Forecaster II is $c$-timid.
  \end{enumerate}
\end{theorem}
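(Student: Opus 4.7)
The plan is to deduce Theorem \ref{thm:growth} from Theorem \ref{thm:non-asymptotic} by taking $\alpha$ close to $-1$ on either side, using Part \ref{it:na1} (a joint strategy, equality) for the lower bound and Part \ref{it:na2} (Sceptic I alone, inequality) for the upper bound. The bridge between the $\alpha$-divergences available in Theorem \ref{thm:non-asymptotic} and the Kullback--Leibler divergence is the following uniform approximation: under $c$-timidity $r_n:=\beta\II_n/\beta\I_n$ lies in $[1/c,c]$, so a Taylor expansion of $r_n^{\pm\epsilon/2}$ around $1$, integration against $P\I_n$, and the logarithm in \eqref{eq:log-divergence} yield
\begin{equation*}
  D^{[-1\pm\epsilon]}(P\I_n\para P\II_n)
  =
  D^{(-1)}(P\I_n\para P\II_n)+O(\epsilon),
\end{equation*}
with constant depending only on $c$; since also $D^{(-1)}(P\I_n\para P\II_n)\le\ln c$ on timid plays, $\sum_{n=1}^N D^{[-1\pm\epsilon]}=\sum_{n=1}^N D^{(-1)}+O(\epsilon N)$.

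For Part \ref{it:growth1}, fix $\epsilon_k:=2^{-k}$ and mixing weights $p_k:=6/(\pi k)^2$, and let the two Sceptics play, with weight $p_k$, the joint strategy from Part \ref{it:na1} of Theorem \ref{thm:non-asymptotic} with $\alpha=-1+\epsilon_k$. Solving \eqref{eq:small-alpha} for the unscaled substrategy capital $\ln\K\II_N^{(k)}$ and inserting the approximation gives
\begin{equation*}
  \ln\K\II_N^{(k)}
  \ge
  \sum_{n=1}^N D^{(-1)}(P\I_n\para P\II_n)
  -C_1\epsilon_k N
  -\tfrac{2-\epsilon_k}{\epsilon_k}\ln\K\I_N^{(k)}.
\end{equation*}
Combining this with $\ln\K\II_N\ge\ln p_k+\ln\K\II_N^{(k)}$ and the mixing bound $\ln\K\I_N^{(k)}\le\ln^{+}\K\I_N+\ln(1/p_k)$ produces, for every $k$,
\begin{equation*}
  \ln\K\II_N
  \ge
  \sum_{n=1}^N D^{(-1)}(P\I_n\para P\II_n)
  -C_1\epsilon_k N
  -\tfrac{2}{\epsilon_k}\bigl(\ln^{+}\K\I_N+\ln(1/p_k)\bigr).
\end{equation*}
Taking the supremum over $k$, in particular the choice $k\asymp\tfrac12\log_2(N/\ln\ln N)$ (so $\epsilon_k\asymp\sqrt{\ln\ln N/N}$ and $\ln(1/p_k)=O(\ln\ln N)$), delivers the error $O(\sqrt{N/\ln\ln N}(\ln^{+}\K\I_N+\ln\ln N))$ claimed in \eqref{eq:thm-growth-1}.

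For Part \ref{it:growth2}, Sceptic I alone plays the analogous mixture of the strategies from Part \ref{it:na2} of Theorem \ref{thm:non-asymptotic} with $\alpha_k=-1-\epsilon_k$. Rearranging \eqref{eq:big-alpha} (valid against any play by Sceptic II) gives, for each $k$,
\begin{equation*}
  \ln\K\II_N\le\tfrac{2+\epsilon_k}{2}\sum_{n=1}^N D^{[-1-\epsilon_k]}(P\I_n\para P\II_n)+\tfrac{2+\epsilon_k}{\epsilon_k}\ln\K\I_N^{(k)},
\end{equation*}
after which the same Taylor approximation, the same mixing bounds, and the same optimization over $k$ produce the matching upper bound \eqref{eq:thm-growth-2}. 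The main obstacle is bookkeeping rather than any new idea: one must verify that the $O(\epsilon)$ error in the divergence approximation is genuinely uniform on $c$-timid plays (the second-order contribution picks up $\int\ln^2 r_n\,\dd P\I_n\le\ln^2 c$), and that the tunings $\epsilon_k=2^{-k}$, $p_k\propto 1/k^2$ genuinely deliver the $\sqrt{N/\ln\ln N}$ rate rather than something weaker.
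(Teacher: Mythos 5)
Your proposal is correct and follows essentially the same route as the paper's own proof: substitute $\alpha$ near $-1$ into Theorem~\ref{thm:non-asymptotic}, use a second-order bound on $r_n^{\pm\epsilon/2}$ together with $\ln x\le x-1$ (valid uniformly on $c$-timid plays) to pass from $D^{[\alpha]}$ to $D^{(-1)}$ with $O(\epsilon)$ error, mix countably many such substrategies with weights $\propto k^{-2}$ in the sense of Lemma~\ref{lem:mix}, and pick the best $\epsilon_k\asymp\sqrt{\ln\ln N/N}$ for each $N$. The only differences from the paper are cosmetic (the paper writes $\alpha=-1\pm2\epsilon$ and chooses $\epsilon_k=\sqrt{\ln k/e^k}$ with $k=\lceil\ln N\rceil$ rather than $\epsilon_k=2^{-k}$, but both tunings give the same $\sqrt{N/\ln\ln N}\,(\ln^+\K\I_N+\ln\ln N)$ rate), so there is nothing substantive to add.
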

Before proving Theorem \ref{thm:growth},
we will state a similar result for the case
where the duration of the game, $N$, is known in advance.
(Theorem \ref{thm:growth} is applicable in this case as well,
but it can be made more precise.)
\begin{proposition}\label{prop:growth}
  Let $c>1$.
  There is a constant $C>0$ depending only on $c$ such that:
  \begin{enumerate}
  \item
    For each $N\in\{2,3,\ldots\}$,
    the Sceptics have a joint strategy that guarantees
    \begin{equation}\label{eq:prop-growth-1}
      \ln\K\II_N
      \ge
      \sum_{n=1}^N
      D^{(-1)}
      \left(
        P\I_n\para P\II_n
      \right)
      -
      \left(
        \sqrt{N} - 1
      \right)
      \left(
        \ln\K\I_N
        +
        C
      \right)
    \end{equation}
    on the plays where Forecaster II is $c$-timid.
  \item
    For each $N\in\{1,2,\ldots\}$,
    Sceptic I has a strategy that guarantees
    \begin{equation}\label{eq:prop-growth-2}
      \ln\K\II_N
      \le
      \sum_{n=1}^N
      D^{(-1)}
      \left(
        P\I_n\para P\II_n
      \right)
      +
      \left(
        \sqrt{N} + 1
      \right)
      \left(
        \ln\K\I_N
        +
        C
      \right)
    \end{equation}
    on the plays where Forecaster II is $c$-timid.
  \end{enumerate}
\end{proposition}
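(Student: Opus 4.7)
The plan is to apply Theorem~\ref{thm:non-asymptotic} with $\alpha$ close to $-1$, specifically $\alpha := -1 \pm 2/\sqrt{N}$, and to extract the two bounds by Taylor-expanding $D^{[\alpha]}$ around $\alpha = -1$ under the $c$-timidity hypothesis.

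The first step is to show that for every $c$-timid pair $(P\I, P\II)$ and every $\alpha$ with $\left|\alpha+1\right|\le\sqrt{2}$,
\begin{equation*}
  \left|
    D^{[\alpha]}(P\I\para P\II) - D^{(-1)}(P\I\para P\II)
  \right|
  \le
  K(c)\,\left|\alpha+1\right|,
\end{equation*}
for some $K(c)$ depending only on $c$. Writing $\alpha=-1+\epsilon$, the Hellinger integral~(\ref{eq:Hellinger-integral-1}) equals $\int(\beta\II/\beta\I)^{\epsilon/2}\dd P\I$, whose integrand is pointwise in $[c^{-\left|\epsilon\right|/2},c^{\left|\epsilon\right|/2}]$; Taylor-expanding the exponential in $\epsilon$ (with remainder controlled by $c$-timidity) gives $1-(\epsilon/2)D^{(-1)}+O(\epsilon^2)$. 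Taking logarithms and multiplying by $4/(\alpha^2-1)=-2/\epsilon+O(1)$ then yields the claim; the case $\alpha=-1-\epsilon$ is analogous.

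For Part~1, set $\alpha := -1 + 2/\sqrt{N}$, which lies in $(-1,1)$ when $N\ge 2$, and apply Part~\ref{it:na1} of Theorem~\ref{thm:non-asymptotic}. Rearranging the identity gives
\begin{equation*}
  \ln\K\II_N
  =
  \frac{1-\alpha}{2}\sum_{n=1}^N D^{[\alpha]}(P\I_n\para P\II_n)
  -
  \frac{1-\alpha}{1+\alpha}\ln\K\I_N,
\end{equation*}
and our choice makes $\frac{1-\alpha}{1+\alpha} = \sqrt{N}-1$. Combined with the trivial estimate $D^{(-1)}(P\I_n\para P\II_n)\le\ln c$ for $c$-timid predictions, the expansion yields $\frac{1-\alpha}{2}\sum_{n=1}^N D^{[\alpha]} = \sum_{n=1}^N D^{(-1)} + O(\sqrt{N})$ with constant depending only on $c$; since $\sqrt{N}/(\sqrt{N}-1)\le 2+\sqrt{2}$ for $N\ge 2$, this $O(\sqrt{N})$ error absorbs into a term of the form $C(\sqrt{N}-1)$, giving~(\ref{eq:prop-growth-1}). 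For Part~2, set $\alpha := -1-2/\sqrt{N}<-1$ and apply Part~\ref{it:na2} of Theorem~\ref{thm:non-asymptotic}; the strategy from that proof depends only on $\alpha$ (hence only on $N$) and on the forecasters' predictions, not on $c$. Since $\frac{1-\alpha}{2}>0$, rearranging gives
\begin{equation*}
  \ln\K\II_N
  \le
  \Bigl(1+\tfrac{1}{\sqrt{N}}\Bigr)
  \sum_{n=1}^N D^{[\alpha]}(P\I_n\para P\II_n)
  +
  (\sqrt{N}+1)\ln\K\I_N,
\end{equation*}
and the same expansion, together with $\sum_{n=1}^N D^{(-1)}\le N\ln c$, delivers~(\ref{eq:prop-growth-2}).

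The main obstacle is the first step: keeping the $c$-dependence of the Taylor remainder in $D^{[\alpha]}=D^{(-1)}+O(\left|\alpha+1\right|)$ explicit and uniform over all $c$-timid pairs, so that the resulting constant $C$ in~(\ref{eq:prop-growth-1}) and~(\ref{eq:prop-growth-2}) depends only on $c$. Everything after that is elementary algebra, and the two choices of $\alpha$ are essentially forced by the requirement to balance the two summands $O(N\left|\alpha+1\right|)$ and $\frac{1\mp\alpha}{\pm(1+\alpha)}\ln\K\I_N$.
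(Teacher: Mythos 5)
Your proposal is correct and follows essentially the same route as the paper: both set $\alpha=-1\pm 2/\sqrt{N}$ (the paper writes $\epsilon=N^{-1/2}$ and $\alpha=-1\pm 2\epsilon$), substitute into Theorem~\ref{thm:non-asymptotic}, and use $c$-timidity to control the Taylor remainder in the Hellinger integral. The only cosmetic difference is that the paper bounds the integrand directly via the explicit inequality $e^x\le 1+x+\tfrac{x^2}{2}e^{x^+}$ (yielding the explicit constant $C=2c\ln^2 c$), whereas you package the same estimate as the lemma $\bigl|D^{[\alpha]}-D^{(-1)}\bigr|\le K(c)\left|\alpha+1\right|$ and leave the constant implicit.
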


\ifFULL\bluebegin
\begin{remark*}
  The proofs of Theorem \ref{thm:growth} and Proposition \ref{prop:growth}
  will show that their Part \ref{it:growth1} only depends on Forecaster II
  being \emph{$c$-timid from above},
  in the sense that the ratio $\beta\II_n/\beta\I_n$ is bounded above by $c$ for all $n$,
  and their Part \ref{it:growth2} only depends on Forecaster II
  being \emph{$c$-timid from below},
  in the sense that the ratio $\beta\II_n/\beta\I_n$ is bounded below by $1/c$ for all $n$.
  [Although the displayed equation after (\ref{eq:growth-intermediate-1})
  seems to contradict this.]
\end{remark*}
\blueend\fi

The intuition behind Theorem \ref{thm:growth} and Proposition \ref{prop:growth}
is that Sceptic II can achieve the growth rate of his logarithmic capital
close to the growth rate of the cumulative Kullback--Leibler divergence
between $P\I_n$ and $P\II_n$,
but cannot achieve a better growth rate.
Theorem \ref{thm:growth} is related to the law of the iterated logarithm,
in that it gives the accuracy of $O(\sqrt{N\ln\ln N})$
for a reliable Forecaster I,
whereas Proposition \ref{prop:growth} is related to the central limit theorem,
in that it gives the accuracy of $O(\sqrt{N})$.

\ifFULL\bluebegin
\begin{remark*}
  It is interesting to consider a weakened notion of reliability,
  in the spirit of Schnorr's (\citeyear{schnorr:1970}, \citeyear{schnorr:1971book}) hierarchy
  of laws of probability
  (see also \citealt{\VovkTVP}).
  Suppose that Forecaster I is \emph{polylog reliable},
  in the sense that we believe that $\K\I_n=(\ln n)^{O(1)}$.
  Theorem \ref{thm:growth} shows that 
  the Sceptics have a joint strategy that guarantees
  \begin{equation*}
    \ln\K\II_N
    \ge
    \sum_{n=1}^N
    D^{(-1)}
    \left(
      P\I_n\para P\II_n
    \right)
    -
    O
    \left(
      \sqrt{N\ln\ln N}
    \right),
  \end{equation*}
  and that Sceptic I has a strategy that guarantees
  \begin{equation*}
    \ln\K\II_N
    \le
    \sum_{n=1}^N
    D^{(-1)}
    \left(
      P\I_n\para P\II_n
    \right)
    +
    O
    \left(
      \sqrt{N\ln\ln N}
    \right).
  \end{equation*}
  \ifFULL\bluebegin
  If we only suppose that Forecaster I is \emph{exp-sqrt reliable},
  in the sense that we believe that $\K\I_n=e^{O(\sqrt{N})}$,
  Theorem \ref{thm:growth} shows
  that the Sceptics have a joint strategy guaranteeing
  \begin{equation*}
    \ln\K\II_N
    \ge
    \sum_{n=1}^N
    D^{(-1)}
    \left(
      P\I_n\para P\II_n
    \right)
    -
    o(N)
  \end{equation*}
  and that Sceptic I has a strategy guaranteeing
  \begin{equation*}
    \ln\K\II_N
    \le
    \sum_{n=1}^N
    D^{(-1)}
    \left(
      P\I_n\para P\II_n
    \right)
    +
    o(N).
  \end{equation*}
  Schnorr's results about SLLN
  (see also \citealt{\VovkTVP})
  suggest that the condition $\K\I_n=e^{O(\sqrt{N})}$
  can be replaced by $\K\I_n=e^{o(N)}$.
  \blueend\fi
\end{remark*}
\blueend\fi

\subsection{Proof of Part \ref{it:growth1} of Theorem \ref{thm:growth}
  and Proposition \ref{prop:growth}}

Substituting $\alpha:=-1+2\epsilon$, with $\epsilon\in(0,1)$,
in (\ref{eq:small-alpha}) gives
\ifFULL\bluebegin
  (since $1-\alpha=2-2\epsilon$, $1+\alpha=2\epsilon$
  and $\alpha^2-1=-4\epsilon(1-\epsilon)$)
\blueend\fi
\begin{equation*}
  \frac{1}{\epsilon}
  \ln\K\I_N
  +
  \frac{1}{1-\epsilon}
  \ln\K\II_N
  =
  \sum_{n=1}^{N}
  D^{[-1+2\epsilon]}
  \left(
    P\I_n\para P\II_n
  \right),
\end{equation*}
\ifFULL\bluebegin
i.e.,
\begin{equation*}
  \frac{1}{1-\epsilon}
  \ln\K\II_N
  \ge
  -
  \sum_{n=1}^{N}
  \frac{4}{4\epsilon(1-\epsilon)}
  \ln
  \int
    \left(
      \beta\I_n
    \right)^{1-\epsilon}
    \left(
      \beta\II_n
    \right)^{\epsilon}
  \dd Q_n
  -
  \frac{1}{\epsilon}
  \ln\K\I_N,
\end{equation*}
\blueend\fi
which can be rewritten as
\begin{equation}\label{eq:growth-intermediate-1}
  \ln\K\II_N
  =
  -
  \frac{1}{\epsilon}
  \sum_{n=1}^{N}
  \ln
  \int
    \left(
      \frac{\beta\II_n}{\beta\I_n}
    \right)^{\epsilon}
  \dd P\I_n
  -
  \frac{1-\epsilon}{\epsilon}
  \ln\K\I_N.
\end{equation}
The inequality $e^x\le1+x+\frac{x^2}{2}e^{x^+}$ gives
\begin{align*}
  \left(
    \frac{\beta\II_n}{\beta\I_n}
  \right)^{\epsilon}
  &=
  \exp
  \left(
    \epsilon
    \ln\frac{\beta\II_n}{\beta\I_n}
  \right)\\
  &\le
  1
  +
  \epsilon
  \ln\frac{\beta\II_n}{\beta\I_n}
  +
  \frac12
  \epsilon^2
  \ln^2\frac{\beta\II_n}{\beta\I_n}
  \exp
  \left(
    \epsilon
    \ln^{+}\frac{\beta\II_n}{\beta\I_n}
  \right)\\
  &\le
  1
  +
  \epsilon
  \ln\frac{\beta\II_n}{\beta\I_n}
  +
  \frac12
  \epsilon^2
  c^{\epsilon}\ln^2c.
\end{align*}
This further implies
\begin{align}
  -
  \frac{1}{\epsilon}
  \sum_{n=1}^{N}
  &\ln
  \int
    \left(
      \frac{\beta\II_n}{\beta\I_n}
    \right)^{\epsilon}
  \dd P\I_n\notag\\
  &\ge
  -
  \frac{1}{\epsilon}
  \sum_{n=1}^{N}
  \ln
  \left(
    1
    -
    \epsilon
    D^{(-1)}
    \left(
      P\I_n\para P\II_n
    \right)
    +
    \frac12
    \epsilon^2
    c^{\epsilon}\ln^2c
  \right)\label{eq:before}\\
  &\ge
  -
  \frac{1}{\epsilon}
  \sum_{n=1}^{N}
  \left(
    -
    \epsilon
    D^{(-1)}
    \left(
      P\I_n\para P\II_n
    \right)
    +
    \frac12
    \epsilon^2
    c^{\epsilon}\ln^2c
  \right)\label{eq:after}\\
  &=
  \sum_{n=1}^{N}
  D^{(-1)}
  \left(
    P\I_n\para P\II_n
  \right)
  -
  \frac12
  N
  \epsilon
  c^{\epsilon}\ln^2c\notag
\end{align}
(the transition from (\ref{eq:before}) to (\ref{eq:after})
uses the inequality $\ln x\le x-1$, valid for $x\ge0$;
the expression in the parentheses in (\ref{eq:before}) is nonnegative
because of its provenance).
Plugging the last inequality into (\ref{eq:growth-intermediate-1}),
we obtain
\begin{equation}\label{eq:growth-intermediate-2}
  \ln\K\II_N
  \ge
  \sum_{n=1}^{N}
  D^{(-1)}
  \left(
    P\I_n\para P\II_n
  \right)
  -
  \frac12
  N
  \epsilon
  c^{\epsilon}\ln^2c
  -
  \frac{1-\epsilon}{\epsilon}
  \ln\K\I_N.
\end{equation}

If $N$ is known in advance (but $c$ and $\K\I_N$ are not),
we can set $\epsilon:=N^{-1/2}$ in (\ref{eq:growth-intermediate-2}),
which gives
\begin{equation*}
  \ln\K\II_N
  \ge
  \sum_{n=1}^{N}
  D^{(-1)}
  \left(
    P\I_n\para P\II_n
  \right)
  -
  \frac{1}{2}
  \sqrt{N}
  c^{N^{-1/2}}\ln^2c
  -
  \left(
    \sqrt{N}-1
  \right)
  \ln\K\I_N;
\end{equation*}
this proves (\ref{eq:prop-growth-1}) with, e.g., $C:=2c\ln^2c$
(in fact, we can take $C$ arbitrarily close to $\frac12\ln^2c$
if we only want (\ref{eq:prop-growth-1}) to hold for sufficiently large $N$).

\ifFULL\bluebegin
\begin{remark*}
  Our estimates are rather crude.
  For example,
  taking into account another term in Taylor's expansion of $e^x$
  (i.e., using the inequality $e^x\le1+x+\frac{x^2}{2}+\frac{x^3}{3}e^{x^+}$
  instead of $e^x\le1+x+\frac{x^2}{2}e^{x^+}$)
  would lead to the extra term $(\epsilon/2)\sum_{n=1}^N\int\ln^2(\beta\II_n/\beta\I_n)\dd P\I_n$
  in (\ref{eq:growth-intermediate-2}).
  Since $\int\ln^2(\beta\II_n/\beta\I_n)\dd P\I_n\approx D^{(-1)}(P\I_n\para P\II_n)$
  when $P\I_n$ is close to $P\II_n$
  (in the sense of the remark on p.~\pageref{p:renyi}),
  this is likely to lead to more precise
  (although perhaps less simple)
  versions of our results.
\end{remark*}
\blueend\fi

In the case of Theorem \ref{thm:growth},
where $N$ is unknown,
we will use the discrete form (\citealt{\VovkTVP}, Theorem 1)
of Ville's (\citeyear{ville:1939}) method of proving the law of the iterated logarithm
(however, without worrying about constant factors).
The method is based on the following simple corollary of Lemma \ref{lem:mix}.
If $\SSS_2,\SSS_3,\ldots$ is a sequence of strategies for Sceptic
and $p_2,p_3,\ldots$ is a sequence of positive weights summing to $1$,
there is a strategy $\SSS$ for Sceptic ensuring
$
  \ln\K^{\SSS}_N
  \ge
  \ln\K^{\SSS_k}_N
  +
  \ln p_k
$
for all $k=2,3,\ldots$\,.
In particular, taking $p_k\propto k^{-2}$, $k=2,3,\ldots$,
we obtain
$
  \ln\K^{\SSS}_N
  \ge
  \ln\K^{\SSS_k}_N
  -
  2\ln k
$
for $k\ge2$.

Let $\epsilon_2,\epsilon_3,\ldots$ be a sequence of positive numbers,
to be specified later on.
Since for each $k=2,3,\ldots$
the Sceptics can ensure (\ref{eq:growth-intermediate-2}) for $\epsilon:=\epsilon_k$
(using simple strategies (\ref{eq:sceptic-1})--(\ref{eq:sceptic-2})
depending only on the Forecasters' predictions),
they can also ensure,
for all $k=2,3,\ldots$,
\begin{equation}\label{eq:growth-intermediate-3}
  \ln\K\II_N
  +
  2\ln k
  \ge
  \sum_{n=1}^{N}
  D^{(-1)}
  \left(
    P\I_n\para P\II_n
  \right)
  -
  C_1
  N
  \epsilon_k
  -
  \frac{1-\epsilon_k}{\epsilon_k}
  \left(
    \ln\K\I_N
    +
    2\ln k
  \right)
\end{equation}
where $C_1$ (as well as $C_2$ to $C_6$ below)
is a constant depending only on $c$.
Weakening (\ref{eq:growth-intermediate-3}) to
\begin{equation*}
  \ln\K\II_N
  \ge
  \sum_{n=1}^{N}
  D^{(-1)}
  \left(
    P\I_n\para P\II_n
  \right)
  -
  C_1
  N
  \epsilon_k
  -
  \frac{1}{\epsilon_k}
  \ln\K\I_N
  -
  \frac{2\ln k}{\epsilon_k}
\end{equation*}
and setting
\begin{equation}\label{eq:setting}
  k
  :=
  \lceil\ln N\rceil,
  \quad
  \epsilon_k
  :=
  \sqrt{\frac{\ln k}{e^k}}
\end{equation}
(so that $\epsilon_k$ coincides with $\sqrt{\ln\ln N/N}$
to within a constant factor),
we further obtain
\begin{equation*}
  \ln\K\II_N
  \ge
  \sum_{n=1}^{N}
  D^{(-1)}
  \left(
    P\I_n\para P\II_n
  \right)
  -
  C_2
  \sqrt{N\ln\ln N}
  -
  C_3
  \sqrt{\frac{N}{\ln\ln N}}
  \ln^{+}\K\I_N
\end{equation*}
for $N\ge3$.
This essentially coincides with (\ref{eq:thm-growth-1}).

\begin{remark*}
  The strategies for Sceptic II constructed in our proof of Part \ref{it:growth1}
  of Theorem \ref{thm:growth} and Proposition \ref{prop:growth}
  was somewhat complex,
  especially in the case of Theorem \ref{thm:growth}.
  In the spirit of \citet{solomonoff:1978},
  we could take the simple ``likelihood ratio'' strategy
  $f\II_n:=\beta\I_n/\beta\II_n$.
  The expected value of $\ln f\II_n$ with respect to $P\I_n$
  is $D^{(-1)}(P\I_n\para P\II_n)$,
  and according to standard results of game-theoretic probability
  Sceptic I can become infinitely rich unless
  $
    \ln\K\II_N
    \approx
    \sum_{n=1}^N
    D^{(-1)}
    \left(
      P\I_n\para P\II_n
    \right)
  $.
  The law of the iterated logarithm
  (see, e.g., \citealt{shafer/vovk:2001}, Chapter 5)
  will give a result similar to Part \ref{it:growth1}
  of Theorem \ref{thm:growth},
  and the weak law of large numbers
  (in the form of Proposition 6.1 in \citealt{shafer/vovk:2001})
  or the central limit theorem
  (\citealt{shafer/vovk:2001}, Chapters 6--7)
  will give results similar to Part \ref{it:growth1}
  of Proposition \ref{prop:growth}.
  An advantage of the proofs given in this subsection
  is that they show the dependence of Sceptic II's capital on Sceptic I's capital;
  it remains to be seen whether such explicit dependence
  can be achieved using limit theorems of game-theoretic probability.
\end{remark*}

\subsection{Proof of Part \ref{it:growth2} of Theorem \ref{thm:growth}
  and Proposition \ref{prop:growth}}

Substituting $\alpha:=-1-2\epsilon$, with $\epsilon>0$,
in (\ref{eq:big-alpha}) gives
\ifFULL\bluebegin
  (since $1-\alpha=2+2\epsilon$, $1+\alpha=-2\epsilon$
  and $\alpha^2-1=-4\epsilon(1+\epsilon)$)
\blueend\fi
\begin{equation*}
  -\frac{1}{\epsilon}
  \ln\K\I_N
  +
  \frac{1}{1+\epsilon}
  \ln\K\II_N
  \le
  \sum_{n=1}^{N}
  D^{[-1-2\epsilon]}
  \left(
    P\I_n\para P\II_n
  \right),
\end{equation*}
\ifFULL\bluebegin
i.e.,
\begin{equation*}
  \frac{1}{1+\epsilon}
  \ln\K\II_N
  \le
  \frac{4}{4\epsilon(1-\epsilon)}
  \sum_{n=1}^{N}
  \ln
  \int
  \left(
    \beta\I_n
  \right)^{1+\epsilon}
  \left(
    \beta\II_n
  \right)^{-\epsilon}
  \dd Q_n
  +
  \frac{1}{\epsilon}
  \ln\K\I_N,
\end{equation*}
\blueend\fi
or, equivalently,
\begin{equation}
  \ln\K\II_N
  \le
  \frac{1}{\epsilon}
  \sum_{n=1}^{N}
  \ln
  \int
  \left(
    \frac{\beta\I_n}{\beta\II_n}
  \right)^{\epsilon}
  \dd P\I_n
  +
  \frac{1+\epsilon}{\epsilon}
  \ln\K\I_N.
\end{equation}
Analogously to the transition
from (\ref{eq:growth-intermediate-1}) to (\ref{eq:growth-intermediate-2})
but now using
\begin{equation*}
  \left(
    \frac{\beta\I_n}{\beta\II_n}
  \right)^{\epsilon}
  \le
  1
  +
  \epsilon
  \ln\frac{\beta\I_n}{\beta\II_n}
  +
  \frac12
  \epsilon^2
  c^{\epsilon}\ln^2c
\end{equation*}
and
\begin{align*}
  \frac{1}{\epsilon}
  \sum_{n=1}^{N}
  \ln
  \int
  \left(
    \frac{\beta\I_n}{\beta\II_n}
  \right)^{\epsilon}
  \dd P\I_n
  &\le
  \frac{1}{\epsilon}
  \sum_{n=1}^{N}
  \ln
  \left(
    1
    +
    \epsilon
    D^{(-1)}
    \left(
      P\I_n\para P\II_n
    \right)
    +
    \frac12
    \epsilon^2
    c^{\epsilon}\ln^2c
  \right)\\
  &\le
  \sum_{n=1}^{N}
  D^{(-1)}
  \left(
    P\I_n\para P\II_n
  \right)
  +
  \frac12
  N
  \epsilon
  c^{\epsilon}\ln^2c,
\end{align*}
we obtain
\begin{equation*}
  \ln\K\II_N
  \le
  \sum_{n=1}^{N}
  D^{(-1)}
  \left(
    P\I_n\para P\II_n
  \right)
  +
  \frac12
  N
  \epsilon
  c^{\epsilon}\ln^2c
  +
  \frac{1+\epsilon}{\epsilon}
  \ln\K\I_N.
\end{equation*}
If $N$ is known in advance,
setting $\epsilon:=N^{-1/2}$\Extra{\ (which implies $\frac{1+\epsilon}{\epsilon}=\sqrt{N}+1$)}
gives
\begin{equation*}
  \ln\K\II_N
  \le
  \sum_{n=1}^{N}
  D^{(-1)}
  \left(
    P\I_n\para P\II_n
  \right)
  +
  \left(
    C_4
    +
    \ln\K\I_N
  \right)
  \left(
    \sqrt{N}+1
  \right)
\end{equation*}
and so completes the proof of Proposition \ref{prop:growth}.
(We can take $C:=c\ln^2c$ in (\ref{eq:prop-growth-2}),
or, if we are interested in (\ref{eq:prop-growth-2}) holding from some $N$ on,
$C\approx\frac12\ln^2c$.)

As for Theorem \ref{thm:growth},
we now have
\begin{equation*}
  \ln\K\II_N
  \le
  \sum_{n=1}^{N}
  D^{(-1)}
  \left(
    P\I_n\para P\II_n
  \right)
  +
  \frac12
  N
  \epsilon_k
  c^{\epsilon_k}
  \ln^2c
  +
  \frac{1+\epsilon_k}{\epsilon_k}
  \left(
    \ln\K\I_N
    +
    2\ln k
  \right).
\end{equation*}
Setting, as before, (\ref{eq:setting}),
we now obtain\Extra{\
$\frac{1+\epsilon_k}{\epsilon_k}=1+\frac{1}{\epsilon_k}\le1+\sqrt{N/\ln\ln N}$ and}
\begin{multline*}
  \ln\K\II_N
  \le
  \sum_{n=1}^{N}
  D^{(-1)}
  \left(
    P\I_n\para P\II_n
  \right)\\
  +
  C_5
  \sqrt{N\ln\ln N}
  +
  C_6
  \left(
    1
    +
    \sqrt{\frac{N}{\ln\ln N}}
  \right)
  \left(
    \ln^{+}\K\I_N
    +
    2\ln\ln N
  \right),
\end{multline*}
which completes the proof of Theorem \ref{thm:growth}.

\ifFULL\bluebegin
\section{``Unpredictable'' results}
\label{sec:unpredictable}

The results of the previous sections were given in terms
of divergences between $P\I_n$ and $P\II_n$;
since those divergences do not depend on the realized outcome $\omega_n$,
we can say
(following the standard nomenclature of the theory of martingales)
that they are given in ``predictable'' terms.
In this section we will give two similar results
that involve $\beta\I_n(\omega_n)$ and $\beta\II_n(\omega_n)$
instead of divergences between $P\I_n$ and $P\II_n$.
[The second of these results,
and even its algorithmic version,
has turned out to be wrong.]

The first result is the ``unpredictable'' counterpart
of Theorem \ref{thm:growth};
it estimates the growth rate of Sceptic II's capital.
\begin{proposition}\label{prop:growth-unpr}
  In the competitive testing protocol:
  \begin{enumerate}
  \item\label{it:prop-growth-unpr-1}
    Sceptic II has a strategy guaranteeing
    \begin{equation}\label{eq:prop-growth-unpr-1}
      \ln\K\II_N
      =
      \sum_{n=1}^N
      \ln\frac{\beta\I_n(\omega_n)}{\beta\II_n(\omega_n)}.
    \end{equation}
  \item\label{it:prop-growth-unpr-2}
    Sceptic I has a strategy guaranteeing
    \begin{equation}\label{eq:prop-growth-unpr-2}
      \ln\K\II_N
      =
      \sum_{n=1}^N
      \ln\frac{\beta\I_n(\omega_n)}{\beta\II_n(\omega_n)}
      +
      \ln\K\I_N.
    \end{equation}
  \end{enumerate}
\end{proposition}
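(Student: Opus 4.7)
The plan is to exhibit two closed-form strategies built directly from the likelihood ratio $\beta\I_n/\beta\II_n$; both parts will reduce to a telescoping product and require no martingale machinery.

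For Part \ref{it:prop-growth-unpr-1}, I would have Sceptic II play the Solomonoff-style ``likelihood ratio'' strategy $f\II_n := \beta\I_n/\beta\II_n$. Admissibility is immediate, since $\int f\II_n \dd P\II_n = \int (\beta\I_n/\beta\II_n) \beta\II_n \dd Q_n = \int \beta\I_n \dd Q_n = 1$. The capital update then gives $\K\II_n = \K\II_{n-1}(\beta\I_n/\beta\II_n)(\omega_n)$, which telescopes to $\K\II_N=\prod_{n=1}^N(\beta\I_n/\beta\II_n)(\omega_n)$; taking logarithms yields (\ref{eq:prop-growth-unpr-1}) on the nose.

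For Part \ref{it:prop-growth-unpr-2}, the key observation is that in the competitive testing protocol Sceptic I moves \emph{after} Sceptic II and therefore sees Sceptic II's declared $f\II_n$. I would let Sceptic I play the adaptive cross strategy $f\I_n := (\beta\II_n/\beta\I_n) f\II_n$, an online version of (\ref{eq:strategy-2}); this is admissible because $\int f\I_n \dd P\I_n = \int f\II_n \dd P\II_n = 1$. The two capital updates become
\begin{equation*}
  \K\I_n = \K\I_{n-1} (\beta\II_n/\beta\I_n)(\omega_n)\,f\II_n(\omega_n),
  \qquad
  \K\II_n = \K\II_{n-1}\,f\II_n(\omega_n),
\end{equation*}
so the ratio $\K\I_n/\K\II_n$ satisfies a telescoping product that collapses to $\prod_{n=1}^N (\beta\II_n/\beta\I_n)(\omega_n)$, independently of Sceptic II's choice of $f\II_n$. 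Taking logarithms and rearranging delivers (\ref{eq:prop-growth-unpr-2}); since the $f\II_n$ factors cancel exactly, the identity is enforced by Sceptic I alone.

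The main obstacle is purely one of bookkeeping on the exceptional steps where $\beta\I_n(\omega_n)=0$ or $\beta\II_n(\omega_n)=0$: the likelihood ratios are then undefined, and the capitals may involve indeterminate forms such as $0\cdot\infty$. I would extend the conventions adopted in Sect.~\ref{sec:non-asymptotic} (namely $0/0:=1$, $0\cdot\infty:=0$) and, as in Theorem~\ref{thm:non-asymptotic}, declare both sides of (\ref{eq:prop-growth-unpr-1}) and (\ref{eq:prop-growth-unpr-2}) to hold vacuously when they reduce to $\infty-\infty$. Once these conventions are in place the argument is purely algebraic, with no appeal to concavity, martingale convergence, or the preceding divergence estimates.
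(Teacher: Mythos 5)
Your proposal uses exactly the same two strategies as the paper's own proof: $f\II_n := \beta\I_n/\beta\II_n$ for Part~\ref{it:prop-growth-unpr-1} and the adaptive cross strategy $f\I_n := (\beta\II_n/\beta\I_n)\,f\II_n$ for Part~\ref{it:prop-growth-unpr-2}, with the same telescoping argument. The only difference is that you spell out the admissibility checks and the $0/0$ conventions explicitly, which the paper leaves implicit.
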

\begin{proof}
  To see that Part \ref{it:prop-growth-unpr-1} holds true,
  consider Sceptic II's strategy $f\II_n:=\beta\I_n/\beta\II_n$.
  For Part \ref{it:prop-growth-unpr-2},
  consider Sceptic I's strategy $f\I_n:=f\II_n\beta\II_n/\beta\I_n$.
  \ifnotJOURNAL
    \qedtext
  \fi
  \ifJOURNAL
    \qed
  \fi
\end{proof}

Proposition \ref{prop:growth-unpr} is almost trivial,
but still in some respects it is much more precise than Theorem \ref{thm:growth}:
Sceptic II does not need Sceptic I's help to achieve (\ref{eq:prop-growth-unpr-1}),
which, unlike (\ref{eq:thm-growth-1}), is a precise equality;
(\ref{eq:prop-growth-unpr-2}), unlike (\ref{eq:thm-growth-2}),
quantifies the rate at which Sceptic II's capital tends to infinity
when $\K\I_n\to\infty$ but the Sceptics' predictions agree
(albeit in a different sense).
This is the price we have to pay for predictability.

Theorem \ref{thm:criterion} holds trivially
when 
\begin{equation}\label{eq:expression1}
  D^{(\alpha)}
  \left(
    P\I_n\para P\II_n
  \right)
\end{equation}
is replaced by
\begin{equation}\label{eq:expression2}
  \ln\frac{\beta\I_n(\omega_n)}{\beta\II_n(\omega_n)}.
\end{equation}
There is, however, a major difference
between expressions (\ref{eq:expression1}) and (\ref{eq:expression2}),
apart from their predictability or lack of it:
only (\ref{eq:expression1}) can be considered
to be a measure of difference between the predictions.
The following variation of Theorem \ref{thm:criterion}
involves an unpredictable measure of difference.
\begin{proposition}\label{prop:criterion}
  In the competitive testing protocol:
  \begin{enumerate}
  \item\label{it:prop-c1}
    The Sceptics have a joint strategy guaranteeing
    that at least one of them will become infinitely rich
    if
    \begin{equation}\label{eq:prop-far}
      \sum_{n=1}^{\infty}
      \left(
        \frac{\beta\I_n(\omega_n)}{\beta\II_n(\omega_n)}
        -
        1
      \right)^2
      =
      \infty.
    \end{equation}
  \item\label{it:prop-c2}
    Sceptic I has a strategy guaranteeing
    that he will become infinitely rich
    if
    \begin{equation}\label{eq:prop-close}
      \sum_{n=1}^{\infty}
      \left(
        \frac{\beta\I_n(\omega_n)}{\beta\II_n(\omega_n)}
        -
        1
      \right)^2
      <
      \infty
    \end{equation}
    and Sceptic II becomes infinitely rich.
  \end{enumerate}
\end{proposition}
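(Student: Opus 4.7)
The plan is to exploit mutually inverse likelihood-ratio strategies. Write $r_n(\omega) := \beta\I_n(\omega)/\beta\II_n(\omega)$ (with $0/0:=1$) and $\xi_n := r_n(\omega_n) - 1$, so the hypothesis is about the realized sum $\sum\xi_n^2$. First I would verify that $f\II_n := r_n$ and $f\I_n := 1/r_n$ are valid moves for Sceptic II and Sceptic I respectively (their integrals against $P\II_n$ and $P\I_n$ both equal $1$) and that they yield mutually inverse capitals $\K\I_N\K\II_N = 1$. This already reduces ``at least one becomes infinitely rich'' in Part \ref{it:prop-c1} to showing that the partial sums $\sum_{n=1}^N\ln(1+\xi_n)$ do not stay in any bounded interval.

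For Part \ref{it:prop-c1}, in the bounded-$\xi_n$ regime ($\xi_n\in(-1,M]$) elementary calculus gives $\ln(1+x) \le x - c_M x^2$ with $c_M := 1/(2(1+M))>0$, and hence $\sum\ln(1+\xi_n) \le \sum\xi_n - c_M\sum\xi_n^2$. Since $\xi_n$ is a $P\II_n$-martingale difference, Lemma \ref{lem:martingale} lets Sceptic II control $\sum\xi_n$ on the event that the predictable quadratic variation $\sum E_{P\II_n}[\xi_n^2]$ is finite; divergence of $\sum\xi_n^2$ then forces $\ln\K\II_N \to -\infty$ and hence $\K\I_N \to \infty$. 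In the complementary case $\sum E_{P\II_n}[\xi_n^2] = 2\sum D^{(-3)}(P\I_n\para P\II_n) = \infty$, I would try to bootstrap to some $\alpha$-divergence with $\alpha\in(-1,1)$ being infinite and then invoke the first part of Theorem \ref{thm:general-criterion}. Unbounded-$\xi_n$ steps (where $r_n$ is very large) are directly profitable for Sceptic II's $f\II_n = r_n$ and can be absorbed by mixing in truncated variants $r_n\wedge 2^k$ with weights $\propto 2^{-k}$.

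For Part \ref{it:prop-c2}, I would have Sceptic I play the ``follow-the-leader'' strategy $f\I_n := f\II_n/r_n$, which is valid because $\int(f\II_n/r_n)\,\dd P\I_n = \int f\II_n\,\dd P\II_n = 1$ and yields $\ln\K\I_N = \ln\K\II_N - \sum\ln(1+\xi_n)$. Under $\sum\xi_n^2<\infty$ one has $\xi_n\to 0$, so eventually $|\xi_n|\le 1/2$ and $\ln(1+\xi_n) = \xi_n - \xi_n^2/2 + O(\xi_n^3)$, giving $\sum\ln(1+\xi_n) = \sum\xi_n + O(1)$. Given $\K\II_N\to\infty$, it then suffices to bound $\sum\xi_n$ above, which I would try to extract by applying Lemma \ref{lem:martingale} to Sceptic II with the martingale difference $\xi_n$.

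The main obstacle in both parts is bridging the realized quadratic variation $\sum\xi_n^2(\omega_n)$ in the hypothesis with the predictable one $\sum E_{P\II_n}[\xi_n^2] = 2\sum D^{(-3)}(P\I_n\para P\II_n)$ that Lemma \ref{lem:martingale} and the rest of the game-theoretic machinery naturally deliver. Because the $\chi^2$-divergence sits at $\alpha=-3$, outside the range of Theorem \ref{thm:general-criterion}, there is no obvious conversion between the two sums; in particular it is not clear that Sceptic II can force the realized sum to remain summable precisely when the predictable sum is. This is where I expect a proof along these lines to break down, and it is plausible that the statement as given cannot be proved without strengthening the hypothesis to a predictable condition.
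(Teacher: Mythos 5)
Part \ref{it:prop-c2} is in fact \emph{false} as stated, and the paper itself flags this (the authorial note preceding the proposition reads ``the second of these results, and even its algorithmic version, has turned out to be wrong''; the proof sketch stops with ``the other part is in fact wrong: see below''). Your closing misgiving is exactly right: the hypothesis (\ref{eq:prop-close}) is a realized, non-predictable condition, and there is no way to convert it into control over the predictable quadratic variation that Lemma~\ref{lem:martingale} needs. Here is a concrete counterexample. Take $\Omega=\{0,1\}$, let Forecaster~I always predict $P\I_n(\{1\})=1$, let Forecaster~II predict $P\II_n(\{1\})=1-\tfrac{1}{n+1}$, and let Reality always play $\omega_n=1$. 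Writing $r_n:=\beta\I_n(\omega_n)/\beta\II_n(\omega_n)$, we have $r_n=(n+1)/n$ so $\sum(r_n-1)^2=\sum n^{-2}<\infty$ and (\ref{eq:prop-close}) holds. Yet every legal move $f\I_n$ for Sceptic~I must have $f\I_n(1)=1$ (because $P\I_n$ is a point mass at $1$), so $\K\I_N\equiv1$; meanwhile Sceptic~II playing the likelihood ratio $f\II_n:=\beta\I_n/\beta\II_n$ attains $\K\II_N=N+1\to\infty$. The mechanism is precisely the one you worried about: $\sum(r_n-1)^2<\infty$ is compatible with $\sum(r_n-1)=-\infty$ (here $\sum\ln r_n=+\infty$ while $\sum\ln(1/r_n)\to-\infty$ in the symmetric version with the roles of the Forecasters swapped), and Lemma~\ref{lem:martingale} gives nothing because its premise is $\sum\int\xi_n^2\,\dd P\II_n<\infty$, which does not follow from (\ref{eq:prop-close}).

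For Part \ref{it:prop-c1}, your choice of mutually inverse strategies $f\I_n=1/r_n$, $f\II_n=r_n$ is exactly the one that destroys the information you need: it pins $\K\I_N\K\II_N\equiv1$, after which you are forced into the realized-vs-predictable detour (and the fallback of bootstrapping from $D^{(-3)}=\infty$ to $D^{(\alpha)}=\infty$ for $\alpha\in(-1,1)$ cannot work: $D^{(\alpha)}\le 4/(1-\alpha^2)$ always for $|\alpha|<1$, whereas $D^{(-3)}$ is unbounded). The paper's proof instead picks strategies whose \emph{product of gains is a realized quantity bounded below}: set
\begin{equation*}
  f\I_n:=\frac{1+\beta\II_n/\beta\I_n}{2},
  \qquad
  f\II_n:=\frac{1+\beta\I_n/\beta\II_n}{2}.
\end{equation*}
Both integrate to $1$, and by the AM--GM inequality
\begin{equation*}
  f\I_n(\omega)\,f\II_n(\omega)
  =\frac{2+r_n(\omega)+r_n(\omega)^{-1}}{4}\ge 1,
\end{equation*}
so $\K\I_N\K\II_N$ is nondecreasing and
\begin{equation*}
  \ln\bigl(\K\I_N\K\II_N\bigr)
  =\sum_{n=1}^{N}\ln\frac{2+r_n(\omega_n)+r_n(\omega_n)^{-1}}{4}.
\end{equation*}
Each summand $g(r):=\ln\frac{2+r+1/r}{4}$ is nonnegative, vanishes only at $r=1$, behaves like $\tfrac14(r-1)^2$ near $1$, and is bounded below by a positive constant for $r\notin[\tfrac12,2]$; hence (\ref{eq:prop-far}) forces $\K\I_N\K\II_N\to\infty$ and so at least one Sceptic has $\limsup\K_N=\infty$. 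This is entirely elementary and never touches a martingale lemma or a predictable quantity, which is what makes it work for an unpredictable hypothesis — and which your inverse pair structurally cannot replicate.
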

\begin{remark*}
  The
  \begin{equation}\label{eq:approximation}
    \left(
      \frac{\beta\I_n(\omega_n)}{\beta\II_n(\omega_n)}
      -
      1
    \right)^2
  \end{equation}
  in (\ref{eq:prop-far}) and (\ref{eq:prop-close})
  can be replaced by
  \begin{equation*}
    \left(
      \frac{\beta\II_n(\omega_n)}{\beta\I_n(\omega_n)}
      -
      1
    \right)^2
  \end{equation*}
  and can be replaced by
  \begin{equation*}
    \frac
    {
      \left(
        \beta\I_n(\omega_n)-\beta\II_n(\omega_n)
      \right)^2
    }
    {
      \beta\I_n(\omega_n) \beta\II_n(\omega_n)
    }
  \end{equation*}
  (since the convergence of any of the three corresponding series implies
  $\beta\I_n(\omega_n)/\beta\II_n(\omega_n)\to1$
  and so the convergence of the other two series).
\end{remark*}
\begin{Proof}{of Proposition \ref{prop:criterion}}
  To prove Part \ref{it:prop-c1},
  we make Sceptic I play the strategy
  \begin{equation*}
    f\I_n
    :=
    \frac
    {
      1 + \frac{\beta\II_n}{\beta\I_n}
    }
    {
      2
    }
  \end{equation*}
  and make Sceptic II play the strategy
  \begin{equation*}
    f\II_n
    :=
    \frac
    {
      1 + \frac{\beta\I_n}{\beta\II_n}
    }
    {
      2
    }.
  \end{equation*}
  Multiplying these equations and taking the logarithm,
  we obtain
  \begin{equation*}
    \ln f\I_n
    +
    \ln f\II_n
    =
    \ln
    \frac
    {
      2 + \frac{\beta\I_n}{\beta\II_n} + \frac{\beta\II_n}{\beta\I_n}
    }
    {
      4
    }
    \ge
    0;
  \end{equation*}
  since this is asymptotically (as $\beta\II_n/\beta\I_n$ approaches $1$)
  close to (\ref{eq:approximation}),
  the proof of Part \ref{it:prop-c1} is complete.

  The other part is in fact wrong: see below.
  \ifnotJOURNAL
    \qedtext
  \fi
  \ifJOURNAL
    \qed
  \fi
\end{Proof}

  This is the algorithmic version of Proposition \ref{prop:criterion}
  (in the terminology of \citealt{\VovkDAN}):
  \begin{proposition}
    Let $P$ and $Q$ be computable measures on $\{0,1\}^{\infty}$.
    Suppose an infinite binary sequence $\omega$
    is random w.r.\ to $P$.
    It is random w.r.\ to $Q$ if and only if
    \begin{equation}\label{eq:prop-convergence}
      \sum_{n=0}^{\infty}
      \left(
        \frac{Q(\omega_n\givn\omega^n)}{P(\omega_n\givn\omega^n)}
        -
        1
      \right)^2
      <
      \infty.
    \end{equation}
  \end{proposition}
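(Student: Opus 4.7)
The plan is to transfer the game-theoretic arguments into the algorithmic framework via the standard identification (Schnorr; \cite{\VovkDAN}) between Martin-L\"of randomness of $\omega$ with respect to a computable measure $R$ and boundedness along $\omega$ of the capital process of every lower semicomputable strategy for Sceptic in the testing protocol driven by the conditionals $R(\cdot\givn\omega^n)$. Because $P$ and $Q$ are computable, every strategy I shall construct out of them (and, where relevant, out of a lower semicomputable witness to non-randomness of $\omega$ with respect to the other measure) is itself lower semicomputable. I write $r_n := Q(\omega_n\givn\omega^n)/P(\omega_n\givn\omega^n)$ and $L_N := Q(\omega^N)/P(\omega^N) = \prod_{n<N} r_n$, and restrict throughout to those $\omega$ with $P(\omega^n), Q(\omega^n) > 0$ for all $n$ (the former is forced by $P$-randomness, the latter by the $Q$-randomness to be proved).

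For the ``only if'' direction, I would quote Part~\ref{it:prop-c1} of Proposition~\ref{prop:criterion}: its explicit joint strategy $f\I_n := \tfrac12(1 + r_n)$ and $f\II_n := \tfrac12(1 + 1/r_n)$ is computable from $P$ and $Q$, and divergence of the sum in (\ref{eq:prop-convergence}) would force one of $\K\I_n, \K\II_n$ to tend to infinity, contradicting $P$- or $Q$-randomness of $\omega$.

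For the ``if'' direction I would sidestep Part~\ref{it:prop-c2} of Proposition~\ref{prop:criterion} (flagged as wrong) and reduce $Q$-randomness of $\omega$ to two-sided boundedness of $L_N$ along $\omega$. The reduction itself is routine: given any lower semicomputable $Q$-martingale $M$ that would witness a failure of $Q$-randomness, the product $M_n L_n$ is the capital of a lower semicomputable Sceptic~I strategy $\tilde g_n := g_n r_n$ (where $g_n$ is the per-step multiplier of $M$), by the one-step change-of-measure identity $\int g_n r_n\,P(\dd\omega_n\givn\omega^n) = \int g_n\,Q(\dd\omega_n\givn\omega^n) = 1$; so $P$-randomness of $\omega$ forces $M_n L_n = O(1)$, and a lower bound on $L_n$ then gives $M_n = O(1)$, a contradiction. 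The upper bound $L_N = O(1)$ is immediate from $P$-randomness applied to the computable likelihood-ratio strategy $f_n := r_n$, whose capital is exactly $L_N$. For the matching lower bound I would first deduce $\sum_n(r_n - 1) = O(1)$ from the hypothesis $\sum_n(r_n-1)^2<\infty$ and $P$-randomness by invoking an effective (Chow-type) martingale convergence theorem for the $P$-martingale with differences $r_n - 1$, according to which its partial sums converge on the computable event $\{\sum(r_n-1)^2<\infty\}$ outside an effective $P$-null set. The Taylor expansion $\log r_n = (r_n-1) - \tfrac12(r_n-1)^2 + O((r_n-1)^3)$, valid once $|r_n-1|<1/2$ (eventually automatic since $r_n\to 1$), then yields $\log L_\infty = \sum_n(r_n-1) - \tfrac12\sum_n(r_n-1)^2 + O(1) = O(1)$.

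The main obstacle is the effectivization step: packaging the classical $P$-almost-sure implication ``$\sum(r_n-1)^2<\infty \Rightarrow \sum_n(r_n-1)$ converges'' for the unbounded martingale differences $r_n - 1$ (so that Lemma~\ref{lem:martingale} does not apply off the shelf) into an effective $P$-null exceptional set; this is the subtlety that ultimately drove the author to flag the corresponding game-theoretic statement as wrong. One workable route is a truncation argument: for each $M$, replace $r_n - 1$ by $(r_n - 1)\wedge M$, apply Lemma~\ref{lem:martingale} after recentering, and combine over $M = 1, 2, \ldots$ by Lemma~\ref{lem:mix}; on $\{\sum(r_n-1)^2<\infty\}$ one has $r_n \to 1$, so only finitely many $n$ have $r_n - 1 > M$, and truncated convergence passes to untruncated convergence. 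Once this is settled, the remaining Taylor bookkeeping and the $0/0$ exceptional-step convention are routine.
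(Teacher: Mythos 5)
The proposition you are attempting to prove is false. The paper gives an explicit counterexample immediately below its statement: take $\Omega=\{0,1\}$, $P$ concentrated on $\omega = 11\ldots$, and $Q(1\givn\omega^n) = 1 - 1/n$ along the all-ones branch. Then $r_n := Q(\omega_n\givn\omega^n)/P(\omega_n\givn\omega^n)$ satisfies $r_n - 1 = -1/n$ along $\omega$, so $\sum_n(r_n-1)^2 < \infty$ and $\omega$ is trivially $P$-random, yet $Q(\omega^N)=\prod_{k\le N}(1-1/k)\sim 1/N\to 0$ effectively, and the computable $Q$-martingale that pays $1/Q(\omega^N)$ along $\omega$ (and $0$ off it) diverges; hence $\omega$ is not $Q$-random. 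The proof the paper keeps after the counterexample is recorded precisely as a broken attempt.

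The error in your ``if'' direction is at the lower bound on $L_N$. You want $\sum_n(r_n-1) = O(1)$ from $\sum_n(r_n-1)^2 < \infty$ via an effective martingale convergence theorem ``for the $P$-martingale with differences $r_n - 1$''. But $r_n-1$ is a $P$-martingale difference only under local absolute continuity $Q(\cdot\givn\omega^n)\ll P(\cdot\givn\omega^n)$, which the hypotheses do not supply: its one-step $P$-conditional mean is $Q(\{a : P(a\givn\omega^n)>0\}\givn\omega^n) - 1 \le 0$, strictly negative whenever $Q$ puts mass where $P$ does not. In the counterexample this drift is $-1/n$ and sums to $-\infty$ while the realized squares remain summable. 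Your truncate-and-recenter fix cannot help, because the centering terms are exactly this divergent drift, and Lemmas~\ref{lem:martingale}--\ref{lem:submartingale} only ever yield $\sup$-type (one-sided) bounds, never the $\inf$-type bound a supermartingale would need. The same local singularity defeats your change-of-measure identity $\int g_n r_n\,P(\dd a\givn\omega^n) = \int g_n\,Q(\dd a\givn\omega^n)$: only ``$\le$'' survives in general, which is enough for the upper bound on $L_N$ but not the lower one. Finally, even for genuine martingale differences, ``$\sum\Delta_n^2<\infty\Rightarrow\sum\Delta_n$ converges'' is not a theorem without a boundedness restriction; the classical convergence is on $\{\langle M\rangle_\infty<\infty\}$, the predictable quadratic variation, not on $\{[M]_\infty<\infty\}$. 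So the plan has an unbridgeable gap — as it must, since its intended conclusion is false.
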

  \textbf{Counterexample:}
  let $\Omega:=\{0,1\}$,
  $P$ be concentrated at the sequence $11\ldots$
  and $Q$ be defined by $Q(1\givn\omega^n)=1-1/n$.
  Then (\ref{eq:prop-convergence}) holds for $\omega:=11\ldots$,
  $\omega$ is random w.r.\ to $P$
  but $\omega$ is not random w.r.\ to $Q$.
  \begin{proof}
    In one direction this is proved in Vovk (\citeyear{\VovkDAN}, Theorem 2):
    (\ref{eq:prop-convergence}) is satisfied
    if $\omega$ is random w.r.\ to both $P$ and $Q$.
    
    Now let us suppose that (\ref{eq:prop-convergence}) holds.
    Define a truncated version of $Q$ by
    $\overline{Q}:=\min(Q,2P)$;
    in general, $Q$ is only guaranteed to be a semimeasure.
    It is clear that (\ref{eq:prop-convergence}) also holds
    for $\overline{Q}$ in place of $Q$.

    Let us prove that the sequence
    \begin{equation}\label{eq:bounded}
      \sum_{i=0}^{n-1}
      \left(
        \frac{\overline{Q}(\omega_i\givn\omega^i)}{P(\omega_i\givn\omega^i)}
        -
        1
      \right)
    \end{equation}
    is bounded.
    For each $C>0$, let $\xi_C$ be a computable positive process
    (i.e., a real-valued function on $\{0,1\}^*$)
    satisfying
    \begin{multline*}
      \sum_{i=0}^{n-1}
      \left(
        \frac{\overline{Q}(\omega_i\givn\omega^i)}{P(\omega_i\givn\omega^i)}
        -
        1
      \right)^2
      \le
      C-2\\
      \Longrightarrow
      \xi(\omega^n)
      =
      1
      +
      \frac1C
      \left(
        \left(
          \sum_{i=0}^{n-1}
          \left(
            \frac{\overline{Q}(\omega_i\givn\omega^i)}{P(\omega_i\givn\omega^i)}
            -
            1
          \right)
        \right)^2
        -
        \sum_{i=0}^{n-1}
        \left(
          \frac{\overline{Q}(\omega_i\givn\omega^i)}{P(\omega_i\givn\omega^i)}
          -
          1
        \right)^2
      \right).
    \end{multline*}
    The process $\xi_C$ is a supermartingale w.r.\ to $P$,
    in the sense that $\xi_CP$ is a semimeasure.
    Since $\omega$ is random w.r.\ to $P$,
    the supermartingale $\sum_{C=1}^{\infty}\xi_C$ is bounded on $\omega$,
    and so the sequence (\ref{eq:bounded}) is indeed bounded.

    Since $\ln(1+x)\ge x-x^2$ for $x\ge-\frac12$,
    we have
    \begin{equation*}
      \ln
      \frac{\overline{Q}(\omega_n\givn\omega^n)}{P(\omega_n\givn\omega^n)}
      \ge
      \left(
        \frac{\overline{Q}(\omega_n\givn\omega^n)}{P(\omega_n\givn\omega^n)}
        -
        1
      \right)
      -
      \left(
        \frac{\overline{Q}(\omega_n\givn\omega^n)}{P(\omega_n\givn\omega^n)}
        -
        1
      \right)^2
    \end{equation*}
    from some $n$ on,
    and so $\omega$ is random w.r.\ to $\overline{Q}$.
    Therefore, $\omega$ is random w.r.\ to $Q$.
    \ifnotJOURNAL
      \qedtext
    \fi
    \ifJOURNAL
      \qed
    \fi
  \end{proof}

  Let us check the inequality $\ln(1+x)\ge x-x^2$, $x\ge-\frac12$, used above.
  Since it becomes an equality for $x=0$,
  differentiating both sides we can see that it suffices to prove
  $\frac{1}{1+x}\ge1-2x$ when $x\ge0$
  and
  $\frac{1}{1+x}\le1-2x$ when $x\le0$,
  i.e.,
  to prove
  $x+2x^2\ge0$ when $x\ge0$
  and
  $x+2x^2\le0$ when $x\le0$,
  i.e., to prove $1+2x\ge0$,
  which is obvious.
\blueend\fi

\ifFULL\bluebegin
\section{Algorithmic criterion of randomness}

Deduce an algorithmic criterion of randomness from Theorem \ref{thm:criterion}
(Phil's idea).
It would show that the game-theoretic language is the right one:
both measure-theoretic and algorithmic results follow from it.
\blueend\fi

\section{Criteria of absolute continuity and singularity}
\label{sec:kabanov}

A simple measure-theoretic counterpart of the competitive testing protocol
is the measurable space $\Omega^{\infty}$ equipped with two probability measures,
$\Prob\I,\Prob\II\in\PPP(\Omega^{\infty})$.
The generic element of $\Omega^{\infty}$ will be denoted $\omega_1\omega_2\ldots$\,.
Let $P\I_n$ (resp.\ $P\II_n$) be a regular conditional distribution
of $\omega_n$ given $\omega_1\ldots\omega_{n-1}$
w.r.\ to the probability measure $\Prob\I$ (resp.\ $\Prob\II$).
(For regular conditional distributions to exist
it suffices to assume that $\Omega$ is a Borel space:
see, e.g., \citealt{shiryaev:1996}, Theorem II.7.5.)
The strategies of the two Forecasters are fixed:
Forecaster I is playing $P\I_n$ and Forecaster II is playing $P\II_n$;
therefore,
they cease to be active players in the game.

We will consider the filtration $(\FFF_n)_{n=0}^{\infty}$
where each $\FFF_n$ is generated by $\omega_1,\ldots,\omega_n$
and sometimes write $\FFF$ for $\FFF_{\infty}$.
By a \emph{normalized nonnegative measure-theoretic martingale}
w.r.\ to a probability measure $\Prob$ on $\Omega^{\infty}$
we will mean a martingale $(\xi_n)_{n=0}^{\infty}$
(see, e.g., \citealt{shiryaev:1996}, Chapter VII)
w.r.\ to $\Prob$ and $(\FFF_n)_{n=0}^{\infty}$
such that $\xi_0=1$ and $\xi_n\ge0$ for all $n=1,2,\ldots$;
we will allow $\xi_n$ to take value $\infty$
(of course, with probability zero).
We will usually write $\xi(\omega_1,\ldots,\omega_n)$ for $\xi_n(\omega_1,\omega_2,\ldots)$
and regard $\xi$ as a function on the set $\Omega^*$
of all finite sequences of elements of $\Omega$.
A \emph{normalized nonnegative game-theoretic martingale} w.r.\ to $\Prob$
(either $\Prob\I$ or $\Prob\II$)
is Sceptic's (either Sceptic I's or Sceptic II's, respectively) capital
represented as a function of Reality's moves $\omega_1\ldots\omega_n$.

To state the connection between measure-theoretic and game-theoretic notions
of normalized nonnegative martingale
(in the current measure-theoretic framework),
let us say that two \emph{processes}
(i.e., measurable functions on $\Omega^{*}$)
are \emph{equivalent} (w.r.\ to $\Prob$)
if they coincide $\Prob$-almost surely.
It is customary in measure-theoretic probability
to identify equivalent processes
(although this practice does not carry over to the game-theoretic framework).
It is easy to see that the normalized nonnegative measure-theoretic martingales
are the closure of the measurable normalized nonnegative game-theoretic martingales
w.r.\ to this relation of equivalence.

The following proposition is a special case of the infinitary version of Ville's theorem;
for a proof see, e.g., \citet{shafer/vovk:2001}, Proposition 8.14.
\begin{proposition}\label{prop:Ville-zero}
  Let $E\in\FFF$ and $\Prob$ be either $\Prob\I$ or $\Prob\II$.
  \begin{enumerate}
  \item
    If a normalized nonnegative measure-theoretic martingale
    diverges to infinity when $E$ happens,
    then $\Prob E = 0$.
  \item
    If $\Prob E = 0$,
    then there is a normalized nonnegative measure-theoretic martingale
    that diverges to infinity if $E$ happens.
  \end{enumerate}
\end{proposition}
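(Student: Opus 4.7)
The plan is to prove the two implications separately, treating Part 1 as a routine application of Doob's maximal inequality and Part 2 as a construction via conditional-probability martingales, weighted appropriately.

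For Part 1, given a normalized nonnegative measure-theoretic martingale $(\xi_n)$ diverging to infinity on $E$, I would apply Doob's maximal inequality, which gives $\Prob\{\sup_{n\le N}\xi_n \ge \lambda\} \le \xi_0/\lambda = 1/\lambda$ for every $N$ and $\lambda>0$. Letting $N\to\infty$ and then $\lambda\to\infty$ yields $\Prob\{\sup_n \xi_n = \infty\} = 0$. Since $\xi_n(\omega)\to\infty$ forces $\sup_n \xi_n(\omega)=\infty$, we conclude $E \subseteq \{\sup_n\xi_n=\infty\}$ up to a null set, and hence $\Prob E = 0$.

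For Part 2, suppose $\Prob E = 0$. Since $\FFF = \FFF_\infty$ is generated by the algebra of cylinder sets, for each integer $k\ge 1$ I can find a measurable set $A_k$ that is a countable union of cylinder sets with $E \subseteq A_k$ and $\Prob(A_k) \le 2^{-k}$ (by outer regularity applied to Carath\'eodory's construction of $\Prob$). Set
\begin{equation*}
  \xi^{(k)}_n := \frac{\Prob(A_k \mid \FFF_n)}{\Prob(A_k)},
\end{equation*}
which is a normalized nonnegative martingale. By L\'evy's upward convergence theorem, $\xi^{(k)}_n \to \mathbf{1}_{A_k}/\Prob(A_k)$ $\Prob$-almost surely, so off some $\Prob$-null set $N_k$ we have $\xi^{(k)}_n(\omega) \to 1/\Prob(A_k) \ge 2^k$ for every $\omega\in E\setminus N_k$. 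Now define
\begin{equation*}
  \xi_n := \sum_{k=1}^{\infty} 2^{-k}\, \xi^{(k)}_n.
\end{equation*}
This series converges in $L^1(\Prob)$ (since $\Expect \xi^{(k)}_n = 1$) and defines a normalized nonnegative martingale with $\xi_0 = 1$. Outside the $\Prob$-null set $N := \bigcup_k N_k$, for every $K$ and every $\omega \in E\setminus N$,
\begin{equation*}
  \liminf_{n\to\infty} \xi_n(\omega) \ge \sum_{k=1}^{K} 2^{-k}\cdot 2^{k} = K,
\end{equation*}
so $\xi_n(\omega)\to\infty$. Passing to an equivalent martingale (identifying $(\xi_n)$ with any modification on the $\Prob$-null set $N$) completes the proof.

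The main obstacle is bookkeeping the exceptional null sets in Part 2: one must choose the sets $A_k$ inside $\FFF_\infty$ rather than working with arbitrary outer-measure covers, use L\'evy's theorem to pass from conditional probabilities to indicators, and then verify that weighting by $2^{-k}$ forces divergence (rather than mere boundedness from below) by noting that the bound $K$ is arbitrary. The integrability and martingale properties of the weighted sum are routine by monotone convergence applied to the nonnegative partial sums.
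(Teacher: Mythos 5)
Your Part 1 is correct: it is just Ville's (Doob's) maximal inequality $\Prob\{\sup_{n\le N}\xi_n \ge \lambda\} \le 1/\lambda$, followed by taking $N\to\infty$ and then $\lambda\to\infty$. One small slip: you do not need "up to a null set" -- the inclusion $E\subseteq\{\sup_n\xi_n=\infty\}$ is literal, since divergence to $\infty$ pointwise on $E$ forces the supremum to be infinite there.

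Part 2 is the right construction (it is Ville's), but the final step contains a genuine gap. After you invoke L\'evy's upward theorem, the convergence $\xi^{(k)}_n\to\mathbf{1}_{A_k}/\Prob(A_k)$ holds only off an exceptional null set $N_k$, so the mixture $\xi_n$ diverges only on $E\setminus\bigcup_k N_k$, whereas the proposition asserts divergence everywhere on $E$. Your closing remark -- ``passing to an equivalent martingale (identifying $(\xi_n)$ with any modification on the $\Prob$-null set $N$)'' -- does not repair this. The set $N=\bigcup_k N_k$ lies in $\FFF_\infty$, not in $\FFF_n$, so setting $\xi_n:=\infty$ on $N$ would destroy $\FFF_n$-measurability of $\xi_n$; and rerunning the argument with $E\cup N$ in place of $E$ just generates a new exceptional null set, producing an infinite regress rather than termination.

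The right fix is to bypass L\'evy's theorem and the exceptional null sets entirely by exploiting the cylinder structure of the covers. Write $A_k=\bigcup_m C_{k,m}$ with each $C_{k,m}$ a cylinder set, say $\FFF_{l_{k,m}}$-measurable. For $B$ a finite union $C_{k,1}\cup\dots\cup C_{k,m}$, which is $\FFF_{L}$-measurable with $L:=\max(l_{k,1},\dots,l_{k,m})$, one may choose the version of $\Prob(B\mid\FFF_n)$ to be the indicator $\mathbf{1}_B$ itself for all $n\ge L$. Taking the increasing limit over $m$ (adjusting the versions by pointwise maxima to enforce monotonicity in $m$) produces an everywhere-defined version $\xi^{(k)}_n$ of $\Prob(A_k\mid\FFF_n)/\Prob(A_k)$ with the property that for every $\omega\in A_k$ there is an $m$ and a level $L$ such that $\xi^{(k)}_n(\omega)=1/\Prob(A_k)$ for all $n\ge L$, with no exceptional set at all. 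With this choice of versions the mixture $\sum_k 2^{-k}\xi^{(k)}_n$ diverges to $\infty$ pointwise on all of $E$, and the proof closes. This is precisely how the cited Shafer--Vovk proof handles the issue: the argument is cast for functions on the tree $\Omega^*$ (game-theoretic capital processes), so that cylinder-level convergence is literal rather than almost-sure.
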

This proposition is also true
for the measurable normalized nonnegative game-theoretic martingales
and establishes the connection between the game-theoretic and measure-theoretic notions
of a ``null event'':
for example,
an event (in the measure-theoretic sense,
i.e., a measurable subset of $\Omega^{\infty}$) $E$ satisfies $\Prob\I(E)=0$
if and only if Sceptic I has a measurable strategy
that makes him infinitely rich on $E$.

\ifFULL\bluebegin
  Give the simple independent proof of Proposition \ref{prop:Ville-zero}
  for the measurable normalized nonnegative game-theoretic martingales.
\blueend\fi

The following is a special case
of the Kabanov--Liptser--Shiryaev (\citeyear{\Kabanov}) criterion
of absolute continuity and singularity.
For simplicity we assume that
$\left.\Prob\I\right|_{\FFF_n}\ll\left.\Prob\II\right|_{\FFF_n}$
for all $n$
(this is the standard assumption of local absolute continuity,
which simplifies measure-theoretic results:
see, e.g., \citealt{jacod/shiryaev:2003-local}, Sect.~IV.2c).
This will ensure the timidity of Forecaster II
(at least after changing the regular conditional distributions
on a set of probability zero, both under $\Prob\I$ and $\Prob\II$).

\begin{corollary}\label{cor:ac-s}
  In the measure-theoretic competitive testing protocol:
  \begin{enumerate}
  \item\label{it:ac-s-1}
    For any $\alpha\in(-1,1)$,
    $\Prob\I\ll\Prob\II$ if and only if (\ref{eq:close}) holds $\Prob\I$-almost surely.
  \item\label{it:ac-s-2}
    For any $\alpha\in(-1,1)$,
    $\Prob\I\perp\Prob\II$ if and only if (\ref{eq:far}) holds $\Prob\I$-almost surely.
  \end{enumerate}
\end{corollary}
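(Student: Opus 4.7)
The plan is to deduce Corollary \ref{cor:ac-s} from Theorem \ref{thm:criterion} by using Ville's Proposition \ref{prop:Ville-zero} as a translation device: the game-theoretic event ``a particular Sceptic becomes infinitely rich'' has probability zero under the corresponding Forecaster's law. The standing assumption that $\Prob\I|_{\FFF_n}\ll\Prob\II|_{\FFF_n}$ for all $n$ ensures $P\I_n\ll P\II_n$ almost surely under both $\Prob\I$ and $\Prob\II$, so after adjusting the regular conditional distributions on a null set we may assume Forecaster II is timid on every play, and Theorem \ref{thm:criterion} applies. Fix $\alpha\in(-1,1)$, write $F$ for the event $\{\sum_n D^{(\alpha)}(P\I_n\para P\II_n)=\infty\}$, and pick the joint strategy from Part \ref{it:c1} of Theorem \ref{thm:criterion}. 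Letting $A\I$ and $A\II$ denote the events that the respective Sceptic becomes infinitely rich, Proposition \ref{prop:Ville-zero} gives $\Prob\I(A\I)=0$ and $\Prob\II(A\II)=0$, while Theorem \ref{thm:criterion}(1) gives $F\subseteq A\I\cup A\II$.

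For Part \ref{it:ac-s-1}, the forward direction is immediate: if $\Prob\I\ll\Prob\II$ then $\Prob\I(A\II)=0$ as well, whence $\Prob\I(F)\le\Prob\I(A\I)+\Prob\I(A\II)=0$, i.e.\ (\ref{eq:close}) holds $\Prob\I$-a.s. For the converse, assume $\Prob\I(F)=0$ and fix $E\in\FFF$ with $\Prob\II(E)=0$. Proposition \ref{prop:Ville-zero} furnishes a strategy for Sceptic II that makes him infinitely rich on $E$; letting him play this strategy and letting Sceptic I play the strategy from Theorem \ref{thm:criterion}(2), Sceptic I becomes infinitely rich on $E\cap F^c$. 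Since the event ``Sceptic I becomes infinitely rich'' is $\Prob\I$-null by Ville, $\Prob\I(E\cap F^c)=0$, and combining with $\Prob\I(F)=0$ we conclude $\Prob\I(E)=0$, establishing $\Prob\I\ll\Prob\II$.

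Part \ref{it:ac-s-2} is handled by the same set-theoretic manipulations. If $\Prob\I(F)=1$ then $F\subseteq A\I\cup A\II$ together with $\Prob\I(A\I)=0$ forces $\Prob\I(A\II)=1$, while $\Prob\II(A\II)=0$, so $A\II$ exhibits the singularity $\Prob\I\perp\Prob\II$. Conversely, if $\Prob\I\perp\Prob\II$, pick $E$ with $\Prob\I(E)=1$ and $\Prob\II(E)=0$, and run the argument of the previous paragraph: a Sceptic II strategy that is rich on $E$ combined with Sceptic I's strategy from Theorem \ref{thm:criterion}(2) forces Sceptic I to be rich on $E\cap F^c$, yielding $\Prob\I(E\cap F^c)=0$, and since $\Prob\I(E)=1$ this gives $\Prob\I(F^c)=0$, i.e.\ (\ref{eq:far}) $\Prob\I$-a.s.

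The only delicate point is measurability: one must verify that the strategies constructed in the proofs of Theorem \ref{thm:criterion} (via the explicit formulas (\ref{eq:sceptic-1})--(\ref{eq:strategy-2}) and the mixtures of Lemma \ref{lem:mix}) inherit joint measurability from a measurable choice of the densities $\beta\I_n,\beta\II_n$ w.r.\ to the mixture $Q_n:=(P\I_n+P\II_n)/2$, so that they are bona fide game-theoretic martingales to which Proposition \ref{prop:Ville-zero} applies. Under the Borel-space assumption on $\Omega$ this is routine, and the corollary reduces to the elementary set-theoretic manipulations above.
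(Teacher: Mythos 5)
Your proof is correct and follows essentially the same route as the paper's: translate game-theoretic "becoming infinitely rich" into $\Prob$-null events via Ville's Proposition~\ref{prop:Ville-zero}, and combine with Parts~\ref{it:c1} and~\ref{it:c2} of Theorem~\ref{thm:criterion}. The one cosmetic difference is bookkeeping: where you split an event as $E=(E\cap F)\cup(E\cap F^c)$ and bound each piece separately, the paper instead has Sceptic~I play a half-and-half mixture (via Lemma~\ref{lem:mix}) of the Theorem~\ref{thm:criterion} strategy with a strategy that is rich when $F$ (resp.\ $F^c$) occurs, producing a single martingale that diverges on all of $E$ — the two devices accomplish exactly the same thing.
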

\begin{proof}
  We start from Part ``if'' of Part \ref{it:ac-s-1}.
  Suppose (\ref{eq:close}) holds $\Prob\I$-almost surely
  and $E$ is an event such that $\Prob\II(E)=0$;
  our goal is to prove that $\Prob\I(E)=0$.
  Let Sceptic II play a measurable strategy that makes him infinitely rich
  on the event $E$,
  and let Sceptic I play the half-and-half mixture
  (in the sense of Lemma \ref{lem:mix})
  of the following two strategies:
  one of the measurable strategies
  whose existence is guaranteed in Part \ref{it:c2} of Theorem \ref{thm:criterion}
  (cf.\ the remark following this proof)
  and a measurable strategy that makes him infinitely rich
  when the event (\ref{eq:close}) fails to happen.
  It is easy to check that Sceptic I is guaranteed
  to become infinitely rich on the event $E$,
  no matter whether (\ref{eq:close}) holds or not.
  Therefore, indeed $\Prob\I(E)=0$.

  Next we prove Part ``only if'' of Part \ref{it:ac-s-1}.
  Fix measurable strategies $\SSS\I$ and $\SSS\II$ for the Sceptics
  that make at least one of them infinitely rich
  when the event (\ref{eq:close}) fails to happen;
  Part \ref{it:c1} of Theorem \ref{thm:criterion}
  guarantees that such strategies exist.
  Sceptic II will play strategy $\SSS\II$
  whereas Sceptic I will play a mixture of $\SSS\I$ and another strategy.
  Let $E$ be the event that Sceptic II becomes infinitely rich.
  Since $\Prob\II(E)=0$ and $\Prob\I\ll\Prob\II$,
  we have $\Prob\I(E)=0$
  and so Sceptic I has a measurable strategy that makes him infinitely rich on $E$;
  let him play the half-and-half mixture of this strategy and $\SSS\I$.
  This strategy for Sceptic I will guarantee
  his becoming infinitely rich whenever (\ref{eq:close}) fails to happen.

  The proof of Part ``if'' of Part \ref{it:ac-s-2}
  again relies on Part \ref{it:c1} of Theorem \ref{thm:criterion}.
  Define $\SSS\I$ and $\SSS\II$ as before,
  let Sceptic II play $\SSS\II$,
  and let Sceptic I play the half-and-half mixture of $\SSS\I$
  and a measurable strategy that makes him infinitely rich
  when (\ref{eq:far}) fails to happen.
  It is clear that one of the Sceptics becomes infinitely rich
  no matter what happens.
  Therefore, $\Prob\I$ and $\Prob\II$ are mutually singular
  (e.g., take $E$ as the event that Sceptic I becomes infinitely rich;
  then $\Prob\I(E)=0$ and $\Prob\II(\Omega^{\infty}\setminus E)=0$).

  It remains to prove Part ``only if'' of Part \ref{it:ac-s-2}.
  Let $E$ be an event such that $\Prob\II(E)=0$ and $\Prob\I(E)=1$.
  Let Sceptic II play a measurable strategy
  that makes him infinitely rich on $E$
  and let Sceptic I play the half-and-half mixture
  of the following two strategies:
  one of the measurable strategies
  whose existence is guaranteed in Part \ref{it:c2} of Theorem \ref{thm:criterion}
  and a measurable strategy that makes him infinitely rich
  when the event $E$ fails to happen.
  Now if the event (\ref{eq:far}) fails to happen,
  Sceptic I is guaranteed to become infinitely rich,
  regardless of whether $E$ happens.
  This completes the proof.
  \ifnotJOURNAL
    \qedtext
  \fi
  \ifJOURNAL
    \qed
  \fi
\end{proof}

\begin{remark*}
  Notice that to deduce Corollary \ref{cor:ac-s} we need slightly more
  than stated in Theorem \ref{thm:criterion}:
  namely, we need measurable strategies
  for the Sceptics (in the case of Part \ref{it:c1})
  or Sceptic I (in the case of Part \ref{it:c2}).
  It is easy to see that the strategies constructed in Theorem \ref{thm:criterion}
  satisfy this property,
  but it would have been awkward to include their measurability in the statement.
  The strategies are not only measurable but also computable,
  which is a much stronger property.
  Therefore, it would have been more natural to include the strategies' computability
  in the statement of Theorem \ref{thm:criterion};
  however, this would significantly complicate the exposition,
  especially that there are several popular non-equivalent definitions of computability,
  even for real-valued functions of real variable.
  Following \citet{shafer/vovk:2001},
  we dropped any references to the properties of regularity
  of the constructed strategies for the Sceptics
  in the formal statements of our results.
\end{remark*}

To discuss the intuition behind Corollary \ref{cor:ac-s},
let us assume that Forecaster I is reliable
in the following measure-theoretic sense:
we do not expect an event $E$ to happen
if it is chosen in advance and satisfies $\Prob\I(E)=0$.
Then $\Prob\I\ll\Prob\II$ means that Forecaster II
is ``automatically reliable'':
if $E$ is chosen in advance and satisfies $\Prob\II(E)=0$,
we also do not expect it to happen.
Part \ref{it:ac-s-1} of Corollary \ref{cor:ac-s}
says that Forecaster II is automatically reliable
if and only if the Forecasters' predictions are close in the sense of (\ref{eq:close})
holding almost surely w.r.\ to the reliable probability measure $\Prob\I$.

On the other hand,
$\Prob\I\perp\Prob\II$ means that Forecaster II
is ``automatically unreliable'':
we can choose in advance an event $E$
which we expect to happen ($\Prob\I(E)=1$)
but which will falsify the probability measure $\Prob\II$
($\Prob\II(E)=0$).
Part \ref{it:ac-s-2} of Corollary \ref{cor:ac-s}
says that Forecaster II is automatically unreliable
if and only if the Forecasters' predictions are far apart in the sense of (\ref{eq:far})
holding almost surely w.r.\ to the reliable probability measure $\Prob\I$.

The game-theoretic Theorem \ref{thm:criterion}
has several advantages over Corollary \ref{cor:ac-s}.
First of all,
Theorem \ref{thm:criterion} is ``pointwise'':
it carries information about specific plays of the game.
It also has the flexibility provided by the game-theoretic framework in general:
\begin{itemize}
\item
  the Forecasters can react to the Sceptics' moves
  (and Reality can react to both the Forecasters and the Sceptics);
\item
  all players can react to various events outside the protocol;
\item
  game-theoretic results about merging of opinions,
  unlike the standard measure-theoretic results about absolute continuity and singularity,
  do not depend on the (vast) parts of the space $\Omega^*$
  that are never reached or even approached by Reality.
\end{itemize}

\ifFULL\bluebegin
\section{The framework of triangular arrays}
\label{sec:triangular}

In this section we will strengthen Theorems \ref{thm:criterion} and \ref{thm:growth};
the stronger forms of this section, however,
are conceptually less simple,
and so we have given independent proofs
for all the preceding results.
We will be interested in the following modification
of the competitive testing protocol.

\medskip

\noindent
\textsc{Competitive testing protocol (triangular version)}

\noindent
\textbf{Players:} Reality, Forecaster I, Sceptic I, Forecaster II, Sceptic II

\noindent
\textbf{Protocol:}

\parshape=17
\IndentI   \WidthI
\IndentII   \WidthII
\IndentII   \WidthII
\IndentII   \WidthII
\IndentIII  \WidthIII
\IndentIII  \WidthIII
\IndentIII  \WidthIII
\IndentIII  \WidthIII
\IndentIII  \WidthIII
\IndentIII  \WidthIII
\IndentIII  \WidthIII
\IndentIII  \WidthIII
\IndentIV   \WidthIV
\IndentIV   \WidthIV
\IndentIII  \WidthIII
\IndentII   \WidthII
\IndentI    \WidthI
\noindent
FOR $j=1,2,\dots$:\\
  $\K\I_{j,0} := 1$.\\
  $\K\II_{j,0} := 1$.\\
  FOR $n=1,2,\dots$:\\
    Forecaster I announces $P\I_{j,n}\in\PPP(\Omega)$.\\
    Forecaster II announces $P\II_{j,n}\in\PPP(\Omega)$.\\
    Sceptic II announces $f\II_{j,n}:\Omega\to[0,\infty]$
	such that $\int f\II_{j,n} \dd P\II_{j,n}=1$.\\
    Sceptic I announces $f\I_{j,n}:\Omega\to[0,\infty]$
	such that $\int f\I_{j,n} \dd P\I_{j,n}=1$.\\
    Reality announces $\omega_{j,n}\in\Omega$.\\
    $\K\I_{j,n} := \K\I_{j,n-1} f\I_{j,n} (\omega_{j,n})$.\\
    $\K\II_{j,n} := \K\II_{j,n-1} f\II_{j,n} (\omega_{j,n})$.\\
    IF Reality wishes:\\
      She sets $N_j:=n$, $\K\I_j:=\K\I_{j,n}$, $\K\II_j:=\K\II_{j,n}$.\\
      She terminates the loop over $n$ moving to the next $j$.\\
    END IF\\
  END FOR\\
END FOR

\medskip

\noindent
This protocol is analogous to the standard scheme of triangular arrays
in probability theory.
The game consists of \emph{stages}, numbered by $j=1,2,\ldots$,
the duration of the $j$th stage being $N_j$.
In this version of the paper we will assume that Reality is obliged
to eventually stop every loop over $n$,
so each $N_j$ is well defined.

We will only consider strategies for the Sceptics
in the triangular competitive testing protocol
that do not depend on $j$
(the goal of this restriction is to strengthen our results,
which assert the existence of strategies for the Sceptics
satisfying various properties).
We will say that Sceptic I (resp.\ Sceptic II)
\emph{becomes infinitely rich}
if $\limsup_{j\to\infty}\K\I_j=\infty$
(resp.\ $\limsup_{j\to\infty}\K\II_j=\infty$).
[Later on: consider the alternative definition
where $\K\I_j\to\infty$ (resp.\ $\K\II_j\to\infty$) as $j\to\infty$.
Perhaps this will just lead to the interchange of $\limsup$ and $\liminf$ everywhere.
Notice that the natural triangular version of Lemma \ref{lem:equivalence}
is wrong.]
\begin{theorem}\label{thm:triangular-criterion}
  Let $\alpha\in(-1,1)$.
  In the triangular competitive testing protocol:
  \begin{enumerate}
  \item\label{it:tr-c1}
    The Sceptics have a joint strategy guaranteeing
    that at least one of them will become infinitely rich
    if
    \begin{equation}\label{eq:tr-far}
      \limsup_{j\to\infty}
      \sum_{n=1}^{N_j}
      D^{(\alpha)}
      \left(
        P\I_{j,n}\para P\II_{j,n}
      \right)
      =
      \infty.
    \end{equation}
  \item\label{it:tr-c2}
    Sceptic I has a strategy guaranteeing
    that he will become infinitely rich
    if
    \begin{equation}\label{eq:tr-close}
      \limsup_{j\to\infty}
      \sum_{n=1}^{N_j}
      D^{(\alpha)}
      \left(
        P\I_{j,n}\para P\II_{j,n}
      \right)
      <
      \infty
    \end{equation}
    and Sceptic II becomes infinitely rich.
  \end{enumerate}
\end{theorem}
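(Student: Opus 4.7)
The plan is to apply Part~\ref{it:na1} of Theorem~\ref{thm:non-asymptotic} stagewise: the strategies (\ref{eq:sceptic-1})--(\ref{eq:sceptic-2}) depend only on the Forecasters' current predictions and so qualify as $j$-invariant strategies for the triangular protocol. In each stage~$j$, restarting from capital~$1$, they yield
\begin{equation*}
  \frac{2}{1+\alpha}\ln\K\I_j + \frac{2}{1-\alpha}\ln\K\II_j = \sum_{n=1}^{N_j}D^{[\alpha]}(P\I_{j,n}\para P\II_{j,n}) \ge \sum_{n=1}^{N_j}D^{(\alpha)}(P\I_{j,n}\para P\II_{j,n})
\end{equation*}
(the inequality is (\ref{eq:inequalities}); both coefficients on the left are positive because $\alpha\in(-1,1)$). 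If the right-hand side has $\limsup_j=\infty$, so does the left-hand side, and since the $\limsup$ of a sum of two real sequences is infinite only when the $\limsup$ of at least one of them is, at least one of the Sceptics achieves $\limsup_j\K_j=\infty$.

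\textbf{Part~2.} The plan is to mirror the proof of Part~\ref{it:c2} of Theorem~\ref{thm:general-criterion} from Section~\ref{sec:criterion-proof}. I would first establish a triangular analogue of Lemma~\ref{lem:submartingale}: in the triangular submartingale protocol Sceptic has a $j$-invariant strategy forcing
\begin{equation*}
  \sup_j\sum_{n=1}^{N_j}\left(\int\xi_{j,n}\,dP_{j,n}+\int\xi_{j,n}^2\,dP_{j,n}\right)<\infty \;\Longrightarrow\; \sup_j\sum_{n=1}^{N_j}\xi_{j,n}(\omega_{j,n})<\infty.
\end{equation*}
The proof repeats that of Lemmas~\ref{lem:martingale}--\ref{lem:submartingale}: the strategy $\SSS_C$ is $j$-invariant and restarts each stage from capital~$1$, so once $C$ majorizes the supremum in the hypothesis its explicit capital formula applies throughout every stage, and unbounded realized sums send $\limsup_j\K^{\SSS_C}_j=\infty$; mixing over $C=1,2,\ldots$ via Lemma~\ref{lem:mix} completes the argument. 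The triangular Borel--Cantelli lemma follows as the indicator case. Now the reasoning from (\ref{eq:key}) onward transfers stage by stage: with $\xi_{j,n}:=U(\ln(\beta\I_{j,n}/\beta\II_{j,n}))$, the hypothesis $\limsup_j\sum_n D^{(\alpha)}(P\I_{j,n}\para P\II_{j,n})<\infty$ and (\ref{eq:key}) let the triangular submartingale lemma force $\sup_j\sum_n U(\ln(\beta\I_{j,n}(\omega_{j,n})/\beta\II_{j,n}(\omega_{j,n})))<\infty$, and with $E_{j,n}:=\{\beta\I_{j,n}>e\beta\II_{j,n}\}$ and Lemma~\ref{lem:inequality1} the triangular Borel--Cantelli lemma forces the number of ``bad'' steps $|\{n\le N_j:\omega_{j,n}\in E_{j,n}\}|$ to be uniformly bounded in~$j$. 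Mixing these with the likelihood-ratio strategy $f\I_{j,n}:=(\beta\II_{j,n}/\beta\I_{j,n})f\II_{j,n}$, for which $\K\I_j=\K\II_j\prod_n(\beta\II_{j,n}/\beta\I_{j,n})$, is the intended route to $\limsup_j\K\I_j=\infty$ from $\limsup_j\K\II_j=\infty$.

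The main obstacle is a genuine gap that appears only in the triangular setting. In the single game, the Borel--Cantelli conclusion ``$\omega_n\notin E_n$ from some~$n$ on'' forces the tail of $\sum\ln(\beta\I_n/\beta\II_n)$ to coincide with the tail of $\sum U(\ln(\beta\I_n/\beta\II_n))$, and the finitely many bad terms are automatically finite; hence $\sup\sum\ln<\infty$. In the triangular analogue, a uniform bound on the \emph{number} of bad steps per stage does \emph{not} bound their contribution $\sum_n\III_{E_{j,n}}(\omega_{j,n})\ln(\beta\I_{j,n}(\omega_{j,n})/\beta\II_{j,n}(\omega_{j,n}))$ to $\sum\ln$, since Reality is free to make the log arbitrarily large on a bad step. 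Closing the gap will likely require either a quantitative refinement of the triangular Borel--Cantelli lemma that also controls excursion magnitudes---for instance applying the triangular submartingale lemma to $\min(L,(\ln(\beta\I_{j,n}/\beta\II_{j,n}))^+)$ for each $L$ and mixing over $L$ via Lemma~\ref{lem:mix}---or an additional $j$-invariant strategy for Sceptic~I that profits directly from large positive log-ratios along the subsequence on which $\K\II_j\to\infty$.
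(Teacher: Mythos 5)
Your Part 1 follows the paper's proof exactly: apply the strategies from Part \ref{it:na1} of Theorem \ref{thm:non-asymptotic} stagewise (they are $j$-invariant), get the inequality for each $j$ from (\ref{eq:small-alpha}) and (\ref{eq:inequalities}), and observe that an infinite $\limsup$ of a sum of two sequences forces an infinite $\limsup$ of at least one summand. This part is correct and needs no changes.

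Your Part 2 also mirrors the paper's route precisely: triangular analogues of the submartingale Lemma \ref{lem:submartingale} and the Borel--Cantelli Lemma \ref{lem:borel-cantelli}, then the stagewise version of (\ref{eq:key}), reducing to the forcing statement with the truncated log-ratios $U(\ln(\beta\I_{j,n}/\beta\II_{j,n}))$ bounded above and the ``bad'' set $E_{j,n}=\{\beta\I_{j,n}>e\beta\II_{j,n}\}$ hit at most a bounded number of times per stage. And the gap you then name is precisely the gap in the paper's own proof: the paper annotates the final forcing step, where $\sup_j\sum_n U(\ln(\beta\I_{j,n}/\beta\II_{j,n}))<\infty$ together with the uniform cardinality bound on bad steps is supposed to yield $\sup_j\sum_n\ln(\beta\I_{j,n}/\beta\II_{j,n})<\infty$, with the editorial comment that this step is wrong, and remarks afterwards that the triangular criterion is ``significantly simpler [because it is wrong].'' The reason you give is the right one: in the single-game setting, Borel--Cantelli gives an eventual absence of bad steps, so only finitely many realized terms differ between $\sum U(\ln\cdot)$ and $\sum\ln\cdot$, and each is finite; in the triangular setting a uniform bound on the \emph{count} of bad steps in each stage says nothing about the \emph{sizes} of $\ln(\beta\I_{j,n}(\omega_{j,n})/\beta\II_{j,n}(\omega_{j,n}))$ on those steps, which Reality can make arbitrarily large from stage to stage. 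Your two proposed repairs (a Borel--Cantelli variant controlling excursion magnitude via truncation and mixing, or a supplementary $j$-invariant strategy profiting from large positive log-ratios) are sensible directions, but the paper itself does not close this gap, so you should not present Part 2 as proved.
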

It is clear that Theorem \ref{thm:criterion}
is a special case of Theorem \ref{thm:triangular-criterion}:
it suffices to set $N_j:=j$ for all $j$
and consider the same moves $\omega_{j,n},P\I_{j,n},P\II_{j,n}$
for Reality and the Forecasters in all stages.

\begin{Proof}{of Theorem \ref{thm:triangular-criterion}, Part \ref{it:tr-c1}}
  Part \ref{it:tr-c1} of the theorem follows immediately
  from Part \ref{it:na1} of Theorem \ref{thm:non-asymptotic}:
  indeed, for $\alpha\in(-1,1)$ the Sceptics can guarantee
  \begin{equation*}
    \frac{2}{1+\alpha}
    \ln\K\I_j
    +
    \frac{2}{1-\alpha}
    \ln\K\II_j
    \ge
    \sum_{n=1}^{N_j}
    D^{(\alpha)}
    \left(
      P\I_{j,n}\para P\II_{j,n}
    \right)
  \end{equation*}
  (cf.\ (\ref{eq:small-alpha})),
  and so either $\limsup\K\I_j=\infty$ or $\limsup\K\II_j=\infty$.
  \ifnotJOURNAL
    \qedtext
  \fi
  \ifJOURNAL
    \qed
  \fi
\end{Proof}

Part \ref{it:tr-c2} will require triangular versions
of the semimartingale protocol.
For example, the additive representation is:

\medskip

\noindent
\textsc{Semimartingale protocol (triangular additive)}

\noindent
\textbf{Players:} Reality, Forecaster, Sceptic

\noindent
\textbf{Protocol:}

\parshape=14
\IndentI   \WidthI
\IndentII  \WidthII
\IndentII  \WidthII
\IndentIII \WidthIII
\IndentIII \WidthIII
\IndentIII \WidthIII
\IndentIII \WidthIII
\IndentIII \WidthIII
\IndentIII \WidthIII
\IndentIV  \WidthIV
\IndentIV  \WidthIV
\IndentIII \WidthIII
\IndentII  \WidthII
\IndentI   \WidthI
\noindent
FOR $j=1,2,\dots$:\\
  $\K_{j,0} := 1$.\\
  FOR $n=1,2,\dots$:\\
    Forecaster announces $P_{j,n}\in\PPP(\Omega)$.\\
    Reality announces measurable $\xi_{j,n}:\Omega\to\bbbr$.\\
    Sceptic announces $g_{j,n}:\Omega\to[-\K_{j,n-1},\infty]$
	such that $\int g_{j,n} \dd P_{j,n}=0$.\\
    Reality announces $\omega_{j,n}\in\Omega$.\\
    $\K_{j,n} := \K_{j,n-1} + g_{j,n} (\omega_{j,n})$.\\
    IF Reality wishes:\\
      She sets $N_j:=n$, $\K_j:=\K_{j,n}$.\\
      She terminates the loop over $n$ moving to the next $j$.\\
    END IF\\
  END FOR\\
END FOR

\medskip

\noindent
We define the martingale, submartingale, and supermartingale protocols
as before.
The triangular analogue of Lemma \ref{lem:martingale} is:
\begin{lemma}\label{lem:triangular-martingale}
  In the triangular martingale protocol,
  Sceptic can force
  \begin{equation}\label{eq:tr-basic}
    \limsup_{j\to\infty}
    \sum_{n=1}^{N_j}
    \int
    \xi_{j,n}^2
    \dd P_{j,n}
    <
    \infty
    \Longrightarrow
    \limsup_{j\to\infty}
    \sum_{n=1}^{N_j}
    \xi_{j,n}(\omega_{j,n})
    <
    \infty
  \end{equation}
  (in the sense that he has a strategy guaranteeing $\limsup_{j\to\infty}\K_j=\infty$
  when (\ref{eq:tr-basic}) fails to happen).
\end{lemma}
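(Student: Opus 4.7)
The plan is to mimic the proof of Lemma \ref{lem:martingale}, running the same ``truncated quadratic'' strategy within each stage, starting fresh with capital $1$ at the beginning of each stage. Set
$M_n^{(j)} := \sum_{k=1}^{n}\xi_{j,k}(\omega_{j,k})$ and $Q_n^{(j)}:=\sum_{k=1}^{n}\int\xi_{j,k}^{2}\dd P_{j,k}$.
For each positive integer $C$ I would define a strategy $\SSS_C$ that, within stage $j$, plays $g_{j,n}:=\frac{1}{C}\bigl(2M_{n-1}^{(j)}\xi_{j,n}+\xi_{j,n}^{2}-\int\xi_{j,n}^{2}\dd P_{j,n}\bigr)$ as long as $Q_n^{(j)}\le C$, and $g_{j,n}:=0$ afterwards. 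As in the proof of Lemma \ref{lem:martingale}, the zero-mean property $\int g_{j,n}\dd P_{j,n}=0$ is immediate, and while the strategy is active the capital equals $1+\frac{1}{C}\bigl((M_n^{(j)})^2-Q_n^{(j)}\bigr)$, which is $\ge 0$ because $Q_n^{(j)}\le C$ and $(M_n^{(j)})^2\ge 0$.

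Writing $\tau_j:=N_j\wedge\inf\{n:Q_n^{(j)}>C\}$, the capital of $\SSS_C$ at the end of stage $j$ is therefore
\begin{equation*}
  \K_j^{\SSS_C} \;=\; 1+\frac{1}{C}\Bigl((M_{\tau_j}^{(j)})^{2}-Q_{\tau_j}^{(j)}\Bigr)\;\ge\;0.
\end{equation*}
Next I would mix $\SSS_1,\SSS_2,\ldots$ with positive weights $p_C$ summing to $1$. The triangular analogue of Lemma \ref{lem:mix} follows at once from the additive representation by setting $g_{j,n}:=\sum_C p_C g_{j,n}^{(C)}$; since capital resets to $1$ at the start of each stage for every $\SSS_C$, the resulting master strategy $\SSS$ satisfies $\K_j^{\SSS}=\sum_C p_C \K_j^{\SSS_C}$ at the end of each stage $j$.

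To conclude, suppose (\ref{eq:tr-basic}) fails: $K:=\limsup_j Q_{N_j}^{(j)}<\infty$ while $\limsup_j M_{N_j}^{(j)}=\infty$ (recalling that $\sum_{n=1}^{N_j}\xi_{j,n}(\omega_{j,n})=M_{N_j}^{(j)}$). Pick an integer $C>K$; then $Q_{N_j}^{(j)}\le C$ for all but finitely many $j$, so $\tau_j=N_j$ for those $j$ and
\begin{equation*}
  \K_j^{\SSS_C} \;=\; 1+\frac{1}{C}\Bigl((M_{N_j}^{(j)})^{2}-Q_{N_j}^{(j)}\Bigr)
  \;\ge\; \frac{1}{C}(M_{N_j}^{(j)})^{2}-1.
\end{equation*}
Along a subsequence where $M_{N_j}^{(j)}\to\infty$ this tends to $\infty$, so $\limsup_j \K_j^{\SSS_C}=\infty$ and hence $\limsup_j \K_j^{\SSS}\ge p_C\limsup_j \K_j^{\SSS_C}=\infty$.

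Most of the work is bookkeeping; the only genuinely delicate point is ensuring that $\SSS_C$ never risks bankruptcy \emph{within} a stage (not just at its end), and this is what forces the truncation at $\tau_j$ rather than at $N_j$. Checking that the same choice of $g_{j,n}$ satisfies $\K_{j,n}\ge 0$ pointwise in $\omega$ (not merely at the realized $\omega_{j,n}$) amounts to the inequality $(M_{n-1}^{(j)}+\xi_{j,n}(\omega))^{2}\ge Q_n^{(j)}-C$, which is automatic while $Q_n^{(j)}\le C$. Everything else is parallel to the proof of Lemma \ref{lem:martingale}.
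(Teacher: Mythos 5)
Your proof is correct and follows essentially the same approach as the paper, which proves this lemma with the single sentence that the strategy from Lemma~\ref{lem:martingale} works verbatim within each stage; you have simply written out the details (the explicit move $g_{j,n}$, the within-stage nonnegativity check, the per-stage reset to capital $1$, and the mixture over $C$). There is a small indexing slip in the formula for $\K_j^{\SSS_C}$ (when truncation occurs before $N_j$, the capital freezes at $\tau_j-1$, not $\tau_j$), but this does not affect the argument since the only case actually used in the conclusion is $\tau_j=N_j$.
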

\begin{proof}
  We can use the same strategy for Sceptic
  as in the proof of Lemma \ref{lem:martingale}.
  \ifnotJOURNAL
    \qedtext
  \fi
  \ifJOURNAL
    \qed
  \fi
\end{proof}
In the same way as before we derive the triangular version
of Lemma \ref{lem:submartingale}:
\begin{lemma}\label{lem:triangular-submartingale}
  In the triangular submartingale protocol,
  Sceptic can force
  \begin{equation*}
    \limsup_{j\to\infty}
    \sum_{n=1}^{N_j}
    \left(
      \int
      \xi_{j,n}
      \dd P_{j,n}
      +
      \int
      \xi_{j,n}^2
      \dd P_{j,n}
    \right)
    <
    \infty
    \Longrightarrow
    \limsup_{j\to\infty}
    \sum_{n=1}^{N_j}
    \xi_{j,n}(\omega_{j,n})
    <
    \infty.
  \end{equation*}
\end{lemma}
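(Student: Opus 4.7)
The plan is to mirror the proof of the non-triangular Lemma~\ref{lem:submartingale} verbatim, reducing to the just-established triangular martingale version (Lemma~\ref{lem:triangular-martingale}) by compensation. The key observation is that the submartingale protocol differs from the martingale protocol only in that $\int \xi_{j,n}\,\dd P_{j,n} \ge 0$ rather than $=0$, so subtracting this nonnegative ``drift'' from $\xi_{j,n}$ turns the submartingale increment into a martingale increment without increasing its $L^2$ norm.

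More concretely, I would introduce the centered process
\[
  \tilde{\xi}_{j,n}(\omega) := \xi_{j,n}(\omega) - \int_{\Omega} \xi_{j,n} \,\dd P_{j,n},
\]
which Reality effectively announces whenever she announces $\xi_{j,n}$ (the compensator is measurable with respect to what has already been played, and the Sceptic's strategy is allowed to depend on $P_{j,n}$ and $\xi_{j,n}$). Then $\int \tilde{\xi}_{j,n}\,\dd P_{j,n} = 0$, so the $\tilde{\xi}_{j,n}$ satisfy the hypothesis of the martingale protocol, and Lemma~\ref{lem:triangular-martingale} supplies a strategy $\SSS$ for Sceptic that forces
\[
  \limsup_{j\to\infty}\sum_{n=1}^{N_j}\int \tilde{\xi}_{j,n}^{2}\,\dd P_{j,n} < \infty
  \Longrightarrow
  \limsup_{j\to\infty}\sum_{n=1}^{N_j}\tilde{\xi}_{j,n}(\omega_{j,n}) < \infty.
\]

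To finish, I would verify the two transfers. First, from
\[
  \int \tilde{\xi}_{j,n}^{2}\,\dd P_{j,n} = \int \xi_{j,n}^{2}\,\dd P_{j,n} - \left(\int \xi_{j,n}\,\dd P_{j,n}\right)^{2} \le \int \xi_{j,n}^{2}\,\dd P_{j,n},
\]
the hypothesis of Lemma~\ref{lem:triangular-submartingale} implies the hypothesis of Lemma~\ref{lem:triangular-martingale} applied to $\tilde{\xi}$. Second, the identity
\[
  \sum_{n=1}^{N_j}\xi_{j,n}(\omega_{j,n}) = \sum_{n=1}^{N_j}\tilde{\xi}_{j,n}(\omega_{j,n}) + \sum_{n=1}^{N_j}\int \xi_{j,n}\,\dd P_{j,n}
\]
expresses the left-hand side of the target implication as a sum of two terms whose $\limsup_j$ are bounded: the first by the martingale lemma just invoked, and the second because the $\int \xi_{j,n}\,\dd P_{j,n}$ are nonnegative (submartingale protocol) and their stage sums are bounded by direct assumption. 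So playing $\SSS$ forces the conclusion.

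There is really no obstacle here; the only point requiring any care is checking that the Sceptic's strategy in Lemma~\ref{lem:triangular-martingale}, which was formulated for a fixed announced $\xi_{j,n}$, can be applied with $\xi_{j,n}$ replaced by the centered $\tilde{\xi}_{j,n}$. Since $\tilde{\xi}_{j,n}$ is an explicit measurable function of the already-announced $P_{j,n}$ and $\xi_{j,n}$, Sceptic may compute it and feed it into $\SSS$ himself, so this is harmless. Thus the proof reduces to a one-line citation of Lemma~\ref{lem:triangular-martingale}, exactly as in the non-triangular case.
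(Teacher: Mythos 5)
Your proposal is correct and follows exactly the route the paper takes: the paper states that Lemma~\ref{lem:triangular-submartingale} is derived ``in the same way as before,'' i.e., by the centering argument used for Lemma~\ref{lem:submartingale}, namely applying the (triangular) martingale lemma to $\tilde{\xi}_{j,n}:=\xi_{j,n}-\int\xi_{j,n}\,\dd P_{j,n}$ and noting that the variance does not increase and the compensator sum is bounded by hypothesis. Your extra care in checking that Sceptic can internally substitute the centered increment before invoking the martingale strategy is a point the paper leaves implicit, but it is the right observation and causes no difficulty.
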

As a special case we obtain the triangular Borel--Cantelli--L\'evy lemma:
\begin{lemma}\label{lem:triangular-borel-cantelli}
  In the triangular submartingale protocol,
  Sceptic can force
  \begin{multline*}
    \left(
      \forall j,n:\xi_{j,n}=\III_{E_{j,n}}
      \;\&\;
      \limsup_{j\to\infty}
      \sum_{n=1}^{N_j}
      P_{j,n}(E_{j,n})
      <
      \infty
    \right)\\
    \Longrightarrow
    \left(
      \sup_j
      \left|
        \left\{
          n=1,\ldots,N_j\st\omega_{j,n}\in E_{j,n}
        \right\}
      \right|
      <
      \infty
    \right),
  \end{multline*}
  where $\left|I\right|$ stands for the number of elements in a set $I$
  ($\left|I\right|:=\infty$ if $I$ is infinite).
\end{lemma}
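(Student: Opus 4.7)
The plan is to derive Lemma~\ref{lem:triangular-borel-cantelli} as a direct specialization of Lemma~\ref{lem:triangular-submartingale}, in exact parallel with the proof of Lemma~\ref{lem:borel-cantelli} as an immediate corollary of Lemma~\ref{lem:submartingale} in the non-triangular setting. Concretely, I would invoke Lemma~\ref{lem:triangular-submartingale} with the choice $\xi_{j,n} := \III_{E_{j,n}}$. This choice is admissible in the triangular submartingale protocol: $\III_{E_{j,n}}$ is measurable (since $E_{j,n}$ is a local event), bounded by $1$, and satisfies $\int \xi_{j,n} \dd P_{j,n} = P_{j,n}(E_{j,n}) \ge 0$, which is exactly the submartingale requirement. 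Because $\III_{E_{j,n}}^2 = \III_{E_{j,n}}$, we also have $\int \xi_{j,n}^2 \dd P_{j,n} = P_{j,n}(E_{j,n})$, so the hypothesis of Lemma~\ref{lem:triangular-submartingale} collapses to $\limsup_{j\to\infty} 2\sum_{n=1}^{N_j} P_{j,n}(E_{j,n}) < \infty$, which is (equivalent to) the Borel--Cantelli--L\'evy hypothesis of our lemma. Simultaneously the conclusion $\limsup_{j\to\infty} \sum_{n=1}^{N_j} \xi_{j,n}(\omega_{j,n}) < \infty$ translates directly, via the definition of the indicator, into $\limsup_{j\to\infty} \left|\{n \le N_j : \omega_{j,n} \in E_{j,n}\}\right| < \infty$.

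The only remaining step is to upgrade $\limsup$ to $\sup$, which is where the triangular version has a small wrinkle not present in Lemma~\ref{lem:borel-cantelli}. Set $a_j := |\{n \le N_j : \omega_{j,n} \in E_{j,n}\}|$; since Reality is assumed to terminate each inner loop, $N_j < \infty$ and hence $a_j \in \{0, 1, \ldots, N_j\}$ is a finite nonnegative integer. For any sequence of finite nonnegative integers with $\limsup_j a_j < \infty$, there exist $J_0$ and $C$ such that $a_j \le C$ for all $j \ge J_0$, and therefore $\sup_j a_j \le \max(a_1, \ldots, a_{J_0 - 1}, C) < \infty$. The strategy furnished by Lemma~\ref{lem:triangular-submartingale} is $j$-independent (as required by the triangular framework, and as inherited from the $j$-independent construction in Lemma~\ref{lem:triangular-martingale}), so it serves as the forcing strategy sought in Lemma~\ref{lem:triangular-borel-cantelli}. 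I do not expect any genuine obstacle: the argument is essentially a substitution, with all of the real work absorbed by Lemma~\ref{lem:triangular-submartingale}, plus the elementary $\limsup$-to-$\sup$ upgrade that uses only the standing finiteness of the $N_j$.
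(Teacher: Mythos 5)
Your proof is correct and follows the same route the paper implicitly takes: specialize Lemma~\ref{lem:triangular-submartingale} to $\xi_{j,n}=\III_{E_{j,n}}$, using $\III_{E_{j,n}}^2=\III_{E_{j,n}}$ so that both integrals reduce to $P_{j,n}(E_{j,n})$. The only point you handle explicitly that the paper glosses over is the mismatch between the $\limsup_j$ in the conclusion of Lemma~\ref{lem:triangular-submartingale} and the $\sup_j$ in the conclusion of Lemma~\ref{lem:triangular-borel-cantelli}; your observation that each $a_j:=|\{n\le N_j : \omega_{j,n}\in E_{j,n}\}|\le N_j<\infty$ (because Reality must terminate every inner loop) makes $\limsup_j a_j<\infty$ equivalent to $\sup_j a_j<\infty$ on any play, so the forcing statement with $\limsup$ entails the one with $\sup$; this is a genuine and worthwhile addition, not a different argument.
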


\begin{Proof}{of Theorem \ref{thm:triangular-criterion}, Part \ref{it:tr-c2}}  
  To prove Part \ref{it:tr-c2} of Theorem \ref{thm:triangular-criterion},
  it suffices to establish that Sceptic I can force
  \begin{equation*}
    \limsup_{j\to\infty}
    \sum_{n=1}^{N_j}
    D^{(\alpha)}
    \left(
      P\I_{j,n}\para P\II_{j,n}
    \right)
    <
    \infty
    \Longrightarrow
    \limsup_{j\to\infty}\K\II_j<\infty.
  \end{equation*}
  Instead of (\ref{eq:key}) we now obtain
  \begin{multline*}
    \sum_{n=1}^{N_j}
    \int
    \beta\I_{j,n}
    U
    \left(
      \ln\frac{\beta\I_{j,n}}{\beta\II_{j,n}}
    \right)
    +
    \beta\I_{j,n}
    U^2
    \left(
      \ln\frac{\beta\I_{j,n}}{\beta\II_{j,n}}
    \right)
    \dd Q_{j,n}\\
    \le
    \frac{(B-1)(1+\alpha)}{2}
    \sum_{n=1}^{N_j}
    D^{(\alpha)}
    \left(
      P\I_{j,n}\para P\II_{j,n}
    \right).
  \end{multline*}
  Combining this inequality
  with Lemmas \ref{lem:triangular-submartingale} and \ref{lem:triangular-borel-cantelli}
  (applied to Sceptic I and $E_{j,n}:=\{\beta\I_{j,n}>e\beta\II_{j,n}\}$),
  we reduce our task to proving that Sceptic I can force
  \begin{multline}\label{eq:tr-forcing}
    \left(
      \limsup_{j\to\infty}
      \sum_{n=1}^{N_j}
      U
      \left(
        \ln\frac{\beta\I_{j,n}}{\beta\II_{j,n}}
      \right)
      <
      \infty
      \;\&\;
      \sup_j
      \left|
        \left\{
          n\st\beta\I_{j,n}>e\beta\II_{j,n}
        \right\}
      \right|
      <
      \infty
    \right)\\
    \Longrightarrow
    \limsup_{j\to\infty}\K\II_j<\infty.
  \end{multline}
  Forcing (\ref{eq:tr-forcing}) is achieved [\textbf{this is wrong}] by forcing
  \begin{equation*}
    \limsup_{j\to\infty}
    \sum_{n=1}^{N_j}
    \ln\frac{\beta\I_{j,n}}{\beta\II_{j,n}}
    <
    \infty
    \Longrightarrow
    \limsup_{j\to\infty}\K\II_j<\infty,
  \end{equation*}
  which in turn is achieved with the strategy
  \begin{equation*}
    f\I_{j,n}
    :=
    \frac{\beta\II_{j,n}}{\beta\I_{j,n}}
    f\II_{j,n}.
  \end{equation*}
  This completes the proof of Theorem \ref{thm:triangular-criterion}.
  \ifnotJOURNAL
    \qedtext
  \fi
  \ifJOURNAL
    \qed
  \fi
\end{Proof}

Theorem \ref{thm:triangular-criterion} is a game-theoretic version
of the well-known predictable criteria of contiguity and complete asymptotic separability
(see, e.g., \citealt{jacod/shiryaev:2003-local} and \citealt{greenwood/shiryaev:1985}).
Interestingly, the game-theoretic version is significantly simpler
[because it is wrong].

The triangular counterpart of Theorem \ref{thm:growth} is:
\begin{theorem}\label{thm:triangular-growth}
  Let $c>1$.
  There is a constant $C>0$ depending only on $c$ such that:
  \begin{enumerate}
  \item
    The Sceptics have a joint strategy 
    in the triangular competitive testing protocol
    that guarantees
    \begin{equation*}
      \ln\K\II_j
      \ge
      \sum_{n=1}^{N_j}
      D^{(-1)}
      \left(
        P\I_n\para P\II_n
      \right)
      -
      C
      \sqrt{\frac{N_j}{\ln\ln N_j}}
      \left(
        \ln\K\I_j
        +
        \ln\ln N_j
      \right)
    \end{equation*}
    for all $j$
    on the plays where Forecaster II is $c$-timid
    and Reality makes $N_j\ge3$ for all $j$.
  \item
    Sceptic I has a strategy that guarantees
    \begin{equation*}
      \ln\K\II_j
      \le
      \sum_{n=1}^{N_j}
      D^{(-1)}
      \left(
        P\I_n\para P\II_n
      \right)
      +
      C
      \sqrt{\frac{N_j}{\ln\ln N_j}}
      \left(
        \ln\K\I_j
        +
        \ln\ln N_j
      \right)
    \end{equation*}
    for all $j$
    on the plays where Forecaster II is $c$-timid
    and Reality makes $N_j\ge3$ for all $j$.
  \end{enumerate}
\end{theorem}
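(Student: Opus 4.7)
The plan is to apply the proof of Theorem \ref{thm:growth} stage by stage within the triangular competitive testing protocol. Two observations make this transfer straightforward: (i) the basic strategies (\ref{eq:sceptic-1}) and (\ref{eq:sceptic-2}) in Theorem \ref{thm:non-asymptotic} depend only on the current step's Forecaster moves and so are well-defined uniformly in $j$; (ii) within each stage $j$ the capitals reset to $1$, so the stage is isomorphic to a fresh instance of the ordinary testing protocol of length $N_j$, and all non-asymptotic inequalities from Section \ref{sec:non-asymptotic} apply intra-stage. In particular, Lemma \ref{lem:mix} still holds when applied separately within each stage.

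For Part 1, both Sceptics use the construction from the proof of Part \ref{it:growth1} of Theorem \ref{thm:growth}: for each $k\ge 2$ let $\SSS_{I,k}$ and $\SSS_{II,k}$ be the strategies (\ref{eq:sceptic-1})--(\ref{eq:sceptic-2}) with $\alpha:=-1+2\epsilon_k$, and mix with weights $p_k\propto k^{-2}$ via Lemma \ref{lem:mix} restarted at the beginning of each stage. Since $\K\II_j\ge p_k\K^{\SSS_{II,k}}_{j,N_j}$ and $\K^{\SSS_{I,k}}_{j,N_j}\le\K\I_j/p_k$, the computation leading from (\ref{eq:growth-intermediate-2}) to (\ref{eq:growth-intermediate-3}) applied within stage $j$ gives, for every $k\ge 2$,
\begin{equation*}
  \ln\K\II_j + 2\ln k
  \ge
  \sum_{n=1}^{N_j} D^{(-1)}\!\left(P\I_{j,n}\para P\II_{j,n}\right)
  - C_1 N_j\epsilon_k
  - \frac{1-\epsilon_k}{\epsilon_k}\left(\ln\K\I_j + 2\ln k\right).
\end{equation*}
Choosing, for each individual stage, $k:=\lceil\ln N_j\rceil$ and $\epsilon_k:=\sqrt{\ln k/e^k}$ as in (\ref{eq:setting}) (which is an analysis step, not part of the strategy definition), and using $\epsilon_k\asymp\sqrt{\ln\ln N_j/N_j}$, yields the desired bound with a constant $C$ depending only on $c$ (through the bound on $\beta\II_{j,n}/\beta\I_{j,n}$ used to absorb $c^{\epsilon_k}\ln^2 c$).

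For Part 2, Sceptic I alone plays, within each stage, a mixture with weights $p_k\propto k^{-2}$ of the strategies from Part \ref{it:na2} of Theorem \ref{thm:non-asymptotic} corresponding to $\alpha:=-1-2\epsilon_k$; then $\ln\K^{\SSS_{I,k}}_{j,N_j}\le\ln\K\I_j+2\ln k$ and the derivation from the proof of Part \ref{it:growth2} of Theorem \ref{thm:growth} (Taylor-expanding $(\beta\I_n/\beta\II_n)^{\epsilon_k}$, using $c$-timidity to control the remainder, and setting $k$ and $\epsilon_k$ as above) yields the bound (with $N$ replaced by $N_j$) uniformly in $j$.

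I do not expect any substantive obstacle. The only thing to check carefully is that Lemma \ref{lem:mix} still delivers $\K^{\SSS}_{j,N_j}=\sum_k p_k\K^{\SSS_k}_{j,N_j}$ when applied independently within each stage; this follows because each stage, viewed in isolation, satisfies the hypotheses of Lemma \ref{lem:mix} (additive representation, initial capital $1$), and the stage-independent definition of $\SSS_k$ ensures that a single stage-independent mixed strategy $\SSS$ works for all $j$.
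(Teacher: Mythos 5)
Your proposal is correct and takes exactly the approach the paper itself indicates: the paper declines to give a proof of this theorem, saying only that it ``would be completely analogous to the proof of Theorem~\ref{thm:growth},'' and your stage-by-stage transfer (exploiting that each stage resets $\K_{j,0}=1$ and is isomorphic to a fresh instance of the ordinary testing protocol, that the strategies (\ref{eq:sceptic-1})--(\ref{eq:sceptic-2}) are stage-independent, and that Lemma~\ref{lem:mix} applies within each stage) is precisely that analogy made explicit. One small point worth watching: the analysis of Theorem~\ref{thm:growth} naturally produces $\ln^{+}\K\I$ rather than $\ln\K\I$ (the weakening $-\frac{1-\epsilon_k}{\epsilon_k}\ln\K\I \ge -\frac{1}{\epsilon_k}\ln\K\I$ requires $\K\I\ge1$, and when $\K\I<1$ one instead drops the term to get $\ln^{+}$), so carrying your argument through literally yields the theorem with $\ln^{+}\K\I_j$ in place of $\ln\K\I_j$; the $\ln$ in the statement as printed appears to be a slip relative to Theorem~\ref{thm:growth}, not a gap in your argument.
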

We refrain from giving a detailed proof,
which would be completely analogous to the proof of Theorem \ref{thm:growth}.
\blueend\fi

\ifFULL\bluebegin
\section{Criteria of contiguity and complete asymptotic separability}
\label{sec:liptser}

In this section we consider
the following measure-theoretic counterpart of the triangular competitive testing protocol:
we have two sequences of probability measures,
$\Prob\I_j$ and $\Prob\II_j$,
$j=1,2,\ldots$,
on the measurable space $\Omega^{\infty}$
and a sequence of finite stopping times $N_j$, $j=1,2,\ldots$,
on $\Omega^{\infty}$ equipped with the filtration $(\FFF_n)_{n=0}^{\infty}$
(as defined in Sect.~\ref{sec:kabanov}).
As before,
let $P\I_n$ (resp.\ $P\II_n$) be a regular conditional distribution
of $\omega_n$ given $\omega_1\ldots\omega_{n-1}$
w.r.\ to the probability measure $\Prob\I$ (resp.\ $\Prob\II$);
Forecaster I is playing $P\I_n$ and Forecaster II is playing $P\II_n$.

The following is a special case of the finitary version of Ville's theorem,
which is proved in, e.g., \citet{shafer/vovk:2001}, Proposition 8.13.
\begin{proposition}\label{prop:Ville-positive}
  If $j\in\{1,2,\ldots\}$, $E\in\FFF_{N_j}$,
  and $\Prob$ is either $\Prob\I_j$ or $\Prob\II_j$,
  then
  \begin{equation*}
    \Prob(E)
    =
    \inf
    \left\{
      L_0
      \st
      L_{N_j}\ge1
    \right\},
  \end{equation*}
  $L$ ranging over non-negative martingales with $L_0$ a constant.
\end{proposition}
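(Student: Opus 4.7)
The plan is to prove the two inequalities $\Prob(E)\le\inf L_0$ and $\Prob(E)\ge\inf L_0$ separately. Throughout, I read the condition ``$L_{N_j}\ge1$'' as $L_{N_j}\ge\III_E$ (equivalently, $L_{N_j}\ge1$ on $E$), since $L_{N_j}\ge1$ everywhere would trivially force $L_0\ge1$ and make the infimum equal $1$ rather than $\Prob(E)$.

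For the first inequality, I would fix any admissible non-negative martingale $L$ and show $L_0\ge\Prob(E)$. For each finite $n$, $N_j\wedge n$ is a bounded stopping time, so the optional stopping theorem gives $\Expect L_{N_j\wedge n}=L_0$. Letting $n\to\infty$ and using that $N_j<\infty$ almost surely together with $L\ge0$, Fatou's lemma yields $\Expect L_{N_j}\le L_0$. Combining this with $L_{N_j}\ge\III_E$ gives $L_0\ge\Expect\III_E=\Prob(E)$, as desired.

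For the reverse inequality, I would exhibit an explicit admissible martingale attaining $L_0=\Prob(E)$, namely Doob's martingale $L_n:=\Expect[\III_E\mid\FFF_n]$. By construction $L\ge0$ and $L_0=\Prob(E)$ (since $\FFF_0$ is trivial). Since $E\in\FFF_{N_j}$, on each event $\{N_j=k\}$ the set $E$ is $\FFF_k$-measurable, so $\Expect[\III_E\mid\FFF_k]=\III_E$ there; therefore $L_{N_j}=\III_E$ pointwise, and in particular $L_{N_j}\ge1$ on $E$. This choice of $L$ witnesses $\inf L_0\le\Prob(E)$ and completes the proof.

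The argument is essentially routine; the only mildly delicate step is the passage from the bounded stopping times $N_j\wedge n$ to $N_j$ itself via Fatou, which requires $N_j$ to be finite almost surely (as assumed) but not necessarily bounded. There is no real obstacle beyond this standard approximation.
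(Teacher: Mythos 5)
Your proof is correct. The paper itself gives no proof of this proposition: it states it as a special case of the finitary version of Ville's theorem and refers the reader to Shafer and Vovk (2001), Proposition 8.13, so there is no in-paper argument to compare against. Your reading of the abbreviated condition ``$L_{N_j}\ge 1$'' as $L_{N_j}\ge\III_E$ is the intended one. The argument you give is the standard measure-theoretic proof: the inequality $\Prob(E)\le L_0$ follows from optional stopping at the bounded times $N_j\wedge n$ together with Fatou's lemma, which is exactly the right device for an a.s.\ finite but possibly unbounded stopping time; the reverse inequality is witnessed by the Doob martingale $L_n:=\Expect[\III_E\mid\FFF_n]$, for which $L_0=\Prob(E)$ (since $\FFF_0$ is trivial) and $L_{N_j}=\III_E$. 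Two small remarks: the identity $L_{N_j}=\III_E$ holds $\Prob$-almost surely rather than pointwise, since conditional expectations are defined only up to null sets (this does not affect the infimum); and the construction via conditional expectation is intrinsically measure-theoretic, so the paper's follow-up remark that the proposition ``remains true for game-theoretic martingales'' would require a separate argument not covered by your proof.
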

The proposition remains true for game-theoretic martingales.

The following triangular counterparts of the notions of absolute continuity and singularity
were introduced by LeCam in 1960;
we will only be interested in the restriction of $\Prob\I_j$ and $\Prob\II_j$
on $\FFF_{N_j}$.
We will say that the sequence $\Prob\I_j$ is \emph{contiguous} w.r.\ to $\Prob\II_j$
and write $(\Prob\I_j)\contig(\Prob\II_j)$
if, for every sequence $E_j\in\FFF_{N_j}$,
\begin{equation*}
  \lim_{j\to\infty}\Prob\II_j(E_j)=0
  \Longrightarrow
  \lim_{j\to\infty}\Prob\I_j(E_j)=0.
\end{equation*}
We say that the sequence $\Prob\I_j$
is (\emph{completely asymptotically}) \emph{separable} from $\Prob\II_j$
and write $(\Prob\I_j)\separ(\Prob\II_j)$
if there exists a sequence $E_j\in\FFF_{N_j}$ such that
\begin{equation*}
  \lim_{j\to\infty}\Prob\I_j(E_j)=0
  \;\&\;
  \lim_{j\to\infty}\Prob\II_j(E_j)=1.
\end{equation*}

The following is the triangular version of Corollary \ref{cor:ac-s};
it is a essentially special case of well-known results
(see, e.g., \citealt{jacod/shiryaev:2003-local} and \citealt{greenwood/shiryaev:1985}).
Our statement is somewhat sloppy;
a rigorous statement would involve another condition
((3) in \citealt{jacod/shiryaev:2003-local}, Theorem 2.27).
\begin{corollary}\label{cor:c-s}
  In the measure-theoretic competitive testing protocol:
  \begin{enumerate}
  \item\label{it:c-s-1}
    For any $\alpha\in(-1,1)$,
    $(\Prob\I_j)\contig(\Prob\II_j)$
    if and only if (\ref{eq:tr-close}) holds in $\Prob\I$-probability.
  \item\label{it:c-s-2}
    For any $\alpha\in(-1,1)$,
    $(\Prob\I_j)\separ(\Prob\II_j)$
    if and only if (\ref{eq:tr-far}) holds in $\Prob\I$-probability.
  \end{enumerate}
\end{corollary}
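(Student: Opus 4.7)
The plan is to imitate the proof of Corollary \ref{cor:ac-s} with the triangular ingredients: Proposition \ref{prop:Ville-positive} (finitary Ville's theorem) takes the place of Proposition \ref{prop:Ville-zero}, and Theorem \ref{thm:triangular-criterion} takes the place of Theorem \ref{thm:criterion}. The qualitative correspondence ``Sceptic becomes infinitely rich on an event of null probability'' from Corollary \ref{cor:ac-s} is replaced by the quantitative translation: for each sequence $(E_j)$ with $\Prob(E_j)\to 0$, Proposition \ref{prop:Ville-positive} delivers a stage-indexed game-theoretic martingale reaching at least $1/(2\Prob(E_j))\to\infty$ on $E_j$; conversely, by Markov's inequality, any Sceptic I capital $\K\I_j$ (respectively Sceptic II capital $\K\II_j$) is tight under $\Prob\I_j$ (respectively under $\Prob\II_j$), which plays the role of the condition ``$\limsup\K\I_\infty<\infty$'' in the proof of Corollary \ref{cor:ac-s}.

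I would begin with the ``only if'' direction of Part \ref{it:c-s-1}, which is the cleanest. Assuming $(\Prob\I_j)\contig(\Prob\II_j)$, apply Theorem \ref{thm:non-asymptotic} Part \ref{it:na1} stagewise to obtain strategies for both Sceptics such that, at the end of each stage,
\[
  \frac{2}{1+\alpha}\ln\K\I_j + \frac{2}{1-\alpha}\ln\K\II_j \;\ge\; S_j,
\]
where $S_j:=\sum_{n=1}^{N_j}D^{(\alpha)}(P\I_{j,n}\para P\II_{j,n})$ and the inequality uses (\ref{eq:inequalities}). Markov gives $\Prob\I_j(\K\I_j\ge T)\le 1/T$ and $\Prob\II_j(\K\II_j\ge T)\le 1/T$, so $\K\I_j$ is tight under $\Prob\I_j$ and $\K\II_j$ is tight under $\Prob\II_j$. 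Applying contiguity to events $\{\K\II_j\ge T_j\}$ with $T_j\to\infty$ transfers tightness of $\K\II_j$ from $\Prob\II_j$ to $\Prob\I_j$. Since both coefficients $2/(1\pm\alpha)$ are positive, the left-hand side above is tight from above under $\Prob\I_j$; hence $S_j$ is tight, which is exactly (\ref{eq:tr-close}) holding in $\Prob\I$-probability.

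For the ``if'' direction of Part \ref{it:c-s-1}, assume $(S_j)$ tight under $\Prob\I$ and let $\Prob\II_j(E_j)\to 0$. Proposition \ref{prop:Ville-positive} supplies for each $j$ a $\Prob\II_j$-martingale $M^{(j)}$ starting at $1$ with $M^{(j)}_{N_j}\ge 1/(2\Prob\II_j(E_j))$ on $E_j$, which Sceptic II implements as a stagewise strategy. Sceptic I plays the half-and-half mixture, via Lemma \ref{lem:mix}, of the strategy from Theorem \ref{thm:triangular-criterion} Part \ref{it:tr-c2} and the Sceptic-I component of the joint strategy from Theorem \ref{thm:triangular-criterion} Part \ref{it:tr-c1}. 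The argument in the proof of Corollary \ref{cor:ac-s} then adapts: on $E_j\cap\{S_j\le M\}$ Sceptic I's capital is forced to diverge (a bounded divergence sum plus $\K\II_j\to\infty$ drags $\K\I_j\to\infty$), so $\K\I_j\ge T_j\to\infty$ on this set, and Markov under $\Prob\I_j$ yields $\Prob\I_j(E_j\cap\{S_j\le M\})\to 0$; combined with tightness of $S_j$ this forces $\Prob\I_j(E_j)\to 0$. Part \ref{it:c-s-2} is proved analogously, with the two parts of Theorem \ref{thm:triangular-criterion} swapped to match the singularity/separability roles.

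The main obstacle is the tension between the pathwise, limsup-type conclusions of Theorem \ref{thm:triangular-criterion} and the in-probability assertions of Corollary \ref{cor:c-s}. Extracting stage-by-stage quantitative control of Sceptic I's capital from the qualitative Theorem \ref{thm:triangular-criterion} Part \ref{it:tr-c2} requires either a subsequence-and-diagonal argument extracting an almost-sure statement from in-probability convergence, or a careful quantitative reprise paralleling Theorem \ref{thm:non-asymptotic} Part \ref{it:na2}; the latter is quantitative but valid only for $\alpha<-1$, so a comparison between $D^{(\alpha)}$ for $\alpha\in(-1,1)$ in the hypothesis and $D^{[\alpha']}$ for $\alpha'<-1$ would need to be inserted. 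This is the step most likely to require additional bookkeeping (and matches the paper's own admission that the corollary as stated is ``somewhat sloppy'').
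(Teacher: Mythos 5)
The paper's own ``proof'' of Corollary~\ref{cor:c-s} is a single sentence (``analogous to Corollary~\ref{cor:ac-s}''), so you are attempting to supply substance that the paper itself never provides; and your ``only if'' argument for Part~\ref{it:c-s-1} is, if anything, cleaner than the paper's template, since you invoke Theorem~\ref{thm:non-asymptotic} Part~\ref{it:na1} directly and get quantitative control of $S_j$ via Markov's inequality and the transfer of tightness under contiguity. But your ``if'' direction has a concrete structural error. In the proof of Corollary~\ref{cor:ac-s} the second strategy in Sceptic~I's mixture is \emph{not} the Sceptic-I component of the joint strategy from Part~1 of Theorem~\ref{thm:criterion}; it is an independent strategy that makes Sceptic~I rich precisely when (\ref{eq:close}) fails, which exists because ``(\ref{eq:close}) fails'' is a $\Prob\I$-null event and one can appeal to Proposition~\ref{prop:Ville-zero}. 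In the triangular setting the hypothesis is only that $(S_j)$ is tight under $\Prob\I$, which is \emph{not} a pathwise-null event on $\Omega^\infty$, so there is no analogue of that second strategy. Plugging in the Sceptic-I component of the Part~\ref{it:tr-c1} joint strategy does not substitute for it, because that component is only guaranteed to be useful when Sceptic~II plays the matching Part~\ref{it:tr-c1} component, whereas in your setup Sceptic~II is instead playing the Ville martingales for the events $E_j$. This hole is exactly the ``main obstacle'' you name, and neither of your proposed patches closes it: a subsequence/diagonal argument produces an almost-sure statement along a subsequence, not the uniform-in-$j$ stage-by-stage bound the Markov step needs; and there is no uniform comparison between $D^{(\alpha)}$ for $\alpha\in(-1,1)$ and $D^{[\alpha']}$ for $\alpha'<-1$ (the monotonicity lemma in Sect.~\ref{sec:criterion-proof} only compares $\alpha$-divergences within $(-1,1)$).

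You should also be aware that the draft itself marks the underlying ingredient as defective: the proof of Theorem~\ref{thm:triangular-criterion} Part~\ref{it:tr-c2} contains a step the author flags as wrong (passing from forcing (\ref{eq:tr-forcing}) to forcing its untruncated version fails in the triangular setting, because the Borel--Cantelli argument that the truncation matters only finitely often has no stagewise counterpart), and the author explicitly notes that the statement of Corollary~\ref{cor:c-s} is ``somewhat sloppy'' and that a rigorous version needs an additional condition (condition~(3) of Theorem~2.27 in Jacod--Shiryaev). So even a perfectly executed ``analogy with Corollary~\ref{cor:ac-s}'' would inherit a false premise. The honest conclusion is that your Part~\ref{it:c-s-1} ``only if'' direction is sound, your Part~\ref{it:c-s-1} ``if'' direction and both directions of Part~\ref{it:c-s-2} rely on Theorem~\ref{thm:triangular-criterion} Part~\ref{it:tr-c2}, which is not established, and the mixture you propose for Sceptic~I in the ``if'' direction does not match the structure that makes the non-triangular proof work.
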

\begin{proof}
  The proof is analogous to that of Corollary \ref{cor:ac-s}.
  \ifnotJOURNAL
    \qedtext
  \fi
  \ifJOURNAL
    \qed
  \fi
\end{proof}
\blueend\fi

\ifnotJOURNAL
\subsection*{Acknowledgments}
\fi
\ifJOURNAL
  \begin{acknowledgements}
\fi
I am grateful to Akio Fujiwara and Phil Dawid for encouragement and useful discussions.
\ifJOURNAL
  \end{acknowledgements}
\fi

\ifJOURNAL
  \input{journal.txt}
\fi
\ifarXiv

\fi

\end{document}